\newcommand{\compactlist}[1]{\setlength{\itemsep}{0pt} \setlength{\parskip}{0pt} \setlength{\leftskip}{-0.#1em}}
\numberwithin{equation}{section}
\theoremstyle{plain}
\newtheorem{theorem}{Theorem}[section]
\newtheorem{prop}[theorem]{Proposition}
\newtheorem{lemma}[theorem]{Lemma}
\newtheorem{lem}[theorem]{Lemma}
\newtheorem{corollary}[theorem]{Corollary}
\newtheorem{cor}[theorem]{Corollary}
\theoremstyle{definition}
\newtheorem{definition}[theorem]{Definition}
\newtheorem{dfn}[theorem]{Definition}
\newtheorem{example}[theorem]{Example}
\newtheorem{rem}[theorem]{Remark}
\newcommand{\ahha}{{\scriptscriptstyle{A}}}
\newcommand{\ikks}{{\scriptscriptstyle{X}}}
 \newcommand{\N}{{\mathbb{N}}}
 \newcommand{\K}{{\mathbb{K}}}
 \newcommand{\Z}{{\mathbb{Z}}}
\newcommand{\frkg}{{\mathfrak g}}
\newcommand{\gd}{\delta}
\newcommand{\gvf}{\varphi}
\newcommand{\gO}{\Omega}
\newcommand{\gs}{\sigma} 
\newcommand{\gS}{\Sigma}
\newcommand{\cG}{{\mathcal G}}
\newcommand{\cI}{{\mathcal I}}
\newcommand{\cL}{{\mathcal L}}
\newcommand{\cM}{{\mathcal M}}
\newcommand{\cN}{{\mathcal N}}
\newcommand{\cO}{{\mathcal O}}
\newcommand{\cS}{{\mathcal S}}
\newcommand{\cT}{{\mathcal T}}
\newcommand{\cX}{{\mathcal X}}
\newcommand{\End}{\operatorname{End}}
\newcommand{\Hom}{\operatorname{Hom}}
\newcommand{\Tor}{{\rm Tor}}
\newcommand{\Ext}{{\rm Ext}}
\newcommand{\id}{{\rm id}}
\newcommand{\g}{{\frkg}}                                                        
\newcommand{\pl}{\partial}
\newcommand{\rmref}[1]{{\rm (}\ref{#1}{\rm )}}
\newcommand{{\Hl}}{{H^{\ell}}} 
\newcommand{{\mHop}}{{m_{H^{\rm op}}}} 
\newcommand{{\Hop}}{{H^{\rm op}}} 
\newcommand{{\mUop}}{{m_{U^{\rm op}}}} 
\newcommand{{\mUopp}}{{m_{\scriptscriptstyle{U^{\rm op}}}}} 
\newcommand{{\Uop}}{{U^{\rm op}}}
\newcommand{{\mVop}}{{m_{V^{\rm op}}}} 
\newcommand{{\Vop}}{{V^{\rm op}}}  
\newcommand{{\Ae}}{{A^{\rm e}}}
\newcommand{{\Be}}{{B^{\rm e}}}
\newcommand{{\Ue}}{{U^{\rm e}}}
\newcommand{{\He}}{{H^{\rm e}}}
\newcommand{{\Aop}}{{A^{\rm op}}}
\newcommand{{\Aope}}{({A^{\rm op}})^{\rm e}}
\newcommand{{\Aopl}}{{A^{\rm op}_\pl}}
\newcommand{{\Bop}}{{B^{\rm op}}}
\newcommand{{\Bopp}}{{\scriptscriptstyle{{B^{\rm op}}}}}
\newcommand{{\Bope}}{({B^{\rm op}})^{\rm e}}
\newcommand{{\Bpl}}{{B_\pl}}
\newcommand{{\op}}{{{\rm op}}}
\newcommand{{\coop}}{{{\rm coop}}}
\newcommand{{\sop}}{{*^{\rm op}}}
\newcommand{{\co}}{{{\rm co}}}
\newcommand{\kmod}{\K\mbox{-}\mathbf{Mod}}                     %
\newcommand{{\gog}}{{G \rightrightarrows G_0}}
\newcommand{{\rra}}{\rightrightarrows}
\newcommand{{\lra}}{\ \longrightarrow \ }
\newcommand{{\lla}}{\ \longleftarrow \ }
\newcommand{{\lma}}{\ \longmapsto \ }
\newcommand{{\bull}}{{\scriptscriptstyle{\bullet}}}
\newcommand{{\qqquad}}{{\quad\quad\quad}}
\newcommand{\Aopp}{{\scriptscriptstyle{\Aop}}}
\newcommand{\Aee}{{\scriptscriptstyle{\Ae}}}
\newsavebox{\foobox}
\newcommand{\Bc}{\mbox{\selectlanguage{russian}B}}
\newcommand{\bc}{\mbox{\selectlanguage{russian}b}}
\begin{document}

\title{Higher brackets on cyclic and negative cyclic (co)homology}

\author{Domenico Fiorenza}
\author{Niels Kowalzig}

\begin{abstract}
The purpose of this article is to embed the string topology bracket developed by Chas-Sullivan and Menichi on negative cyclic cohomology groups as well as the dual bracket found by de Thanhoffer de V\"olcsey-Van den Bergh on negative cyclic homology groups into the global picture of a noncommutative differential (or Cartan) calculus up to homotopy on the (co)cyclic bicomplex in general, in case a certain Poincar\'e duality is given. For negative cyclic cohomology, this in particular leads to a  Batalin-Vilkoviski\u\i\ algebra structure on the  underlying Hochschild cohomology. 
In the special case in which this BV bracket vanishes, one obtains an $e_3$-algebra structure on Hochschild cohomology.
%
%
The results are given in the general and unifying setting of (opposite) cyclic modules over (cyclic) operads.
%
\end{abstract}

\address{Dipartimento di Matematica, Universit\`a degli Studi di Roma La
Sapienza, P.le Aldo Moro 5, 00185 Roma, Italia}

\email{fiorenza@mat.uniroma1.it}
\email{kowalzig@mat.uniroma1.it}

\keywords{Higher brackets, Batalin-Vilkoviski\u\i\ algebras, (cyclic) operads, homotopy calculi, Poincar\'e duality}

\subjclass[2010]{
{18D50, 16E40, 19D55, 16E45, 16T05.}
}

\maketitle

\setcounter{tocdepth}{1}
\tableofcontents

\section{Introduction}
\subsection{Aims and objectives}
%
%
%
%
%

It is a classical result that the Hochschild cohomology $HH^\bullet(A)$ of an associative (unital) $\K$-algebra $A$ carries a natural Gerstenhaber algebra structure \cite{Ger:TCSOAAR}. Namely, one has an associative and graded commutative product $\smallsmile \colon HH^p(A)\otimes HH^q(A)\to HH^{p+q}(A)$ and a degree $-1$ Lie bracket $\{\cdot,\cdot\}\colon HH^p(A)\otimes HH^q(A)\to HH^{p+q-1}(A)$, which are compatible in the sense that the degree $-1$ Poisson algebra identity holds. 

Structures of this kind do not only exist on Hochschild cohomology for associative algebras but on quite a large class of cohomology groups which can be described as an $\Ext$-group, as ist the case for $HH^\bullet(A) = \Ext_\Ae^\bullet(A,A)$ if $A$ is $\K$-projective. For example, if $U$ is a $\K$-bialgebra, then $\Ext_U^\bullet(\K,\K)$ is a Gerstenhaber algebra again \cite{Men:CMCMIALAM}, and this is even the case for $\Ext_U^\bullet(A,A)$, when $U$ is a bialgebroid over a possibly noncommutative algebra $A$, see \cite[\S3.6]{KowKra:BVSOEAT}. 

Moreover, if the bialgebra is a Hopf algebra with involutive antipode, then $\Ext_U^\bullet(\K,\K)$ is even a Batalin-Vilkoviski\u\i\ (BV) algebra \cite{Men:CMCMIALAM}, that is, a Gerstenhaber algebra the bracket of which measures the failure of the cyclic coboundary $B$ (which will be called $\Bc$ in the main text to better distinguish between homology and cohomology) to be a derivation of the cup product, that is
\begin{equation}
\label{avvisoagliutenti}
 \{f, g\} =(-1)^{f}B  (f \smallsmile g) -(-1)^{f} (B  f \smallsmile g)   -(f\smallsmile B  g).
\end{equation}
An analogous statement for (left) Hopf algebroids $(U, A)$, that is, when $\Ext_U(A,A)$ happens to be a Batalin-Vilkoviski\u\i\ algebra appears to be more involved \cite{Kow:BVASOCAPB}; this includes the case for Hochschild cohomology as $(\Ae, A)$ can be seen as a Hopf algebroid (but not as a Hopf algebra). It is known that the Gerstenhaber structure on Hochschild cohomology does not always extend (or rather restrict) to a Batalin-Vilkoviski\u\i\ algebra structure but only in certain cases as, for example, symmetric algebras, certain Frobenius algebras, as well as (twisted) Calabi-Yau algebras \cite{Tra:TBVAOHCIBIIP, LamZhoZim:THCROAFAWSSNAIABVA, Gin:CYA, KowKra:BVSOEAT}; a sufficiency criterion when this is the case can be found in \cite{Kow:BVASOCAPB}.

On the other hand, if $U$ is a {\em braided} (in the sense of quasi-triangular) bialgebra over $\K$, then the Gerstenhaber bracket on $\Ext_U^\bullet(\K,\K)$ turns out to be zero \cite[Rem.~5.4]{Tai:IHBCOIDHAAGCOTYP}. At least for a cocommutative Hopf algebra, this can be directly seen from Eq.~\rmref{avvisoagliutenti} together with the fact that in this case one obtains $B=0$, as mentioned in \cite[p.~321]{Men:CMCMIALAM} again. In general, the 
possible cohomological vanishing of the Gerstenhaber bracket
can be considered a consequence from the fact that the binary operation up to homotopy on the cochain complex inducing it, is homotopic to zero. This homotopy induces in turn a degree $-2$ Lie bracket on cohomology, which together with the cup product makes it  
an $e_3$-algebra. The existence of an $e_3$-algebra structure on the cohomology of a braided Hopf algebra (or rather bialgebra) has been conjectured in \cite[Conj.~25]{Men:CMCMIALAM}, inspired by a conjecture attributed to Kontsevich in \cite{Sho:TOABFATMC} that this is the case for the Gerstenhaber-Schack cohomology of a Hopf algebra. This in turn amounts to an $\Ext$-group in the category of tetramodules (or Hopf bimodules) \cite[Cor.~3.9]{Tai:IHBCOIDHAAGCOTYP}, or, in the finite dimensional case, equivalently to an $\Ext$-group over the Drinfel'd double as a braided bialgebra. In \cite{Sho:TOABFATMC, Sho:DGCADC}, Shoikhet developed an approach based on $n$-fold monoidal abelian categories that implies that the Gerstenhaber-Schack cohomology groups indeed do carry the structure of an $e_3$-algebra. 
More recently, Ginot and Yalin \cite{GinYal:DTOBHHCAF} showed how to obtain this from
the $E_3$-algebra structure on the higher Hochschild complex of
a $E_2$-algebras given by the solution to the higher Deligne conjecture. This indeed implies the existence of an $E_3$-structure on the deformation complex of a dg bialgebra, but an explicit expression for the degree $-2$ Lie bracket remains elusive.

There is, however, quite an explicit degree $-2$ Lie bracket on the negative cyclic cohomology $HC_-^\bullet(A)$ of, for example, a symmetric algebra $A$, obtained by Menichi \cite{Men:BVAACCOHA} generalising the construction of Chas-Sullivan's string topology bracket \cite{ChaSul:ST}. 


Since negative cyclic cohomology is related to Hochschild cohomology by a long exact sequence (that is, a version of the so-called $SBI$ sequence), it is therefore tempting to presume that a possible braiding implies suitable vanishings 
such that the Chas-Sullivan-Menichi bracket can be transferred to the Hochschild cohomology,
inducing the Lie bracket of the $e_3$-algebra structure.
\par
This motivated us to a deeper investigation of Menichi's construction, leading to the generalisation described in the present article. Namely, Menichi obtains his degree $-2$ bracket on $HC_-^\bullet(A)$ as a particular case of a more general construction, where the endomorphism operad $\mathrm{Hom}_\K(A^{\otimes \bullet},A)$ is replaced by an arbitrary cyclic operad $\cO$ with multiplication in the category of $\K$-modules. Here we remark how this is, in turn, best seen as a particular instance of a construction of a degree $-d-2$ Lie bracket on the negative cyclic cohomology of a cyclic $\K$-module $\cM$ over $\cO$ endowed with a distinguished degree $d$ cocycle inducing an isomorphism $H^\bullet(\cO)\xrightarrow{\sim} H^{\bullet+d}(\cM)$. 
The Menichi bracket then corresponds to the special case of the cyclic operad $\cO$ considered as a module over itself, with distinguished degree zero cocycle given by the unit element. 
\par
The degree $-d-2$ Lie bracket on $HC_-^\bullet(\cM)$ is obtained as a consequence of the existence of a noncommutative (or Cartan-Gerstenhaber) differential calculus up to homotopy on the cocyclic bicomplex 
of $\cM$. We briefly remind the reader here that by this we essentially mean the pair of 
a Gerstenhaber algebra $(\mathcal{G}, \gd, \smallsmile, \{.,.\})$ up to homotopy
and a mixed complex $(M, b, B)$ such that $M$ is both a module and a Lie module over the dg-Lie algebra $\mathcal{G}[1]$
 by means of a contraction $\iota$ resp.\ a Lie derivative $\cL$, which obey a certain Cartan homotopy generalising the known one from differential geometry, see \S\ref{cartan} for all details. Since we explicitly need to remain on the level of (co)chains instead of descending to (co)homology, various homotopies $\cS$ and $\cT$ come into play. This has been more or less explicitly outlined in \cite{GelDalTsy:OAVONCDG}, where the case of Hochschild complex of an associative superalgebra is considered. As soon as a special (Poincar\'e) duality element is given, one has a natural Lie bracket on the cyclic cohomology of $M$, generalising the de Thanhoffer de V\"olcsey-Van den Bergh bracket, as well as a Lie bracket on the shifted negative cyclic cohomology of $M$, generalising the Chas-Sullivan-Menichi bracket. Moreover, the shifted cohomology $H^{\bullet-1}(M)$ of $M$ carries a Batalin-Vilkoviski\u\i\ algebra structure and, 
if the Gerstenhaber bracket vanishes, even 
an $e_3$-algebra structure.

The existence of 
a noncommutative differential calculus up to homotopy 
on an operadic (bi)module $\cM$ over an operad $\cO$ (in the category of $\K$-modules) is not direct, but rather a corollary of the fact that the $\K$-linear dual $\cM^*$ carries a natural structure of a so-called {\em cyclic opposite module} over the operad $\cO$ in a sense described in \cite[\S3]{Kow:GABVSOMOO} (called {\em cyclic comp modules} therein): more precisely, it does not come as a surprise that $\cM^*$ becomes an opposite module over $\cO$ in the sense of {\it loc.~cit.} 
If then $\cO$ is even a cyclic operad (with multiplication), by introducing the notion of a {\em cyclic $\cO$-module}, its linear dual is a cyclic opposite $\cO$-module, which means that it carries the structure of a noncommutative (or Cartan-Gerstenhaber) differential calculus up to homotopy. 
By adjunction via the natural pairing between $\cM$ and $\cM^*$, this reflects to $\cM$ and yields a calculus on $\cO$-modules rather than the already known construction using opposite modules from \cite{Kow:GABVSOMOO}. 

We therefore directly consider the situation
 in which a cyclic opposite module $\cN$ is given over a (not necessarily cyclic) operad with multiplication $\cO$, and are able to present 
a complete construction of a Cartan-Gerstenhaber calculus associated with the pair $(\cO,\cN)$, 
the trickiest part of which is exhibiting the (Gel'fand-Daletski\u\i-Tsygan) homotopy $\cT$.
A Poincar\'e duality element induces a natural Lie bracket on the (shifted) negative cyclic homology of $\mathcal{N}$, and 
 one recovers the result in \cite[Thm.~10.2]{dTdVVdB:CYDANCH} 
that inspired this construction
by simply applying it to the endomorphism operad for a $d$-Calabi-Yau algebra along with the cyclic opposite module given by the Hochschild chain spaces. The operadic formulation given here allows for the inclusion of more general situations, such as {\em twisted} Calabi-Yau algebras, that is, those where the bimodule structure of the algebra over itself is twisted by a Nakayama automorphism, which happens, {\it e.g.}, for standard quantum groups, 
Koszul algebras whose Koszul dual is Frobenius as, for example, 
Manin's quantum plane or still the Podle\'s quantum $2$-sphere \cite{BroZha:DCATHCFNHA, VdB:ARBHHACFGR, Kra:OTHCOQHS}.

\subsection{Main results}

Our first main result, Theorem \ref{fapurefreddoquadentro}, then reads as follows; see the main text for all technical details and notation.

{\renewcommand{\thetheorem}{{A}}
\begin{theorem}
\label{A}
Let $M$ be a mixed complex.
The Lie bracket on $HC^\bullet(M)$ induced by a homotopy Cartan-Gerstenhaber calculus with a duality cocycle has the form
\begin{equation*}
[{z},{w}]=(-1)^{z-1}\beta((\pi {z})\smallsmile (\pi {w})),
\end{equation*}
where $\pi: HC^\bullet(M) \to H^\bullet(M)$ and 
$\beta: H^\bullet(M) \to HC^{\bullet -1 }(M)$ are the canonical maps appearing in Connes' long exact sequence in cyclic cohomology.
\end{theorem}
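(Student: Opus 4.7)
My strategy is to unfold the construction of the bracket on cyclic cohomology at the cochain level supplied by the homotopy Cartan-Gerstenhaber calculus, and then to identify its cohomology class with the right-hand side through Connes' $SBI$ long exact sequence.

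First, I recall from Section~\ref{cartan} that, given a Poincar\'e duality cocycle $\omega$, the bracket on $HC^\bullet(M)$ is represented at the cochain level by an expression of the form $\pm\,\iota_{\pi z}\bigl(\cL_{\pi w}(\omega)\bigr)$ (identified with a cyclic cochain via the duality cocycle), plus boundary corrections supplied by the auxiliary homotopies $\cS$ and $\cT$. Since $z$ and $w$ are cyclic cocycles, their projections $\pi z$ and $\pi w$ are well-defined Hochschild classes, and the operations $\iota_{\pi z}$ and $\cL_{\pi w}$ act on $M$ through the calculus structure.

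Next, I invoke the Cartan homotopy formula
$$\cL_x \;=\; [b+B,\iota_x] \;-\; \iota_{\delta x} \;+\; \text{(exact terms)}$$
with $x=\pi w$. Since $\pi w$ is a Hochschild cocycle, the contribution $\iota_{\delta(\pi w)}$ vanishes, and because $\omega$ is a cyclic cocycle, $(b+B)\omega$ is a coboundary. A short manipulation then collapses the cochain-level bracket, on passing to cohomology, to $B$ applied to $\iota_{\pi z}\iota_{\pi w}(\omega)$. Using the compatibility of contraction with the cup product, $\iota_{\pi z}\iota_{\pi w}\sim \iota_{(\pi z)\smallsmile(\pi w)}$ (up to homotopies built from $\cS$ and $\cT$), together with the defining property of the Poincar\'e duality cocycle identifying $\iota_{-}(\omega)$ with the class itself, this becomes $B\bigl((\pi z)\smallsmile(\pi w)\bigr)$ in $M$. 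Since the connecting morphism $\beta: H^\bullet(M)\to HC^{\bullet-1}(M)$ in the Connes sequence is by definition induced by $B$, the resulting class in $HC^{\bullet-1}(M)$ is exactly $\beta\bigl((\pi z)\smallsmile(\pi w)\bigr)$. The overall sign $(-1)^{z-1}$ is dictated by the suspension conventions (the Lie bracket lives on the shifted complex $M[-1]$) combined with the Koszul sign entering the Cartan homotopy.

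The main obstacle I anticipate is controlling the auxiliary homotopies $\cS$ and $\cT$ that make the Cartan-Gerstenhaber calculus hold only up to homotopy at the cochain level, and verifying that their contributions become exact after projection to $HC^\bullet(M)$. Concretely this requires a careful diagram chase through the homotopy identities of \S\ref{cartan}, checking that no cohomological residue survives beyond the Connes coboundary term captured by $\beta$, and that the Koszul sign tracking through the shift and the commutator $[b+B,\iota_{\pi z}]$ consistently yields the factor $(-1)^{z-1}$ stated in the theorem.
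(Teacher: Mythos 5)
There is a genuine gap: your argument never engages with how the bracket on $HC^\bullet(M)$ is actually \emph{defined}. In the paper the bracket is constructed in \S\ref{fame} as $[z,w]=\Psi_{\hat\zeta}([\Psi_{\hat\zeta}^{-1}z,\Psi_{\hat\zeta}^{-1}w])$, transported through the quasi-isomorphism $\Psi_{\hat\zeta}\colon(\g^\bullet\ltimes CC^\bullet(M)[-2],\partial_{\hat\zeta})\to CC^\bullet(M)$ from the semidirect-product bracket $[(f,x),(g,y)]=(\{f,g\},\cL_f y-(-1)^{gx}\cL_g x)$. Any proof must start by writing $z=\Psi_{\hat\zeta}(f,x)$, $w=\Psi_{\hat\zeta}(g,y)$ and computing $\Psi_{\hat\zeta}(\{f,g\},\dots)=(-1)^{f+g}\cI_{\{f,g\}}\hat\zeta+u(\cL_f y-(-1)^{gx}\cL_g x)$. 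You instead posit at the outset that the bracket ``is represented at the cochain level by $\pm\,\iota_{\pi z}(\cL_{\pi w}(\omega))$ plus corrections,'' which is essentially the conclusion of the hard part of the argument, not its input. The actual chain of identities — the Gelfan'd--Daletski\u\i--Tsygan homotopy \rmref{nunja} converting $\cI_{\{f,g\}}$ into $[\cI_f,\cL_g]$, the Leibniz-type identity \rmref{nochsowas2} producing $\cL_{f\smallsmile g}$ together with the graded symmetry of Corollary \ref{I-will-need-you-as-well1}, and the cancellation of the residual terms $u\cL_f y$, $u\cL_g x$ against $\cI_f d_u y$, $\cI_g d_u x$ via the cocycle conditions \rmref{airwaves2} and \rmref{schonlos2} — is exactly the ``short manipulation'' you defer, and it is the entire content of the proof.

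A second, related omission is the role of the variable $u$. The Cartan relation on the cyclic bicomplex is $u\cL_f=[d_u,\cI_f]+\cI_{\delta f}$ with $\cI=\iota+u\cS$; the factor of $u$ is what makes $\cL_{f\smallsmile g}\hat\zeta$ land, after division by $u$, in the image of the connecting homomorphism $\beta$ of the sequence \rmref{taylor-short} (this is Remark \ref{for-later-later-use}). Your version of the Cartan formula, $\cL_x=[b+B,\iota_x]-\iota_{\delta x}+(\text{exact})$, drops the $u$, and with it the mechanism by which $\beta$ appears: saying that ``$\beta$ is by definition induced by $B$'' reproduces the formula of Lemma \ref{connecting-one} but does not establish that the class you obtain is $u$-divisible in the required sense, i.e.\ genuinely of the form $\beta(\cdot)$. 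Finally, note that the duality cocycle $\zeta\in M^{-1}$ is only a $b$-cocycle with $B\zeta$ exact; it must first be replaced by a $d_u$-cocycle $\hat\zeta\in CC^{-1}(M)$ in the same Hochschild class (Lemma \ref{cohomologous-to-palladio}) before the twisted differential and $\Psi_{\hat\zeta}$ can be used, a step your sketch also skips.
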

}

Although not every mixed complex arises from a cyclic or cocyclic module, 
the way we typically (but not necessarily) think of the mixed complex in Theorem \ref{A} is by considering $M$ as a {\em cosimplicial} complex with a {\em cocyclic} operator $\tau$, from which one builds the cyclic boundary $B$ (which will be later called $\Bc$ for better distinction) of degree $-1$ in this case.
Observe at this point that applying Theorem \ref{A} to the complex $M^i := N_{-i}$, where $N$ is a {\em chain} complexes with a cyclic {\em co}boundary $B$ of degree $+1$, one obtains a statement on {\em negative cyclic homology} of $N$.

The probably most powerful class of examples for this theorem is given by the so-called cyclic opposite modules over operads with multiplication. 
To this end, the noncommutative calculus operations for cyclic opposite modules over operads with multiplication given in \cite{Kow:GABVSOMOO} has to be extended by explicitly giving a formula for the so-called  {\em Gelfan'd-Daletski\u\i-Tsygan homotopy} $\cT$, which reads:
\begin{equation*}
\begin{split}
&\cT: \cO(p) \otimes \cO(q) \otimes \cN(n) \to \cN(n-p-q+2), \\
&\quad
(\gvf,\psi,x) \mapsto 
\displaystyle \sum^{p-1}_{j=1}  \sum^{p-1}_{i=j} (-1)^{n(j-1) + (q-1)(i-j) + p} (\gvf \circ_{p-i+j} \psi) \bullet_0 t^{j-1}(x),
\end{split}
\end{equation*}
where $\cO$ is an operad with multiplication and $\cN$ a cyclic opposite $\cO$-module.
In the context of associative algebras, this homotopy has been alluded to in \cite{GelDalTsy:OAVONCDG}, but to our knowledge no explicit expression has appeared in the literature before, let alone in the operadic context. We then obtain (see again the main text for notation and definitions):

{\renewcommand{\thetheorem}{{B}}
\begin{theorem}
\label{B}
Writing $\cT(\gvf, \psi) := \cT(\gvf, \psi, \cdot)$,
one separately has for all $\gvf, \psi \in \cO$
\begin{equation*}
[\iota_\psi, \cL_\gvf] - \iota_{\{\psi, \gvf\}} = [b, \cT(\gvf,\psi)] - \cT(\gd \gvf, \psi) - (-1)^{\gvf-1} \cT(\gvf, \delta \psi) 
\end{equation*}
on $\cN$ along with 
\begin{equation*}
[\cS_\psi, \cL_\gvf] - \cS_{\{\psi,\gvf\}} = [B, \cT(\gvf,\psi)], 
\end{equation*}
for $\gvf, \psi \in \widebar{\cO}$ on the normalised complex $\overline{\cN}$.
Both formulae unite to give the Gelfan'd-Daletski\u\i-Tsygan homotopy formula
\begin{equation*}
[\iota_\psi + \cS_\psi, \cL_\gvf] - \iota_{\{\psi,\gvf\}} - \cS_{\{\psi,\gvf\}} = [b+B, \cT(\gvf,\psi)] - \cT(\gd \gvf, \psi) - (-1)^{p-1} \cT(\gvf, \delta \psi)
\end{equation*}
on $\overline{\cN}$ for $\gvf, \psi \in \widebar{\cO}$.
\end{theorem}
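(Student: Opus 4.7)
The plan is to establish the two displayed identities separately and then obtain the combined formula by simple addition, since $b+B$ is the total differential on the (normalised) cyclic bicomplex and the graded commutator with $\cT(\gvf,\psi)$ distributes over this sum. Throughout, I would work with the explicit formulas for $\iota_\psi$, $\cL_\gvf$, $\cS_\psi$, $b$ and $B$ on a cyclic opposite $\cO$-module, as recalled from \cite{Kow:GABVSOMOO} and the preceding sections, and with the explicit double sum that defines $\cT(\gvf,\psi)$.

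For the first identity I would proceed by direct calculation. Expanding $[\iota_\psi,\cL_\gvf]$ in terms of the partial compositions $\circ_i$ and the operadic multiplication, and inserting the defining double sum for $\cT(\gvf,\psi)$ into the right hand side, one obtains two large sums indexed by the operadic insertion positions. These should be organised according to the two summation indices $i,j$ in the definition of $\cT$. The off-diagonal contributions in $[b,\cT(\gvf,\psi)]$ pair up by means of the pre-Lie (operadic associativity) identities for $\circ_i$ and cancel against the corresponding off-diagonal contributions coming from $[\iota_\psi,\cL_\gvf]$; the diagonal ones assemble into $\iota_{\{\psi,\gvf\}}$ via the standard expression of the Gerstenhaber bracket as a commutator of partial compositions. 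The remaining contributions at the extremal summation indices, where the telescoping breaks down, are absorbed precisely by the defect terms $\cT(\gd\gvf,\psi)$ and $(-1)^{\gvf-1}\cT(\gvf,\gd\psi)$ once $\gd$ is rewritten through the operad multiplication and the $\circ_i$.

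The second identity is the genuinely new and most delicate ingredient. On the normalised complex $\overline{\cN}$ the cyclic coboundary $B$ is built from the cyclic operator $t$, and the task is to commute $t$ past the partial compositions $\circ_{p-i+j}$ appearing in $\cT(\gvf,\psi)$. Here the cyclicity axiom for opposite modules produces a combinatorial shift in the index $j$, after which the double sum reorganises so that $[B,\cT(\gvf,\psi)]$ lines up with $[\cS_\psi,\cL_\gvf]$ up to a residual Gerstenhaber-bracket reassembly of the diagonal terms, which yields $\cS_{\{\psi,\gvf\}}$. The principal obstacle is the careful bookkeeping of signs: each cyclic permutation contributes a factor $(-1)^n$ that must interact with the signs $(-1)^{n(j-1)+(q-1)(i-j)+p}$ already present in the definition of $\cT$, and all of these must conspire to reproduce exactly the signs appearing in $[\cS_\psi,\cL_\gvf]-\cS_{\{\psi,\gvf\}}$. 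I expect the correct bookkeeping to parallel, at the purely operadic level, the associative-algebra argument sketched in \cite{GelDalTsy:OAVONCDG}; the normalisation is crucial, since otherwise degenerate faces would break the required shift.

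Finally, the combined Gelfan'd--Daletski\u\i--Tsygan formula follows by adding the two identities on $\overline{\cN}$. The defect terms $\cT(\gd\gvf,\psi)$ and $(-1)^{p-1}\cT(\gvf,\gd\psi)$ appear only through the first identity; the second contributes no analogues because $\cS_\psi$ is defined only for $\psi\in\overline{\cO}$ and the corresponding boundary contributions from $t$ vanish on the normalised complex. Hence the sum of the two identities is exactly the claimed homotopy formula for $[b+B,\cT(\gvf,\psi)]$.
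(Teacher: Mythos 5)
Your strategy coincides with the one the paper actually follows for Theorem \ref{nightmare}: expand $[\iota_\psi,\cL_\gvf]$ using the structure relations \rmref{SchlesischeStr} of a cyclic opposite module, peel off $\iota_{\psi\{\gvf\}}$, and match the remaining sums term by term against the expansion of $[b,\cT(\gvf,\psi)]-\cT(\gd\gvf,\psi)-(-1)^{\gvf-1}\cT(\gvf,\gd\psi)$; handle the $B$-identity by pushing the cyclic operator $t$ through the partial compositions on the normalised complex; and obtain the combined formula by adding the two identities. So there is no divergence of method.

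The gap is that you have not carried out any of the computation, and for this theorem the computation \emph{is} the content. The assertion being proved is that the specific double sum defining $\cT$, with the specific insertion positions $\circ_{p-i+j}$ and the specific signs $(-1)^{n(j-1)+(q-1)(i-j)+p}$, produces exactly the required cancellations; your outline never tests this, and it would read word for word the same if the signs or insertion indices were wrong, in which case the identities fail. In the paper the verification of the first identity alone fills Appendix \ref{shock&awe}: after reducing to Eq.~\rmref{incubo}, some twenty labelled multiple sums must be reindexed and cancelled in a delicate order (for instance $(8)+(17)+(18)+(21)=0$ and $(13)+(16)+(22)+(23)=0$), and even then the second identity \rmref{etcircenses} is left to the reader. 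None of the telescoping, pre-Lie pairing, or ``absorption into defect terms'' you invoke is exhibited, nor is the boundary bookkeeping at the extremal indices where, as you yourself note, the telescoping breaks down. A smaller point: your explanation for the absence of $\cT(\gd\cdot,\cdot)$-type defects in the $B$-identity (that $\cS_\psi$ is only defined on $\overline{\cO}$) is not the reason; structurally it reflects the splitting of the single $\K[u,u^{-1}]$-linear relation \rmref{nunja} into its $u^0$ and $u^1$ components, the $\gd$-defects all sitting in the $u^0$ part --- but at the concrete operadic level this, too, must emerge from the computation rather than be asserted.
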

}

Putting together the two preceding theorems, we formulate as a corollary (Theorem \ref{omonicesoir} in the main text):

{\renewcommand{\thetheorem}{{C}}
\begin{cor}
Let $(\cO, \mu, e)$ be an operad with multiplication and $\cN$ a cyclic unital opposite $\cO$-module. Assume we have Poincar\'e duality between $\cO$ and $\cN$, i.e., there is a cocycle $\zeta \in \cN(d)$ such that the map 
$
\cO \to \cN$ defined by $\gvf \mapsto i_\gvf \zeta = \gvf \smallfrown \zeta  
$
induces an isomorphism
$
H^{d-p}(\cO) \cong H_{p}(\cN)$.
Then the negative cyclic homology $HC^{-}_\bullet(\cN)$ carries a degree $(1-d)$ Lie bracket 
defined by
\[
[{z},{w}]=(-1)^{z+d}\beta((\pi {z})\smallsmile (\pi {w})),
\]
where $\pi: HC^-_p(\cN) \to H_p(\cN)$ and $\beta\colon H_p(\cN)\to HC^{-}_{p+1}(\cN)$ are the natural operations in Connes' long exact sequence in negative cyclic homology, and   
where
$
\smallsmile\colon  H_p(\cN)\otimes H_q(\cN)\to H_{p+q-d}(\cN)
$
is induced via the Poincar\'e duality.
\end{cor}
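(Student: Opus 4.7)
The plan is to deduce the corollary by combining Theorem B (which supplies the Gelfan'd-Daletski\u\i-Tsygan homotopy $\cT$, thereby completing the noncommutative differential calculus up to homotopy on the pair $(\cO,\cN)$) with Theorem A (which constructs a degree $-1$ Lie bracket on the cyclic cohomology of a mixed complex carrying such a calculus together with a duality cocycle), after reindexing from the cohomological to the homological setting.

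First I would verify that the hypotheses of Theorem A apply to our situation. Theorem B, together with the standard noncommutative calculus operations $\iota$, $\cL$, $\cS$ for cyclic opposite modules over operads with multiplication recalled from \cite{Kow:GABVSOMOO}, assembles into a complete homotopy Cartan-Gerstenhaber calculus whose acting Gerstenhaber algebra up to homotopy is $(\cO,\gd,\smallsmile,\{\cdot,\cdot\})$ and whose underlying mixed complex is $(\cN, b, B)$. The assumed Poincar\'e duality cocycle $\zeta \in \cN(d)$ then plays the role of the distinguished duality class required by Theorem A: its closedness under $b$ is built into the hypothesis, and the assumed isomorphism $\gvf \mapsto \iota_\gvf \zeta = \gvf\smallfrown\zeta$ is precisely the chain-level Poincar\'e duality demanded in the statement of that theorem.

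Next I would transfer Theorem A to the homological setting by means of the reindexing $M^i := \cN_{-i}$ discussed immediately after Theorem A. Under this identification, Connes' long exact sequence for cyclic cohomology of $M$ corresponds to the long exact sequence for negative cyclic homology of $\cN$, and the projection $\pi$ together with the Connes map $\beta$ on the cohomological side translate directly to the maps $\pi\colon HC^-_p(\cN)\to H_p(\cN)$ and $\beta\colon H_p(\cN)\to HC^-_{p+1}(\cN)$ in the homological version. The cup product on $H^\bullet(\cO)$ transported across Poincar\'e duality becomes the degree $-d$ pairing
\[
\smallsmile\colon H_p(\cN)\otimes H_q(\cN)\to H_{p+q-d}(\cN)
\]
appearing in the statement. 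Applying Theorem A then immediately yields a Lie bracket on $HC^-_\bullet(\cN)$ of total degree $1-d$ (contributions: $0$ from $\pi$, $-d$ from $\smallsmile$, and $+1$ from $\beta$) and of the claimed shape.

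The main obstacle I anticipate is the careful sign bookkeeping: the factor $(-1)^{z-1}$ of Theorem A must transform into $(-1)^{z+d}$ after converting cohomological degree to homological degree through $\zeta$, and after adjusting the conventions for the Connes map $B$ and the differential $b$ forced by the reindexing $M^i = \cN_{-i}$. Once this sign check is settled, the graded antisymmetry, the Jacobi identity, and well-definedness on negative cyclic homology are inherited directly from the corresponding statements in Theorem A, so no further work beyond this bookkeeping is needed.
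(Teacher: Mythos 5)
Your overall strategy is the one the paper itself follows for Theorem \ref{omonicesoir}: assemble the operadic operators $i,L,S$ together with the homotopy $T$ into a homotopy Cartan--Gerstenhaber calculus of $\g^k:=\overline{\cO}(k+1)$ on a mixed cochain complex built from $\overline{\cN}$, check that the duality cocycle qualifies as the distinguished cocycle of Theorem A, and translate the resulting bracket back to negative cyclic homology via Definition \ref{brot} and the normalisation isomorphisms. There is, however, one genuine gap: the ``duality cocycle'' of Theorem A is a \emph{Palladio} cocycle in the sense of Definition \ref{definition-of-p}, which has \emph{three} conditions, not the two you verify. Besides $\bc\zeta=0$ and $\iota_{(\cdot)}\zeta$ being a quasi-isomorphism, one needs $\Bc\zeta$ to be exact; this is precisely what allows $\zeta$ to be lifted to a $d_u$-cocycle $\hat\zeta$ in $CC^{-1}(M)$ (Lemma \ref{cohomologous-to-palladio}), on which the quasi-isomorphism $\Psi_{\hat\zeta}$ and hence the whole bracket construction depend. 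The corollary's hypotheses do not grant this, so it must be derived. The paper does so in Remarks \ref{bocconotti} and \ref{landweber}: since $\g^{-2}=\overline{\cO}(-1)=0$ one has $H^{-2}(\g)=0$, hence $H^{-2}(M)=0$ by the duality isomorphism, and the exactness condition is automatic. Your argument needs this (easy but non-optional) degree observation.

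A secondary bookkeeping point: the reindexing $M^i:=N_{-i}$ you quote from the remark after Theorem A is not quite the one used in the proof. The paper sets $M^k:=\overline{\cN}(d-k-1)$, absorbing the shift by $d$ into the definition of $M$ so that $\zeta\in\cN(d)$ lands in $M^{-1}$, where Definition \ref{definition-of-p} requires it, and so that $\g^k=\overline{\cO}(k+1)$ matches the degree conventions of the abstract calculus; with the unshifted reindexing, $\zeta$ would sit in $M^{-d}$ and Theorem \ref{fapurefreddoquadentro} would not literally apply. Once this shift is in place, your anticipated conversion of the sign $(-1)^{z-1}$ into $(-1)^{z+d}$ is correct, and the remaining identifications (Connes' sequence, the transported cup product, antisymmetry and Jacobi) go through exactly as you describe.
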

}

In Section \ref{sete}, we consider a dual approach, in the sense that instead of restricting the homotopy calculus from the periodic cocyclic complex to the cocyclic bicomplex, we 
consider the induced calculus on the quotient complex leading to negative cyclic cohomology. If then again a duality cocycle is given, we obtain in Theorems \ref{ruwenogien} \& \ref{chas-sullivan-menichi}:

{\renewcommand{\thetheorem}{{D}}
\begin{theorem}
\label{D}
Let $M$ be a mixed complex. Then a homotopy Cartan-Gerstenhaber calculus with a duality cocycle induces a Batalin-Vilkoviski\u\i\ algebra structure $(H^{\bullet-1}(M),\{\cdot,\cdot\},\smallsmile, B)$ on the (shifted) cohomology of $M$. Moreover, the degree $-2$ {\em Chas-Sullivan-Menichi bracket} 
\[
[\cdot,\cdot]\colon HC^{\bullet-1}_-(M)\otimes HC^{\bullet-1}_-(M)\to HC^{\bullet-1}_-(M)[-2]
\]
on (shifted) negative cyclic cohomology, defined by
\[
[{x},{y}] := (-1)^x  j ((\beta{x})\smallsmile(\beta{y})),
\]
is a Lie bracket and has the property 
$$
\beta[\cdot, \cdot] = \{ \beta(\cdot), \beta(\cdot)\},
$$
where $j: H^\bullet(M) \to HC^\bullet_-(M)$ and $\beta: HC^{\bullet}_-(M) \to  H^{\bullet-1}(M)$ are the canonical maps appearing in the long exact sequence relating the cohomology of  $M$ to its negative cyclic cohomology.
\end{theorem}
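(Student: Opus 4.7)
The plan is to derive both the BV algebra structure on $H^{\bullet-1}(M)$ and the CSM bracket on $HC^{\bullet-1}_-(M)$ from the homotopy Cartan-Gerstenhaber calculus together with the duality cocycle, and to exhibit their compatibility through the connecting maps in Connes' long exact sequence. First I would establish the BV structure by specialising the chain-level homotopy Cartan formula of Theorem B to cohomology via the duality cocycle giving the isomorphism $H^\bullet(\cO)\xrightarrow{\sim} H^{\bullet+d}(M)$: on descending to cohomology the boundary-term homotopies vanish and the remaining identity is exactly the BV equation
\[
\{f,g\}=(-1)^{f}B(f\smallsmile g)-(-1)^{f}(Bf)\smallsmile g - f\smallsmile(Bg).
\]
Together with $B^2=0$ on $H^\bullet(M)$ and the Gerstenhaber algebra structure already established, this yields the announced BV algebra on the shifted cohomology.

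For the CSM bracket, well-definedness on cohomology classes is automatic from the formula. Graded skew-symmetry follows from graded commutativity of $\smallsmile$ combined with the sign $(-1)^x$ and the degree shift produced by $\beta$. To prove the compatibility $\beta[x,y]=\{\beta x,\beta y\}$, I would apply $\beta$ to the defining formula and invoke two properties of Connes' LES: $\beta\circ j=-B$ on $H^\bullet(M)$ (the connecting map is realised by the Connes operator up to sign) and $B\circ\beta=0$ (the image of $\beta$ sits in $\ker B$, since $\beta$ extracts a $B$-cocycle from its cohomology representative). Plugging these into the BV identity kills the two off-diagonal terms $(B\beta x)\smallsmile\beta y$ and $\beta x\smallsmile(B\beta y)$, and one is left with $-(-1)^x B(\beta x \smallsmile \beta y)=(-1)^x\beta j(\beta x\smallsmile\beta y)=\beta[x,y]$, as required.

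Finally, the Jacobi identity for $[\cdot,\cdot]$ reduces, via the just-established compatibility and the cyclic expansion $[[x,y],z]=(-1)^{|[x,y]|}j(\{\beta x,\beta y\}\smallsmile\beta z)$, to the vanishing of $j\bigl(\sum_{\mathrm{cyc}}\pm\{\beta x,\beta y\}\smallsmile\beta z\bigr)$. Using the BV identity, $B\circ\beta=0$, and the graded Leibniz rule for the Gerstenhaber bracket with respect to $\smallsmile$, the cyclic sum inside $j$ rearranges to $\pm B(\beta x \smallsmile \beta y \smallsmile \beta z)$ plus terms already in $\ker j$; since $j\circ B=0$ in Connes' LES, the whole expression vanishes. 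The principal technical obstacle is careful sign bookkeeping through the grading shifts and the explicit verification of the three LES identities $\beta j=-B$, $B\beta=0$, and $jB=0$ in the paper's conventions; once these are in place, Jacobi follows formally from the BV structure on $H^{\bullet-1}(M)$.
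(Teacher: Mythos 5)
Your proposal is correct and matches the paper's proof in all essentials: the BV identity on $H^{\bullet-1}(M)$ is obtained exactly as you describe, by evaluating the homotopy Cartan formulae on the duality (Palladio) cocycle and passing to cohomology, where the $\cS$- and $\cT$-homotopies contribute only coboundaries; and the skew-symmetry, the compatibility $\beta[\cdot,\cdot]=\{\beta(\cdot),\beta(\cdot)\}$, and the Jacobi identity all follow from $j\beta=0$, $\beta j=\Bc$, and the second-order (seven-term) identity for $\Bc$, which is precisely the combination of the BV relation with the Leibniz rule that you invoke (the paper runs this computation starting from the seven-term identity rather than from the expanded Jacobiator, but the content is identical). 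The only detail to pin down is the sign of $\beta\circ j$, which in the paper's conventions (Lemma \ref{B-returns}) is $+\Bc$ rather than $-\Bc$; with that convention your sign bookkeeping goes through consistently.
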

}



As discussed before, an interesting situation arises when the BV-bracket $\{\cdot, \cdot\}$ vanishes on $H^{\bullet-1}(M)$.
An immediate consequence is then Theorem \ref{arkopharma}, which we state as a corollary here:

{\renewcommand{\thetheorem}{{E}}
\begin{cor}
\label{E}
If $\{\cdot,\cdot\}$ vanishes identically on $H^{\bullet-1}(M)$, then 
\[
\{\!\!\{ x,y\}\!\!\}:=(-1)^x (\Bc{x})\smallsmile(\Bc{y})
\]
defines a degree $-2$ Lie bracket on $H^{\bullet-1}(M)$ such that $j\{\!\!\{ x,y\}\!\!\}=[jx,jy]$ and $\Bc\{\!\!\{ x,y\}\!\!\}=0$. This bracket turns $\big(H^{\bullet-1}(M),\smallsmile,\{\!\!\{\cdot,\cdot\}\!\!\}\big)$ into an $e_3$-algebra.
\end{cor}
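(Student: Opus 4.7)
The plan is to derive the entire corollary from two elementary inputs: the vanishing hypothesis $\{\cdot,\cdot\}\equiv 0$, combined with the Batalin-Vilkoviski\u\i\ identity from Theorem~\ref{D} (namely \eqref{avvisoagliutenti}), forces $\Bc$ to be a graded derivation of $\smallsmile$ of degree $-1$ on $H^\bullet(M)$; and $\Bc^2=0$ on any mixed complex, hence on its cohomology. Together these make every required property of $\{\!\!\{\cdot,\cdot\}\!\!\}$ almost immediate, with no further structural input from the underlying homotopy Cartan-Gerstenhaber calculus.

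First I would specialise the BV identity to the present setting, obtaining
\[
\Bc(x\smallsmile y)=(\Bc x)\smallsmile y+(-1)^{|x|}x\smallsmile(\Bc y).
\]
A one-line computation then yields $\Bc\{\!\!\{x,y\}\!\!\}=0$ using $\Bc^2=0$, which is already the second claim of the corollary. This identity in turn forces the Jacobi identity for $\{\!\!\{\cdot,\cdot\}\!\!\}$ to hold term by term in the graded cyclic sum, because every nested double bracket expands, at the level of the defining formula, as a cup product containing a factor of the form $\Bc\{\!\!\{\cdot,\cdot\}\!\!\}=0$.

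Next I would establish the remaining $e_3$-algebra axioms. Graded antisymmetry of degree $-2$ reduces, after tracking the sign $(-1)^x$ in the definition and the identity $|\Bc x|=|x|-1$, to graded commutativity of $\smallsmile$ on $H^\bullet(M)$. The Leibniz rule
\[
\{\!\!\{x,y\smallsmile z\}\!\!\}=\{\!\!\{x,y\}\!\!\}\smallsmile z+(-1)^{(|x|-2)|y|}y\smallsmile\{\!\!\{x,z\}\!\!\}
\]
follows directly from the derivation property of $\Bc$ together with graded commutativity of $\smallsmile$. Combined with the (shifted) graded commutative associative product $\smallsmile$, this gives precisely the data of an $e_3$-algebra on $(H^{\bullet-1}(M),\smallsmile,\{\!\!\{\cdot,\cdot\}\!\!\})$.

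For the comparison $j\{\!\!\{x,y\}\!\!\}=[jx,jy]$, I would combine the explicit formula
\[
[jx,jy]=(-1)^{|x|}\,j\bigl((\beta jx)\smallsmile(\beta jy)\bigr)
\]
of Theorem~\ref{D} with the standard identification $\beta\circ j=\pm\Bc$ coming from Connes' long exact sequence for negative cyclic cohomology, which reduces the identity to the definition of $\{\!\!\{\cdot,\cdot\}\!\!\}$. The main obstacle is not conceptual but combinatorial: all algebraic content is powered by $\Bc^2=0$ and the BV-identity, but the degree shift to $-2$, the sign $(-1)^x$ in the definition, and the precise sign appearing in $\beta\circ j=\pm\Bc$ must be tracked consistently so that antisymmetry, Leibniz, and the comparison with the Chas-Sullivan-Menichi bracket emerge with exactly the stated signs.
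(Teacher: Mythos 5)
Your proposal is correct, and it reaches the conclusion by a route that coincides with the paper's for antisymmetry, the Leibniz rule, and the comparison $j\{\!\!\{x,y\}\!\!\}=[jx,jy]$, but differs in two places. First, you obtain $\Bc\{\!\!\{x,y\}\!\!\}=0$ directly from the derivation property of $\Bc$ (forced by the vanishing of $\{\cdot,\cdot\}$ via the BV identity) together with $\Bc^2=0$; the paper instead deduces it \emph{after} the comparison identity, by applying $\beta$ to $j\{\!\!\{x,y\}\!\!\}=[jx,jy]$ and using $\beta[\cdot,\cdot]=\{\beta(\cdot),\beta(\cdot)\}$ together with the vanishing hypothesis. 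Both are one-line arguments and both are valid. Second, and more substantially, your proof of the Jacobi identity is genuinely different: having $\Bc\{\!\!\{\cdot,\cdot\}\!\!\}=0$ in hand, every nested bracket such as $\{\!\!\{x,\{\!\!\{y,z\}\!\!\}\}\!\!\}=(-1)^x(\Bc x)\smallsmile(\Bc\{\!\!\{y,z\}\!\!\})$ vanishes outright, so the Jacobi identity degenerates to $0=0+0$. The paper instead routes through the seven-term (second-order) identity for the BV operator, first deriving the Jacobi relation with a correction term $-(-1)^y\Bc(\Bc x\smallsmile\Bc y\smallsmile\Bc z)$ and then killing that term using the derivation property and $\Bc^2=0$. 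Your shortcut is more elementary and exploits the full strength of the vanishing hypothesis; the paper's derivation is more structural, exhibiting where the Jacobi identity comes from (it is the shadow of the Jacobi identity already established for the Chas--Sullivan--Menichi bracket on $HC^\bullet_-$) and making transparent which term obstructs Jacobi when $\{\cdot,\cdot\}$ does not vanish. The only caveat in your write-up is the phrase ``term by term in the graded cyclic sum'': the identity \rmref{e32} is stated in Leibniz form rather than as a cyclic sum, but since every individual term vanishes the distinction is immaterial.
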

}

We already mentioned that a large class of examples to which Theorem \ref{D} can be applied is given by (bi)modules over operads if they are endowed with a cyclic operation so as to obtain a homotopy calculus on them. More precisely, in Theorem \ref{nochkeinname} we show:

{\renewcommand{\thetheorem}{{F}}
\begin{theorem}
\label{F}
The datum of a cyclic module $(\cM, \tau)$ over a cyclic operad with multiplication $(\cO, \mu, e, \tau)$ induces a noncommutative differential calculus on $\cM$ over $\cO$.
\end{theorem}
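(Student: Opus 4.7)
The plan is to reduce the statement to the case of cyclic opposite modules, for which a noncommutative differential calculus up to homotopy has already been constructed in \cite{Kow:GABVSOMOO} together with the explicit Gelfan'd-Daletski\u\i-Tsygan homotopy $\cT$ provided in Theorem \ref{B}. The passage from cyclic modules to cyclic opposite modules is performed by $\kk$-linear duality, and the calculus is then transported back to $\cM$ by adjunction with respect to the natural pairing $\langle \cdot,\cdot\rangle\colon\cM\otimes\cM^*\to\kk$; this is exactly the route anticipated in the introduction.

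The first step is to verify that for a cyclic $\cO$-module $(\cM,\tau)$ the dual $\cM^*:=\Hom_\kk(\cM,\kk)$, equipped with the transpose of $\tau$ and with the transposes of the action maps $\cO(p)\otimes\cM(n)\to\cM(n-p+1)$ viewed as coaction maps, is a cyclic opposite $\cO$-module in the sense of \cite{Kow:GABVSOMOO}. This is essentially a matter of unwinding definitions: the axioms of a cyclic opposite module are set up as the componentwise duals of those of a cyclic module, and the cyclicity condition together with the compatibility of $\tau$ on $\cM$ with the cyclic operadic operator on $\cO$ transposes verbatim.

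Once $\cM^*$ is placed in the cyclic opposite module framework, the operations $\iota_\gvf,\cL_\gvf,\cS_\gvf,\cT(\gvf,\psi)$ of a homotopy Cartan-Gerstenhaber calculus are defined on it and satisfy the required identities. Each such operation $\cF\colon\cO^{\otimes k}\otimes\cM^*(n)\to\cM^*(m)$ has an adjoint $\cF^\vee\colon\cO^{\otimes k}\otimes\cM(m)\to\cM(n)$ characterised by
\[
\langle \cF^\vee(\gvf_1,\dots,\gvf_k)(x),\xi\rangle \;=\; \langle x,\cF(\gvf_1,\dots,\gvf_k)(\xi)\rangle
\]
for all $x\in\cM$ and $\xi\in\cM^*$. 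Since the identities of a noncommutative calculus (graded commutativity of contractions, the Cartan homotopy formula relating $\cL$ to $\iota$ and $b$, and the Gelfan'd-Daletski\u\i-Tsygan homotopy formula) are phrased as equalities between (anti)commutators and composites of these operations with the differentials $b$ and $B$, transposing them yields the analogous identities on $\cM$.

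The main obstacle I anticipate is identifying the adjoint operations obtained in this way with explicit, manifestly local formulas directly in terms of the $\cO$-module structure on $\cM$ and the cyclic operator $\tau$, in perfect analogy with the formulas for the cyclic opposite case; in particular, one has to write out a closed expression for $\cT(\gvf,\psi)$ on $\cM$ parallel to the one in Theorem \ref{B} and check the sum indices and signs against the dualisation procedure. This is a routine but lengthy sign-bookkeeping exercise which the cyclic operad axioms intertwining $\tau$ on $\cO$ with the partial compositions make possible, and upon its completion the identities asserted by Theorem \ref{F} become immediate as transposes of the identities already established on $\cM^*$.
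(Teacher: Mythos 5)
Your proposal follows essentially the same route as the paper: Lemmas \ref{forzafrosinone1} and \ref{forzafrosinone2} establish that $\cM^*$ is a cyclic opposite $\cO$-module, and the proof of Theorem \ref{nochkeinname} then writes down explicit formulas for $\bc,\Bc,\iota,\cL,\cS,\cT$ on $\cM$ and checks that they are adjoint to the operations on $\cM^*$ under the canonical pairing, so the calculus identities transfer by nondegeneracy of that pairing in the $\cM$-variable. The only caveat is that an operator on $\cM^*$ a priori only dualises to one on $\cM^{**}$, so the ``explicit formulas'' step you defer as bookkeeping is where the existence (not merely the identification) of the adjoint operations on $\cM$ is actually established --- precisely the computation the paper carries out for $\iota$ and $\cS$.
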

}

In particular, we can put $\cM = \cO$, and this way, along with the aforementioned Theorem \ref{D}, we obtain in Theorem \ref{nocheins} what we formulate as a corollary here:

{\renewcommand{\thetheorem}{{G}}
\begin{cor}
\label{G}
For a cyclic operad with multiplication $\cO$, one has
\begin{equation*}
\begin{split}
\{\psi,\gvf\} &= 
(-1)^\psi\bigl(\cL_\psi \gvf -(-1)^{\psi\gvf} \cL_\gvf\psi - B(\psi \smallsmile \gvf)\bigr) \\
&=
 - \psi \smallsmile B(\gvf) + (-1)^{(\gvf-1)\psi} B(\gvf \smallsmile \psi) - (-1)^{(\gvf-1)(\psi-1)} \gvf \smallsmile B(\psi)  
\\
        &\quad  - (-1)^{(\gvf-1)\psi} \delta\big(\cS_\gvf \psi)  + (-1)^{(\gvf-1)\psi} \cS_{\gd \gvf} \psi  + (-1)^{(\gvf-1)(\psi-1)} \cS_{\gvf} \gd \psi
\\
&\quad
+ (-1)^{\gvf} \delta(\cS_\psi\gvf)  +  (-1)^\psi \cS_{\psi}\gd\gvf + (-1)^{\psi+\gvf-1} \cS_{\gd \psi} \gvf
\end{split}
\end{equation*}
on the normalised complex $\overline{\cO}$. In particular, the cohomology groups $H^\bullet(\cO)$ carry the structure of a  Batalin-Vilkoviski\u\i\ algebra.
\end{cor}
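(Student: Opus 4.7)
The approach is to apply Theorem~\ref{nochkeinname} to $\cO$ viewed as a cyclic module over itself, which immediately supplies a noncommutative differential calculus on $\cO$ over $\cO$ together with all the operations $\iota,\cL,\cS,\cT,b,B$. Under the identification $\cM = \cO$, the contraction reduces to the cup product, $\iota_\gvf\psi = \gvf\smallsmile\psi$ on $\overline{\cO}$, and the unit $e\in\cO$ acts as a degree-zero Poincar\'e duality cocycle: the map $\gvf\mapsto\iota_\gvf e$ is literally the identity, hence induces an isomorphism on cohomology. Consequently, the hypotheses of Theorem~\ref{ruwenogien} are satisfied, and the Batalin-Vilkoviski\u\i\ algebra structure on the (shifted) cohomology of $\cO$ will follow as a formal consequence once the two chain-level identities of the corollary are established.

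For the shorter identity, I would start from the chain-level Cartan homotopy formula of the calculus, which expresses $[B,\iota_\gvf]$ in terms of $\cL_\gvf$ modulo $b$-boundaries and an $\cS$-correction. Substituting $\iota_\gvf\psi = \gvf\smallsmile\psi$ yields an expression for $\cL_\gvf\psi$ in terms of $B$ applied to cup products. Symmetrising in $\psi$ and $\gvf$ and subtracting the common $B(\psi\smallsmile\gvf)$ contribution (keeping track of graded commutativity of $\smallsmile$ up to homotopy) should then exhibit $(-1)^\psi\bigl(\cL_\psi\gvf -(-1)^{\psi\gvf}\cL_\gvf\psi - B(\psi\smallsmile\gvf)\bigr)$ as a chain-level representative of the Gerstenhaber bracket $\{\psi,\gvf\}$ on $\overline{\cO}$. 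Alternatively, the same identity should drop out of Theorem~B by evaluating at the duality cocycle $e$ and using $\iota_\psi e = \psi$, $\cS_\psi e = 0$, and $\cL_\gvf e \sim B\gvf$.

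For the longer identity, I would then expand each Lie derivative appearing in the short formula using the explicit chain-level Cartan formula supplied by the calculus, schematically of the form $\cL_\gvf \sim [b+B,\iota_\gvf + \cS_\gvf] - \iota_{\gd\gvf} - \cS_{\gd\gvf}$, identifying once again $\iota_\chi\eta = \chi\smallsmile\eta$ and $b = \gd$ on the normalised complex, and collect the resulting $\smallsmile$-, $B$-, $\gd$-, and $\cS$-terms, which should reorganise precisely into the expression displayed in the statement.

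The main obstacle will be the careful sign bookkeeping in these symmetrisations and expansions, together with verifying that the residual $\cT$-homotopy terms supplied by Theorem~B cancel pairwise (or are absorbed into $[b+B,\cdot]$-boundaries) when $\cM = \cO$ and the contraction acts as the cup product. Once these two chain-level identities are in place, the Batalin-Vilkoviski\u\i\ algebra structure on $H^\bullet(\cO)$ follows directly by passing to cohomology in the mixed complex $(\cO,\gd,B)$ with duality cocycle $e$, via the mechanism of Theorem~D.
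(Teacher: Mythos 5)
Your plan is essentially the paper's proof. The authors prove the first identity by applying the first Gel'fand--Daletski\u\i--Tsygan relation of Theorem \ref{nochkeinname} (i.e., Theorem B for the calculus on $\cO$ over itself) to the unit $e$, using exactly the evaluations you list: $\iota_\gvf\psi=\gvf\smallsmile\psi$, $\iota_\gvf e=\gvf$, $\cL_\gvf e=\Bc\gvf$, $\cS_\gvf e=0$; the second identity then follows by expanding the Lie derivatives via the Cartan--Rinehart homotopy formulae \rmref{dellera-operad}, and the BV structure is read off in cohomology, where the $\delta$-coboundaries vanish and $\smallsmile$ is graded commutative. Note that your ``alternative'' route for the short identity is in fact the one that carries the argument: symmetrising the Cartan formula alone only manufactures the right-hand side and does not identify it with the Gerstenhaber bracket $\{\psi,\gvf\}$ at the chain level --- that identification is precisely what evaluating $[\iota_\gvf,\cL_\psi]-\iota_{\{\gvf,\psi\}}$ at $e$ provides.

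One correction to your expectations about ``the main obstacle'': the residual $\cT$-terms neither cancel pairwise nor get absorbed into $[b+B,\cdot]$-boundaries. Evaluating at the unit one finds $\cT(\gvf,\psi)(e)=(-1)^\psi\cS_\psi\gvf$ (and $\cT(\gvf,\psi)(\bc e)=0$), so the three terms $[\bc,\cT(\gvf,\psi)]-\cT(\gd\gvf,\psi)-(-1)^{\gvf-1}\cT(\gvf,\gd\psi)$ applied to $e$ become, up to signs, exactly the last line $(-1)^{\gvf}\delta(\cS_\psi\gvf)+(-1)^\psi\cS_\psi\gd\gvf+(-1)^{\psi+\gvf-1}\cS_{\gd\psi}\gvf$ of the displayed formula; they survive at the chain level and only disappear upon passing to cohomology. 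With that adjustment your outline matches the paper's argument step for step.
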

}

The last statement in Corollary \ref{G} has been first proven in \cite[Thm.~1.4]{Men:BVAACCOHA}. Combining Theorems \ref{D} \& \ref{F}, one recovers Corollary 1.5 in {\em op.~cit.}, which states that the negative cyclic cohomology of a cyclic operad is endowed with a Lie bracket of degree $-2$.

\section{Preliminaries}

In this section, we recall some general standard facts about cyclic (co)homology in the form they will be needed in the subsequent sections; see, for example, \cite{Lod:CH} for an exhaustive treatment.

In order to fix notation, throughout the whole paper we will work over a fixed characteristic zero field $\K$, and $(M^\bullet, \bc)$ will be a cochain complex over $\K$, {\it i.e.}, a $\mathbb{Z}$-graded $\K$-vector space $M^\bullet=\bigoplus_{i\in \Z}M^i$ together with a degree $1$ differential
\[
\bc \colon M^\bullet\to M^\bullet[1]
\]
such that $\bc^2=0$. Here $M^\bullet[1]$ denotes the shifted graded vector space $(M^\bullet[1])^i=M^{i+1}$. We denote by $H^\bullet(M)$ the cohomology of $(M, \bc)$.

\begin{example}\label{chain-to-cochain} Let $(N_\bullet, b)$ be a \emph{chain} complex over $\K$, {\it i.e.}, a $\mathbb{Z}$-graded $\K$-vector space $N_\bullet=\bigoplus_{i\in \Z} N_i$ together 
with a degree $-1$ differential $b \colon N_i\to N_{i - 1}$ such that $b^2=0$. Out of $(N_\bullet, b)$ one can define two cochain complexes. The first one consists in setting $M^i=N_{-i}$ with differential $\bc \colon M^{i} \to M^{i+1}$ given by $b \colon N_{-i}\to N_{-i-1}$. In this case we have $H^\bullet(M)=H_{-\bullet}(N)$, where $H_i(N)$ denotes the $i$-th homology group of $(N_\bullet, b)$. The second one consists in setting $M^i=\mathrm{Hom}_\K(N_i,\K)$ with differential $\bc$ given by the adjoint of $b$, that is, by $b^* \colon \mathrm{Hom}_\K(N_i,\K)\to \mathrm{Hom}_\K(N_{i+1},\K)$. In this case we have $H^{\bullet}(N) := H^\bullet(M)$, and we call $H^i(N)$ the $i$-th {\em cohomology} group of $(N_\bullet, b)$. 
\end{example}

\begin{definition}
A \emph{cyclic} differential on $(M^\bullet,\bc)$ is a degree $-1$ differential
\[
\Bc\colon M^\bullet\to M^\bullet[-1]
\]
such that $\Bc^2=0$ and $[\Bc,\bc]=0$. Here $[\Bc,\bc]$ is the graded commutator $[\Bc,\bc]=\Bc\bc+\bc\Bc$ and such a datum $(M^\bullet, \bc, \Bc)$ is usually referred to as 
{\em mixed complex} \cite{Kas:CHCAMC}.
\end{definition}

\begin{example}
\label{ancorauno}
If $(N_\bullet,b)$ is a chain complex, a cyclic differential on $N_\bullet$ is a degree $1$ differential $B$ such that $B^2=0$ 
and $[B,b]=0$ such that  $(N_\bullet,b, B)$ is a mixed complex.
It naturally induces cyclic differentials on the cochain complexes $M^i=N_{-i}$ and $M^i=\mathrm{Hom}_\K(N_i,\K)$ of Example \ref{chain-to-cochain}.
\end{example}

\begin{rem}
Thus a mixed complex is both a chain
and a cochain complex and a priori there is a complete symmetry between the given boundary and coboundary operator. However, in the spirit of \cite{Kas:CHCAMC}, we shall view  $\bc$ (resp.\ $b$) as the ``main''
differential and consider $\Bc$ (resp.\ $B$) as an additional datum that ``perturbs'' the complex $(M^\bullet,\bc)$ (resp.\  $(N_\bullet,b)$). To better distinguish whether we consider a cochain or a chain complex as the main complex, we will sometimes use the (slightly misleading) terminology of {\em mixed cochain (resp.\ chain) complex}. 
\end{rem}

\begin{rem}
\label{rem.Bc-in-cohomology}
As $[\Bc,\bc]=0$, the differential $\Bc$ is a morphism of complexes $\Bc\colon (M^\bullet,\bc)\to (M^\bullet[-1],\bc[-1])$ and so it induces a linear operator (which we will denote by the same symbol) $\Bc\colon H^\bullet(M)\to H^{\bullet-1}(M)$. This operator can be seen as a degree $-1$ differential on $H^\bullet(M)$.
\end{rem}

It is convenient to introduce a degree $+2$ variable $u$ and consider the graded vector space $M^\bullet[[u,u^{-1}]]$, whose graded components are
\[
M[[u,u^{-1}]]^n=\prod_{i+2j=n}M^iu^j.
\]
Then the two differentials $\bc$ and $\Bc$ on $M^\bullet$ define the $\K[u,u^{-1}]$-linear differential of  degree $1$
\[
d_u=\bc+u\Bc
\]
on $M^\bullet[[u,u^{-1}]]$. 

\begin{definition}
Let $(M^\bullet,\bc,\Bc)$ be a mixed cochain complex. The \emph{periodic cocyclic complex} $CC_{\mathrm{per}}^\bullet(M)$ of $M^\bullet$ is the cochain complex $(M^\bullet[[u,u^{-1}]],d_u)$.
The 
\emph{cocyclic complex} $CC^\bullet(M)$
is the subcomplex $(M^\bullet[[u]],d_u)$ of $CC_{\mathrm{per}}^\bullet(M)$; its cohomology will be called the {\em cyclic cohomology} of $M^\bullet$ and denoted by $HC^\bullet(M)$. The 
{\em negative cocyclic complex} $CC^\bullet_-(M)$ 
is the quotient complex $(M^\bullet[[u,u^{-1}]]/uM^\bullet[[u]],d_u)$ of $CC_{\mathrm{per}}^\bullet(M)$; its cohomology will be called the {\em negative cyclic cohomology} of $M^\bullet$ and denoted $HC^\bullet_-(M)$. 
\end{definition}

\begin{dfn}\label{negative-cyclic-homology}
\label{brot}
Let $(N_\bullet,b,B)$ be a mixed (chain) complex and let $(M^\bullet,\bc,\Bc)$ be the mixed (cochain) complex defined by $M^i := N_{-i}$. 
Define then 
\begin{equation}
\label{trimalchio}
HC^-_{-\bullet}(N) := HC^\bullet(M),
\end{equation}
and we call $HC^{-}_i(N)$ the {\em $i$-th negative cyclic homology group} of $(N_\bullet,b,B)$. 
If instead $(M^\bullet,\bc,\Bc)$ is the mixed (cochain) complex defined by $M^i := \mathrm{Hom}_\K(N_i,\K)$, 
we then define 
$
HC_-^{\bullet}(N) := HC^\bullet_-(M),
$
and call $HC_{-}^i(N)$ the {\em $i$-th negative cyclic cohomology group} of the mixed (chain) complex $(N_\bullet,b,B)$. 
\end{dfn}

\begin{rem}
We remind the reader that in our definition the cochain complex $M$ is {\em not} concentrated in positive degree. By drawing a picture of the full bicomplex, it is clear that in the cohomology case the negative cocyclic complex is truncated. For a chain complex $N$ proceeding as in Eq.~\rmref{trimalchio} and analogously for the definition of the cyclic homology of $N$, a point reflection at the origin brings the bicomplex into the customary form where the cyclic complex is the truncated one.

For completeness, although we are not going to use this fact, we also mention that $HC^\bullet$ can be seen as dual to $HC^-_\bullet$ if considered as a $\K[u]$-module \cite[\S5.1.17]{Lod:CH}.
\end{rem}

\begin{lemma}
\label{connecting}
There is a short exact sequence of complexes
\begin{equation}
\label{taylor-short}
0\to CC^\bullet(M)[-2]\xrightarrow{u} CC^\bullet(M)\xrightarrow{\mathrm{ev}_0} M^\bullet\to 0,
\end{equation}
where the first map is multiplication by $u$ and the second map is evaluation at $u=0$. As a consequence, we have a long exact sequence
\begin{equation}
\label{taylor-long}
\cdots \to HC^{n-2}(M)\xrightarrow{S} HC^n(M)\xrightarrow{\pi} H^n(M)\xrightarrow{\beta} HC^{n-1}(M)\to \cdots.
\end{equation}
\end{lemma}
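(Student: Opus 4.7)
The plan is to apply standard homological algebra to the short exact sequence, once the maps have been checked to be of the correct degree and to commute with $d_u$. First, I would verify that multiplication by $u$ is a cochain map $u\colon CC^\bullet(M)[-2]\to CC^\bullet(M)$: since $u$ has degree $+2$, this map is of degree zero, and $\K[u,u^{-1}]$-linearity immediately yields $u\circ d_u = d_u\circ u$. For evaluation at $u=0$, I would note that for any $\sum_{j\ge 0} m_j u^j \in M^\bullet[[u]]$ one has
\[
d_u\!\left(\sum_{j\ge 0} m_j u^j\right) = \sum_{j\ge 0}(\bc m_j)u^j + \sum_{j\ge 0}(\Bc m_j)u^{j+1},
\]
so $\mathrm{ev}_0$ of the right-hand side equals $\bc m_0 = \bc(\mathrm{ev}_0(\sum m_j u^j))$, showing that $\mathrm{ev}_0$ is a cochain map.

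Next, I would check exactness as a sequence of graded vector spaces. Multiplication by $u$ on the formal power series $M^\bullet[[u]]$ is clearly injective, and $\mathrm{ev}_0$ is obviously surjective; finally, $\xi=\sum m_j u^j$ lies in $\ker(\mathrm{ev}_0)$ if and only if $m_0=0$, i.e., $\xi = u\cdot\sum_{j\ge 1}m_j u^{j-1}\in\mathrm{im}(u)$. This establishes the short exact sequence \rmref{taylor-short}.

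The long exact sequence \rmref{taylor-long} then follows from the zig-zag lemma applied to \rmref{taylor-short}, with $\pi$ being induced by $\mathrm{ev}_0$ and $S$ by multiplication by $u$ (giving the periodicity operator). The one point that deserves comment is the identification of the connecting homomorphism $\beta\colon H^n(M)\to HC^{n-1}(M)$. Given a class $[m]\in H^n(M)$, pick the constant lift $m\in CC^n(M)$; applying $d_u$ gives $d_u m = \bc m + u\Bc m = u\Bc m$ since $\bc m=0$, which is $u$ times the cocycle $\Bc m \in CC^{n-1}(M)$ (using $\Bc\bc m=-\bc\Bc m=0$ and $\Bc^2=0$). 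Hence $\beta([m])=[\Bc m]$, consistent with Remark \ref{rem.Bc-in-cohomology}.

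There is no real obstacle here beyond bookkeeping: the entire argument is routine homological algebra, and the only mild subtlety is keeping track of the degree shift $[-2]$ induced by the degree $+2$ of $u$, together with the sign of the differential on the shifted complex, so that $u$ indeed defines a degree zero chain map.
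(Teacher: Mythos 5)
Your proof is correct and follows exactly the route the paper takes: the paper states Lemma \ref{connecting} without proof as standard homological algebra, and your identification of the connecting homomorphism via the constant lift $m$ and $d_u m = u\Bc m$ is precisely the argument the paper gives separately as Lemma \ref{connecting-one}. The degree bookkeeping and the exactness checks are all as they should be.
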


\begin{rem}\label{trivial-but-useful}
For later use, observe that the subcomplex $(\ker \Bc, \bc)$ of $(M^\bullet,\bc)$ is a subcomplex of 
$\big(CC^\bullet(M), d_u \big)$ via the obvious inclusion $M^\bullet\subseteq M^\bullet[[u,u^{-1}]]$, whereas $(M^\bullet, \bc)$ is in general not.
\end{rem}

\begin{rem}
\label{lanciano}
The long exact sequence in \rmref{taylor-long} is of course Connes' long exact sequence (the {\em $SBI$-sequence}) in cohomological form, see, for example, \cite[\S2.4.4]{Lod:CH}. In this language, the periodicity operator $S$ is simply given by multiplication with $u$. Keeping in mind Definition \ref{brot}, observe furthermore that for a mixed (chain) complex $(N_\bullet,b,B)$, the short exact sequence \rmref{taylor-short} reads
$$
0\to CC^-_\bullet(N)[2]\xrightarrow{} CC^-_\bullet(N)\xrightarrow{} N_\bullet\to 0,
$$
which in turn induces the long exact sequence
\begin{equation}
\label{exact-sequence-negative-cyclic}
\cdots \to HC^-_{n+2}(N)\to HC^-_n(N)\xrightarrow{\pi} H_n(N)\xrightarrow{\beta} HC^-_{n+1}(N)\to \cdots,
\end{equation}
connecting the negative cyclic homology of $(N_\bullet,b,B)$ to the homology of $(N_\bullet,b)$.
\end{rem}

\begin{lemma}\label{connecting-one}
The connecting homomorphism $\beta \colon H^n(M)\to HC^{n-1}(M)$ in the long exact sequence \rmref{taylor-long} is given by $\beta[m]=[\Bc m]$.
\end{lemma}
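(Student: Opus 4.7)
The plan is to compute the connecting homomorphism directly by running through the standard zig-zag recipe supplied by the snake lemma applied to the short exact sequence \rmref{taylor-short}.

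First, I would pick a class $[m] \in H^n(M)$, represented by a cocycle $m \in M^n$ with $\bc m = 0$. Via the obvious inclusion $M^\bullet \subseteq M^\bullet[[u]]$ of Remark \ref{trivial-but-useful}, the element $m$ is a natural lift of itself under the evaluation map $\mathrm{ev}_0 \colon CC^n(M) \to M^n$. Applying the total differential $d_u = \bc + u\Bc$ to this lift gives
\begin{equation*}
d_u(m) = \bc m + u\,\Bc m = u\,\Bc m,
\end{equation*}
which lies in the image of the multiplication-by-$u$ map, as prescribed by the snake lemma procedure.

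Next, I would identify the unique preimage of $u\,\Bc m$ under $u \colon CC^\bullet(M)[-2] \to CC^\bullet(M)$: this is simply $\Bc m$, viewed as an element of $M^{n-1} \subseteq CC^\bullet(M)[-2]^{n+1} = CC^{n-1}(M)$. A quick check of degrees confirms the bookkeeping: $\Bc m$ has cohomological degree $n-1$ in $M^\bullet$, while the shift $[-2]$ places it in degree $n-1$ of $CC^\bullet(M)$, as required for $\beta$ to land in $HC^{n-1}(M)$.

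Finally, I would verify that $\Bc m$ is a $d_u$-cocycle, which is immediate since $\bc \Bc m = -\Bc \bc m = 0$ by $[\Bc,\bc]=0$ and $\bc m = 0$, together with $\Bc^2 = 0$, so that $d_u(\Bc m) = \bc \Bc m + u\,\Bc^2 m = 0$. Independence of the choice of representative $m$ and well-definedness of the class $[\Bc m] \in HC^{n-1}(M)$ follow from the fact that $\Bc$ descends to cohomology, as observed in Remark \ref{rem.Bc-in-cohomology}. There is no real obstacle here; the only thing to pay attention to is the degree shift convention inherent in the sequence \rmref{taylor-short}.
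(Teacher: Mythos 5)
Your proposal is correct and follows exactly the same zig-zag argument as the paper: lift $m$ to the constant power series, apply $d_u$ to get $u\Bc m$, and take the preimage $\Bc m$ under multiplication by $u$. The extra checks you include (that $\Bc m$ is a $d_u$-cocycle and that the class is independent of the representative) are routine and consistent with the paper's treatment.
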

\begin{proof}
Given a $\bc$-cohomology class $[m]$ in $H^n(M)$, consider its representative element $m$ in $M^n$, and choose a power series $x(u)$ in $CC^n(M)$ such that $x(0)=m$. An obvious choice is to take the constant power series $x(u)\equiv m$. We apply the differential $d_u$ to this power series getting the element 
\[
d_um=\bc m+u\Bc m=u\Bc m,
\]
where we used the fact that $m$ is a $\bc$-cocycle. Finally, we have to take the cohomology class of an element in $CC^{n-1}(M)$ which is mapped to $u\Bc m$ by multiplication with $u$, and this is obviously $\Bc m$. 
\end{proof}

\begin{lemma}
\label{connecting2}
We have a short exact sequence of complexes
\[
0\to M^\bullet\to CC^\bullet_-(M)\to CC^\bullet_-(M)[2]\to 0,
\]
where the first map is the inclusion of $M^\bullet$ as (equivalence classes of) constant Laurent series with coefficients in $M^\bullet$ and the second map is the multiplication by $u$. As a consequence, we have a long exact sequence in cohomology
\begin{equation}
\label{laurent-long}
\cdots \to  H^n(M)\xrightarrow{j} HC^n_-(M)\xrightarrow{S} HC^{n+2}_-(M)\xrightarrow{\beta} H^{n+1}(M)\to \cdots, 
\end{equation}
connecting negative cyclic cohomology of $(M^\bullet,\bc,\Bc)$ to the cohomology of $(M^\bullet,\bc)$.
\end{lemma}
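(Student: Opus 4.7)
The plan is to mimic the structure of Lemma \ref{connecting}, with $CC^\bullet_-(M)$ in place of $CC^\bullet(M)$ and the roles of the two maps swapped. First I would identify the quotient $CC^\bullet_-(M) = M^\bullet[[u,u^{-1}]]/uM^\bullet[[u]]$ canonically with $M^\bullet[[u^{-1}]]$ by choosing representatives of the form $x=\sum_{j \leq 0} m_j u^j$ with $m_j \in M^{n-2j}$ in degree $n$. A preliminary observation is that $uM^\bullet[[u]]$ is a subcomplex of the periodic cocyclic complex: it is stable under $\bc$ trivially, and under $u\Bc$ since $u\Bc(uM^\bullet[[u]]) \subseteq u^2 M^\bullet[[u]] \subseteq uM^\bullet[[u]]$, so that $d_u$ descends to the quotient.

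Next I would construct and verify the two chain maps. The inclusion $\iota \colon M^\bullet \hookrightarrow CC^\bullet_-(M)$ sending $m \in M^n$ to the constant Laurent series $mu^0$ is a chain map because $d_u m = \bc m + u \Bc m$, and the term $u \Bc m$ lies in $uM^\bullet[[u]]$, hence vanishes in the quotient, leaving $\bc m$. Multiplication by $u$ is $\K[u]$-linear on $M^\bullet[[u,u^{-1}]]$ and clearly preserves $uM^\bullet[[u]]$, so it descends to a degree-preserving map $CC^\bullet_-(M) \to CC^\bullet_-(M)[2]$; it commutes with $d_u$ by the $\K[u,u^{-1}]$-linearity of the latter.

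For exactness of the short sequence, the key computation is that if $x = \sum_{j \leq 0} m_j u^j$, then $ux = \sum_{j \leq 0} m_j u^{j+1}$, whose reduction modulo $uM^\bullet[[u]]$ is $\sum_{k \leq 0} m_{k-1} u^k$, vanishing iff $m_j = 0$ for all $j \leq -1$; therefore $\ker(u \,\cdot\,) = \iota(M^\bullet)$. Injectivity of $\iota$ is immediate. Surjectivity of multiplication by $u$ follows from an explicit preimage: given $y = \sum_{k \leq 0} n_k u^k$, set $x = \sum_{j \leq -1} n_{j+1} u^j$, so that $ux \equiv y$ modulo $uM^\bullet[[u]]$. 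The long exact sequence \rmref{laurent-long} is then the standard long exact sequence in cohomology associated to the short exact sequence of complexes.

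The main obstacle will be less computational than notational: to keep track of the quotient relation $uM^\bullet[[u]] \equiv 0$ simultaneously with the shift-and-truncate behaviour of multiplication by $u$. No sign subtleties arise, since $u$ has even degree $+2$. As a sanity check and for later use, one may compute the connecting homomorphism $\beta$ explicitly: for a cocycle $y = \sum_{k \leq 0} m_k u^k$, one lifts to $x = \sum_{j \leq -1} m_{j+1} u^j$, applies $d_u$, and uses the cocycle relations $\bc m_{-i} + \Bc m_{-i-1} = 0$ to telescope everything into $\beta[y] = [\Bc m_0]$, a natural $\Bc$-analogue of Lemma \ref{connecting-one}.
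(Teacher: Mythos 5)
Your proposal is correct and follows essentially the same route as the paper: both arguments identify $\ker(u)$ with the constant Laurent series $M^\bullet$ and deduce surjectivity of multiplication by $u$ (the paper from invertibility of $u$ on $CC^\bullet_{\mathrm{per}}(M)$, you via an explicit preimage, which amounts to the same thing). Your closing computation of $\beta[y]=[\Bc m_0]$ is also correct and reproduces what the paper records separately as Lemma \ref{B-returns}.
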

\begin{proof}
As multiplication by $u$ maps the subcomplex $uM^\bullet[[u]]$ into itself, it defines a morphism of complexes
\[
u\colon CC^\bullet_-(M)\to CC^\bullet_-(M)[2],
\]
which is clearly surjective as the morphism
\[
u\colon CC_{\mathrm{per}}^\bullet(M)\to CC_{\mathrm{per}}^\bullet(M)
\]
is an isomorphism. Therefore we have a short exact sequence of complexes
\[
0\to \ker(u)\to CC^\bullet_-(M)\xrightarrow{u} CC^\bullet_-(M)[2]\to 0,
\]
and we are left to determine the subcomplex $\ker(u)$. By definition, an element in $\ker(u)$ is represented by the constant term and  polar part of a Laurent series $f(u)$ in the variable $u$ (with coefficients in $M^\bullet$) such that $uf(u)$ is an element in $uM[[u]]$. This last condition implies that $f(u)$ is actually an element in $M[[u]]$ so it is (uniquely) represented by an element in $M^\bullet$ (corresponding to its constant term).
\end{proof}

\begin{lemma}
\label{B-returns}
The connecting homomorphism $\beta\colon HC^{n}_-(M)\to H^{n-1}(M)$ from the long exact sequence \rmref{laurent-long} is given by $\beta [f]= [\Bc f_0]$, where $f_0$ is the coefficient of $u^0$ in the Laurent series $f$.
In particular, the composition 
\begin{equation}
\label{B-davvero-returns}
H^\bullet(M)\xrightarrow{j}HC^\bullet_-(M)\xrightarrow{\beta}H^{\bullet-1}(M)
\end{equation}
on cohomology is the operator $\Bc\colon H^\bullet(M)\to H^{\bullet-1}(M)$ from Remark \ref{rem.Bc-in-cohomology}.
\end{lemma}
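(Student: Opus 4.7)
The plan is to compute the connecting homomorphism $\beta$ by a direct application of the snake lemma to the short exact sequence of Lemma \ref{connecting2}, and then specialise to the image of $j$. Throughout, I will use that a representative of an element of $CC^\bullet_-(M)=M^\bullet[[u,u^{-1}]]/uM^\bullet[[u]]$ can always be chosen without positive powers of $u$, and that the degree $+2$ of $u$ forces such a representative in degree $n$ to have the form $f=\sum_{k\geq 0}f_{-k}u^{-k}$ with $f_{-k}\in M^{n+2k}$.

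First, I would unpack the cocycle condition. Applied to the above $f$, the differential $d_u=\bc+u\Bc$ gives
\[
d_u f=\Bc(f_0)\,u+\sum_{k\geq 0}\bigl(\bc(f_{-k})+\Bc(f_{-k-1})\bigr)u^{-k}
\]
in the periodic complex, and vanishing in $CC^\bullet_-(M)$ means that all non-positive powers of $u$ cancel, i.e.\ $\bc(f_{-k})+\Bc(f_{-k-1})=0$ for every $k\geq 0$. Note that the $u^{+1}$-coefficient $\Bc(f_0)$ is unconstrained; it is precisely this term that will produce $\beta[f]$.

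Second, I would perform the snake lemma step. Thinking of $f$ as a degree-$(n-2)$ element of the shifted quotient $CC^\bullet_-(M)[2]$, lift it across the surjection $u$ by setting $b:=\sum_{k\geq 0}f_{-k}u^{-k-1}\in CC^{n-2}_-(M)$, so that $ub\equiv f$ modulo $uM^\bullet[[u]]$. A direct calculation, using the cocycle relations obtained in the previous step, telescopes $d_u b$ to exactly $\Bc(f_0)$, which lies in the subcomplex $M^\bullet\hookrightarrow CC^\bullet_-(M)$ of constants. By definition of the connecting homomorphism, $\beta[f]=[\Bc f_0]\in H^{n-1}(M)$, which is the first claim.

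Third, for the assertion about the composition \eqref{B-davvero-returns}, I would observe that for a $\bc$-cocycle $m\in M^n$, the constant Laurent series $m$ satisfies $d_u m=u\Bc m\in uM^\bullet[[u]]$, so it defines a class $j[m]\in HC^n_-(M)$ with $u^0$-coefficient equal to $m$ itself. Applying the formula from the previous step yields $\beta(j[m])=[\Bc m]$, which agrees with the operator on cohomology described in Remark \ref{rem.Bc-in-cohomology}. There is no real obstacle beyond the careful bookkeeping of the degree shifts and the choice of a representative with no positive $u$-powers; once these are in place, the computation is essentially automatic.
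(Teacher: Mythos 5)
Your proposal is correct and follows essentially the same route as the paper: choose a representative of the class in $CC^n_-(M)$ with only non-positive powers of $u$, lift along multiplication by $u$ via $u^{-1}f$, and observe that $d_u(u^{-1}f)=\Bc f_0$ lands in the constants $M^{n-1}$, whence $\beta[f]=[\Bc f_0]$; the second claim then follows by taking $f$ to be the constant series $m$. The only difference is presentational — you verify the vanishing of the non-positive $u$-coefficients termwise, while the paper records the single identity $d_uf=u\Bc f_0$ and divides by $u$ — but the argument is the same.
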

\begin{proof}
Given a cohomology class $[f]$ in $HC^{n}_-(M)$, let us pick a representative $f$ in $CC^{n}_-(M)$. We can choose $f$ to be a Laurent series with only constant term and  polar part such that $d_uf$ lies in $uM^\bullet[[u]]$. 
Writing
\[
f=\sum_{n=-\infty}^0 f_{-n} u^{n}
\]
and $
d_uf=\bc f+u\Bc f$, 
this means that $
d_uf-u\Bc f_0=0$, that is to say
\begin{equation}\label{B-appears}
d_uf=u\Bc f_0.
\end{equation}
To compute $\beta[f]$ we have to pick a preimage of $f$ in $CC^{n-2}_-(M)$ for the map given by the multiplication by $u$. Such a preimage is clearly given by the (equivalence class of) the Laurent series $u^{-1}f$. Apply the differential $d_u$ to this element to get
\[
d_u(u^{-1}f)=u^{-1}d_uf=\Bc f_0,
\]
by Eq.~\rmref{B-appears}. Now we only need to pick up an element in $M^{n-1}$ mapped to $\Bc f_0$ by the inclusion $M^{n-1}\to CC^{n-1}_-(M)$, which clearly is $\Bc f_0$ itself, and to take its cohomology class. 
\end{proof}

\section{Cartan calculi}
\label{cartan}

In this section, we introduce the notion 
of a Cartan-Gerstenhaber calculus up to homotopy in the spirit of the noncommutative differential calculus in the sense of Nest-Tamarkin-Tsygan \cite{NesTsy:OTCROAA, TamTsy:NCDCHBVAAFC}.

\begin{definition}
Let $(M^\bullet,\bc,\Bc)$ be a mixed cochain complex, and let $(\mathfrak{g}^\bullet,\delta,\{\cdot,\cdot\})$ be a dg-Lie algebra. A {\em homotopy pre-Cartan calculus} of $\mathfrak{g}^\bullet$ on $CC_{\mathrm{per}}^\bullet(M)$ is the datum of a \emph{contraction operator} (or {\em cap product})
 \[
 \iota \colon \mathfrak{g}^\bullet\otimes M^\bullet\to M^\bullet[1],
 \]
of a \emph{Lie derivative}
\[
 \cL \colon \mathfrak{g}^\bullet\otimes M^\bullet\to M^\bullet,
\]
and of an operator
 \[
\cS \colon \mathfrak{g}^\bullet\otimes M^\bullet\to M^\bullet[-1]
 \]
such that
\begin{equation}
\label{dellera1}
\begin{cases}
\cL_f= [\Bc, \iota_f]+[\bc, \cS_f] + \cS_{\gd f},\\
[\bc, \iota_f]+\iota_{\gd f}=0,\\
[\Bc, \cS_f]=0,
\end{cases}
\end{equation}
when we look at $\iota_f$, $\cL_f$ and $\cS_f$ as endomorphisms of the graded vector space $M^\bullet$.
\end{definition}
\begin{lemma}
Extend $\iota_f,\cL_f$ and $\cS_f$ by $\K[u,u^{-1}]$-linearity to operators on $CC_{\mathrm{per}}^\bullet(M)$, and let 
\[
\cI \colon \mathfrak{g}^\bullet\otimes CC_{\mathrm{per}}^\bullet(M)\to CC_{\mathrm{per}}^\bullet(M)[1]
\]
be the operator $\cI= \iota + u \cS$. Then 
\begin{equation}
\label{schonlos2}
u\cL_f= [d_u,\cI_f]  + \cI_{\gd f}.
\end{equation}
Moreover, $\cI$ and $\cL$ induce, by restriction or by induced action on the quotient, operators (which we will denote by the same symbols) obeying relation \rmref{schonlos2} on $CC^\bullet(M)$ and $CC^\bullet_-(M)$.
\end{lemma}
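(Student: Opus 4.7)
The plan is a direct computation expanding the graded commutator $[d_u,\cI_f]$ via bilinearity in the summands of $d_u=\bc+u\Bc$ and $\cI_f=\iota_f+u\cS_f$, and then invoking the three defining relations \rmref{dellera1} of a homotopy pre-Cartan calculus to collapse the four resulting terms into $u\cL_f-\cI_{\gd f}$. Explicitly, since the $u$-extension is $\K[u,u^{-1}]$-linear, one has
\[
[d_u,\cI_f]=[\bc,\iota_f]+u\bigl([\bc,\cS_f]+[\Bc,\iota_f]\bigr)+u^2[\Bc,\cS_f].
\]
Substituting the second identity of \rmref{dellera1} into the first term gives $-\iota_{\gd f}$; the first identity of \rmref{dellera1} rewrites $[\bc,\cS_f]+[\Bc,\iota_f]$ as $\cL_f-\cS_{\gd f}$; and the third identity kills the $u^2$ term. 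Collecting yields $[d_u,\cI_f]=u\cL_f-\iota_{\gd f}-u\cS_{\gd f}=u\cL_f-\cI_{\gd f}$, which is the claimed Cartan homotopy formula on $CC^\bullet_{\mathrm{per}}(M)$.

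For the descent to $CC^\bullet(M)$ and $CC^\bullet_-(M)$, the key observation is that $\iota_f$, $\cL_f$ and $\cS_f$ act ``vertically'', i.e.\ they preserve the $M^\bullet$ direction and are $\K[u,u^{-1}]$-linear, so they commute with multiplication by $u$. Hence $\cI_f=\iota_f+u\cS_f$ and $\cL_f$ both preserve the subcomplex $M^\bullet[[u]]\subset M^\bullet[[u,u^{-1}]]$, which gives induced operators on $CC^\bullet(M)$; similarly they preserve $uM^\bullet[[u]]$, so they descend to $CC^\bullet_-(M)$. The relation $u\cL_f=[d_u,\cI_f]+\cI_{\gd f}$ is an equality of operators on $CC^\bullet_{\mathrm{per}}(M)$, so it remains valid on the subcomplex and on the quotient.

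No step is a genuine obstacle: the identity is essentially the ``$u$-deformation'' packaging of the three relations \rmref{dellera1}, chosen precisely so that the bookkeeping works out. The only mild subtlety is a consistent sign and grading convention for the graded commutator on $CC^\bullet_{\mathrm{per}}(M)$, where $u$ has degree $+2$ and $\cS_f$ has degree one less than $\iota_f$, so that $u\cS_f$ has the same degree as $\iota_f$ and $\cI_f$ is a homogeneous degree $|f|+1$ operator, keeping $[d_u,\cI_f]$ unambiguous.
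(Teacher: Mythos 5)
Your proof is correct and follows the same route as the paper, which simply notes that Eq.~\rmref{schonlos2} is equivalent to the three relations \rmref{dellera1} (your expansion in powers of $u$ makes this explicit) and that $\cI$ and $\cL$ preserve the subcomplexes $M^\bullet[[u]]$ and $uM^\bullet[[u]]$. Nothing is missing.
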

\begin{proof}

It is immediate to check that the single Equation \rmref{schonlos2} is actually equivalent to the three Equations \rmref{dellera1}. As $\cI$ and $\cL$ preserve the subcomplexes $(M[[u]],d_u)$ and $(uM[[u]],d_u)$ of $CC_{\mathrm{per}}^\bullet(M)$, the second part of the statement follows.
\end{proof}

\begin{rem}\label{acts-trivially}
It follows from Eq.~\rmref{schonlos2} that $u\cL_f$ acts trivially  on $HC_{\mathrm{per}}^\bullet(M)$, on $HC^\bullet(M)$ and on $HC_{-}^\bullet(M)$. Since the multiplication by $u$ is invertible in $CC_{\mathrm{per}}^\bullet(M)$, also $\cL_f$ acts trivially  on $HC_{\mathrm{per}}^\bullet(M)$. However, since multiplication with $u$ is not invertible in $CC^\bullet(M)$ and on $CC_{-}^\bullet(M)$, the Lie derivative $\cL_f$ may act nontrivially on cyclic and negative cyclic cohomologies.
\end{rem}

For the evident similarity of the identity \rmref{schonlos2} that holds on the cyclic bicomplex with the classical Cartan formula, one might want to call the operator $\cI= \iota + u \cS$ as the {\em cyclic cap product}.

\begin{lemma}\label{lemma:is-a-morphsm-of-complexes}
The degree zero operator $\cL$ satisfies
\begin{equation}
\label{g-module-structure}
[d_u, \cL_f] - \cL_{\gd f}=0,
\end{equation}
and therefore defines a morphism of complexes $\cL\colon \mathfrak{g}^\bullet\to \mathrm{End}(CC_{\mathrm{per}}^\bullet(M)[n])$ for any $n\in \Z$. This induces morphisms of complexes $\cL\colon \mathfrak{g}^\bullet\to \mathrm{End}(CC^\bullet(M)[n])$ and on $\cL\colon \mathfrak{g}^\bullet\to \mathrm{End}(CC^\bullet_-(M)[n])$, for any $n\in \Z$.
\end{lemma}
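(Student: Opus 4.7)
The plan is to deduce identity \rmref{g-module-structure} directly from the homotopy Cartan formula \rmref{schonlos2} proved in the previous lemma, by applying the graded commutator $[d_u,\cdot]$ to both sides of $u\cL_f=[d_u,\cI_f]+\cI_{\gd f}$. Since all operators involved are $\K[u,u^{-1}]$-linear and $d_u$ commutes with multiplication by $u$, the left-hand side becomes $u[d_u,\cL_f]$. On the right-hand side, the term $[d_u,[d_u,\cI_f]]$ vanishes: this is the standard graded-Jacobi computation
\[
[d_u,[d_u,\cI_f]]=d_u^2\,\cI_f - (-1)^{|\cI_f|+1}\cI_f\,d_u^2=0,
\]
which uses only that $d_u$ is odd and $d_u^2=0$.

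Next, I apply the same homotopy Cartan formula \rmref{schonlos2} with $\gd f$ in place of $f$ to obtain $u\cL_{\gd f}=[d_u,\cI_{\gd f}]+\cI_{\gd^2 f}=[d_u,\cI_{\gd f}]$, since $\gd^2=0$. Combining these two computations gives $u[d_u,\cL_f]=u\cL_{\gd f}$, that is,
\[
u\bigl([d_u,\cL_f]-\cL_{\gd f}\bigr)=0.
\]
On the periodic complex $CC_{\mathrm{per}}^\bullet(M)=M^\bullet[[u,u^{-1}]]$ the operator of multiplication by $u$ is invertible, so cancelling $u$ yields identity \rmref{g-module-structure} on $CC_{\mathrm{per}}^\bullet(M)$.

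The identity $[d_u,\cL_f]=\cL_{\gd f}$ says precisely that the degree zero map $\cL_f\in\mathrm{End}(CC_{\mathrm{per}}^\bullet(M))$ depends on $f\in\mathfrak{g}^\bullet$ in a way compatible with the differentials, where the differential on the endomorphism complex is the graded commutator with $d_u$. Since shifting the target only renames the graded components but does not alter the differential, this gives simultaneously a morphism of complexes $\cL\colon \mathfrak{g}^\bullet\to \mathrm{End}(CC_{\mathrm{per}}^\bullet(M)[n])$ for every $n\in\Z$. For the two remaining assertions, one simply recalls from the previous lemma that $\cL_f$ and $d_u$ both preserve the subcomplex $M^\bullet[[u]]\subseteq M^\bullet[[u,u^{-1}]]$ and the subcomplex $uM^\bullet[[u]]$. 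Hence the identity restricts to $CC^\bullet(M)$ and descends to the quotient $CC^\bullet_-(M)=M^\bullet[[u,u^{-1}]]/uM^\bullet[[u]]$, giving the corresponding morphisms of complexes there.

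The argument is essentially formal, so there is no real obstacle; the only point requiring a moment of care is the cancellation of $u$, which is justified only because we first establish the identity on the periodic complex, where $u$ acts invertibly, before restricting or passing to the quotient.
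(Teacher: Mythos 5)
Your argument is correct and is essentially the paper's own proof: both apply $[d_u,\cdot]$ to the cyclic Cartan formula \rmref{schonlos2}, kill $[d_u,[d_u,\cI_f]]$ using $d_u^2=0$, identify the remaining term with $u\cL_{\gd f}$ via \rmref{schonlos2} applied to $\gd f$ together with $\gd^2=0$, and then cancel the invertible operator $u$ on the periodic complex before restricting to $CC^\bullet(M)$ and descending to $CC^\bullet_-(M)$. The only cosmetic difference is that the paper packages the vanishing of the double bracket as $\tfrac12[[d_u,d_u],\cI_f]=0$, which is the same fact you verify by hand.
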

\begin{proof}
We have
\begin{align*}
[d_u, u\cL_f] - u\cL_{\gd f}&=[d_u,[d_u,\cI_f] + \cI_{\delta f}] - [d_u,\cI_{\delta f}] - \cI_{\delta^2f}\\
&=\frac{1}{2}[[d_u,d_u],\cI_f] + [d_u,\cI_{\delta f}] - [d_u,\cI_{\delta f}] =0.
\end{align*}
Multiplying by $u^{-1}$ we find $[d_u, \cL_f] - \cL_{\gd f} = 0$.
\end{proof}
\begin{rem}
Equation \rmref{g-module-structure} is equivalent to the pair of equations
\[
\begin{cases}
[\bc, \cL_f] - \cL_{\gd f} = 0, \\
[\Bc, \cL_f]  = 0,
\end{cases}
\]
which can be easily directly derived from Eqs.~\rmref{dellera1}.
\end{rem}

\begin{rem}
 In particular, it follows from Lemma \ref{lemma:is-a-morphsm-of-complexes} 
that if $f$ is a cocycle in $\mathfrak{g}^\bullet$ and $\zeta$ is a cocycle in $CC_{\mathrm{per}}^\bullet(M)$  (resp., in $CC^\bullet(M)$, and in $CC^\bullet_-(M)$),  then $\cL_f\zeta$ is a cocycle in $CC_{\mathrm{per}}^\bullet(M)$  (resp., in $CC^\bullet(M)$, and in $CC^\bullet_-(M)$).
\end{rem}

\begin{definition}
\label{ministeroperibenieleattivitaculturali}
A \emph{homotopy Cartan calculus} on $CC^\bullet_{\mathrm{per}}(M)$ is a homotopy pre-Cartan calculus $(\mathfrak{g}^\bullet,\iota,\cL,\cS)$ on $CC_{\mathrm{per}}^\bullet(M)$ endowed with a {\em Gelfan'd-Daletski\u\i-Tsygan homotopy}, that is, with an operator
\[
\cT: \mathfrak{g}^\bullet \otimes \mathfrak{g}^\bullet \otimes M^\bullet \to M^\bullet
\]
such that

\begin{equation}
\label{nunja}
[\cI_f, \cL_g] - \cI_{\{f,g\}} = [d_u, \cT(f,g)] - \cT(\gd f, g) -(-1)^f \cT(f, \delta g), 
\end{equation}
where $\cT$ has been extended $\K[u,u^{-1}]$-linearly to $CC^\bullet_{\mathrm{per}}(M)$.
\end{definition}

\begin{rem}
Equation \eqref{nunja} is equivalent to 
\begin{equation}
\label{panem}
\begin{cases}
\!\!\!
\begin{array}{lcl}
[\iota_f, \cL_g] - \iota_{\{f, g\}} &=& [\bc, \cT(f,g)] - \cT(\gd f, g) -(-1)^f \cT(f, \delta g), \\
[\cS_f, \cL_g] - \cS_{\{f,g\}} &=& [\Bc, \cT(f,g)], 
\end{array}
\end{cases}
\end{equation}
\end{rem}

\begin{rem}
The operator $\cT$ and the identity \rmref{nunja} implicitly appeared in the context of associative algebras for the first time ({\em cf.} Example \ref{algebras}) in \cite{GelDalTsy:OAVONCDG}; hence its name. We will see more general examples arising from operads in \S\ref{pling}--\ref{plong}.
\end{rem}

\begin{lemma}
\label{lemma:is-a-morphism-of-lie-algebras}
Looking at $\cL$ as a map $\cL\colon \mathfrak{g}\to \mathrm{End}(M[[u,u^{-1}]])$, we have
\begin{equation}
\label{is-a-morphism-of-lie-algebras}
\cL_{\{f,g\}}=[\cL_f,\cL_g].
\end{equation}
\end{lemma}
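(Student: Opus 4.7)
The plan is to prove the identity on the periodic complex $CC_{\mathrm{per}}^\bullet(M)$, where multiplication by $u$ is invertible, and then deduce it by cancellation. The key tool is the Cartan homotopy formula $u\cL_f=[d_u,\cI_f]+\cI_{\delta f}$ together with the Gel'fand--Daletski\u\i--Tsygan homotopy \eqref{nunja}, which can be rewritten as
\[
\cI_{\{f,g\}}=[\cI_f,\cL_g]-[d_u,\cT(f,g)]+\cT(\delta f,g)+(-1)^{f}\cT(f,\delta g).
\]
I would start from
\[
u\cL_{\{f,g\}}=[d_u,\cI_{\{f,g\}}]+\cI_{\delta\{f,g\}}
\]
and substitute the above expression for $\cI_{\{f,g\}}$, aiming to show that the right-hand side equals $u[\cL_f,\cL_g]$.

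First I would apply $[d_u,\cdot]$ to the substituted expression. The $[d_u,[d_u,\cT(f,g)]]$ term vanishes by $d_u^2=0$ (via the graded Jacobi identity). For the main term $[d_u,[\cI_f,\cL_g]]$, the graded Jacobi identity together with Lemma \ref{lemma:is-a-morphsm-of-complexes} (giving $[d_u,\cL_g]=\cL_{\delta g}$) and the Cartan homotopy formula (giving $[d_u,\cI_f]=u\cL_f-\cI_{\delta f}$) yields
\[
[d_u,[\cI_f,\cL_g]]=u[\cL_f,\cL_g]-[\cI_{\delta f},\cL_g]+(-1)^{f+1}[\cI_f,\cL_{\delta g}],
\]
since $|\cI_f|=|f|+1$. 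Now I would apply the Gel'fand--Daletski\u\i--Tsygan homotopy a second time to both $[\cI_{\delta f},\cL_g]$ and $[\cI_f,\cL_{\delta g}]$, using $\delta^2=0$ to simplify. The two resulting $\cT(\delta f,\delta g)$ terms have opposite signs and cancel.

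Collecting everything, the $[d_u,\cT(\delta f,g)]$ and $(-1)^f[d_u,\cT(f,\delta g)]$ pieces produced by applying $d_u$ to the GDT correction terms exactly cancel the same terms arising from the second application of the homotopy. What is left is
\[
[d_u,\cI_{\{f,g\}}]=u[\cL_f,\cL_g]-\cI_{\{\delta f,g\}}-(-1)^{f}\cI_{\{f,\delta g\}}.
\]
Finally, the graded Leibniz rule $\delta\{f,g\}=\{\delta f,g\}+(-1)^{f}\{f,\delta g\}$ for the dg-Lie algebra $\mathfrak{g}^\bullet$ gives $\cI_{\delta\{f,g\}}=\cI_{\{\delta f,g\}}+(-1)^{f}\cI_{\{f,\delta g\}}$, which cancels the remaining $\cI$-terms and leaves $u\cL_{\{f,g\}}=u[\cL_f,\cL_g]$. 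Since $u$ is invertible on $CC_{\mathrm{per}}^\bullet(M)=M^\bullet[[u,u^{-1}]]$, the identity \eqref{is-a-morphism-of-lie-algebras} follows.

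The only real obstacle is the sign bookkeeping; conceptually the proof is a routine juggling of Jacobi identities and a double application of the homotopy formula, with the role of $\cT$ being solely to provide the correction terms that cancel in pairs. No step requires any new structure beyond what is already available in Definition \ref{ministeroperibenieleattivitaculturali}.
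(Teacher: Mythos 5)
Your argument is correct and is essentially identical to the paper's own proof: both apply $[d_u,\cdot]$ to the Gel'fand--Daletski\u\i--Tsygan identity, rewrite $[d_u,\cI_f]$ and $[d_u,\cL_g]$ via Eq.~\rmref{schonlos2} and Lemma \ref{lemma:is-a-morphsm-of-complexes}, apply the homotopy a second time to $[\cI_{\delta f},\cL_g]$ and $[\cI_f,\cL_{\delta g}]$ so that the $\cT$-terms cancel in pairs, and invoke the Leibniz rule for $\delta$ on the bracket before dividing by $u$. The sign bookkeeping you describe (including the cancellation of the two $\cT(\delta f,\delta g)$ terms) checks out against the paper's computation.
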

\begin{proof}
Applying $[d_u,-]$ to both sides of Eq.~\rmref{nunja},
we get
\begin{align*}
[d_u,\cI_{\{f,g\}}]&=[d_u,[\cI_f, \cL_g]] -  [d_u,[d_u, \cT(f,g)]] + [d_u,\cT(\gd f, g)] +(-1)^f[d_u, \cT(f, \delta g)]\\
&=[[d_u,\cI_f], \cL_g]-(-1)^f [\cI_f,[d_u,\cL_g]] + [d_u,\cT(\gd f, g)]
\\
&
\qqquad\qqquad
 +(-1)^f[d_u, \cT(f, \delta g)].
\end{align*}
By means of \rmref{schonlos2} and \rmref{g-module-structure}, we rewrite the above identity as
\begin{align*}
u\cL_{\{f,g\}}- \cI_{\gd \{f,g\}}&=[u\cL_{f}-\cI_{\gd f},\cL_g]-(-1)^f [\cI_f,\cL_{\gd g}]+ [d_u,\cT(\gd f, g)] 
\\
&
\qqquad\qqquad
+(-1)^f[d_u, \cT(f, \delta g)]
\\
&=
u[\cL_f,\cL_g]-[\cI_{\gd f},\cL_g]-(-1)^f [\cI_f,\cL_{\gd g}]+ [d_u,\cT(\gd f, g)] 
\\
&
\qqquad\qqquad
+(-1)^f[d_u, \cT(f, \delta g)].
\end{align*}
By the Gelfan'd-Daletski\u\i-Tsygan homotopy \rmref{nunja}, we have
\begin{equation*}
\begin{split}
[\cI_{\gd f},\cL_g]&=\cI_{\{\gd f,g\}} +[d_u, \cT(\gd f,g)] - \cT(\gd^2 f, g) +(-1)^f \cT(\gd f, \delta g)
\\
&=\cI_{\{\gd f,g\}} + [d_u, \cT(\gd f,g)] +(-1)^f \cT(\gd f, \delta g)
\end{split}
\end{equation*}
and
\begin{equation*}
\begin{split}
[\cI_f, \cL_{\gd g}]&=\cI_{\{f,\gd g\}} + [d_u, \cT(f,\gd g)] - \cT(\gd f, \gd g) -(-1)^f \cT(f, \delta^2 g)
\\
&=\cI_{\{f,\gd g\}} + [d_u, \cT(f,\gd g)] - \cT(\gd f, \gd g).
\end{split}
\end{equation*}
Hence, we get
\begin{align*}
-[\cI_{\gd f},\cL_g]&-(-1)^f [\cI_f,\cL_{\gd g}]+ [d_u,\cT(\gd f, g)] +(-1)^f[d_u, \cT(f, \delta g)] \\
&=-\cI_{\{\gd f,g\}}  -(-1)^f\cI_{\{f,\gd g\}} \\
&= -\cI_{\gd\{f,g\}},
\end{align*}
and so finally
\[
u\cL_{\{f,g\}}=u[\cL_f,\cL_g].
\]
Since multiplication by $u$ is an automorphism of $M[[u,u^{-1}]]$, this gives $\cL_{\{f,g\}}=[\cL_f,\cL_g]$. 
\end{proof}

From Lemmata \ref{lemma:is-a-morphsm-of-complexes} \& \ref{lemma:is-a-morphism-of-lie-algebras} together, we obtain the following:

\begin{corollary}
The Lie derivative $\cL$ defines a $\mathfrak{g}^\bullet$-dg-module structure on $CC^\bullet_{\mathrm{per}}(M)[n]$ for any $n\in \Z$. This induces a $\mathfrak{g}^\bullet$-dg-module structure both on $CC^\bullet(M)[n]$ as well as on $CC^\bullet_-(M)[n]$, for any $n\in \Z$.
\end{corollary}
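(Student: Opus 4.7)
The plan is to unpack the definition of a $\mathfrak{g}^\bullet$-dg-module structure and observe that the statement is an immediate assembly of the two preceding lemmas. Recall that giving a $\mathfrak{g}^\bullet$-dg-module structure on a cochain complex $(V^\bullet,D)$ amounts exactly to specifying a morphism of dg Lie algebras $\mathfrak{g}^\bullet\to \mathrm{End}(V^\bullet)$, where $\mathrm{End}(V^\bullet)$ carries the graded commutator as bracket and $[D,-]$ as differential. So the task reduces to exhibiting such a morphism in each of the three cases and showing that the shift by $n$ is harmless.

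First, I would note that Lemma \ref{lemma:is-a-morphsm-of-complexes} gives exactly
\[
[d_u,\cL_f]=\cL_{\delta f},
\]
which is precisely the statement that $\cL\colon \mathfrak{g}^\bullet\to \mathrm{End}(CC^\bullet_{\mathrm{per}}(M))$ is a degree zero chain map with respect to $\delta$ on the source and $[d_u,-]$ on the target. Second, Lemma \ref{lemma:is-a-morphism-of-lie-algebras} gives
\[
\cL_{\{f,g\}}=[\cL_f,\cL_g],
\]
which is the compatibility of $\cL$ with the Lie brackets. Together, these two identities say that $\cL$ is a morphism of dg Lie algebras, hence defines a $\mathfrak{g}^\bullet$-dg-module structure on $CC^\bullet_{\mathrm{per}}(M)$.

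For the shifted complexes $CC^\bullet_{\mathrm{per}}(M)[n]$, I would observe that as a graded vector space the endomorphism algebra is unchanged, only its internal grading is shifted by $0$; the differential $[d_u,-]$ and the graded commutator remain the same operators. Thus the very same $\cL$ provides a $\mathfrak{g}^\bullet$-dg-module structure on $CC^\bullet_{\mathrm{per}}(M)[n]$ for every $n\in\Z$.

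Finally, for the cyclic and negative cyclic cases, I would invoke the remark already made after Eq.~\rmref{schonlos2}: the operator $\cL_f$ preserves the subcomplex $M^\bullet[[u]]$ and therefore induces an operator on the quotient $M^\bullet[[u,u^{-1}]]/uM^\bullet[[u]]$. Both identities $[d_u,\cL_f]=\cL_{\delta f}$ and $\cL_{\{f,g\}}=[\cL_f,\cL_g]$ are preserved under restriction to subcomplexes and passage to quotient complexes, so the same reasoning yields $\mathfrak{g}^\bullet$-dg-module structures on $CC^\bullet(M)[n]$ and $CC^\bullet_-(M)[n]$ for every $n\in\Z$. There is no serious obstacle here: the substantive content is entirely in Lemmata \ref{lemma:is-a-morphsm-of-complexes} and \ref{lemma:is-a-morphism-of-lie-algebras}, and the corollary is their packaging.
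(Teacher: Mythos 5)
Your argument is correct and coincides with the paper's: the corollary is stated there as an immediate consequence of Lemmata \ref{lemma:is-a-morphsm-of-complexes} and \ref{lemma:is-a-morphism-of-lie-algebras}, exactly as you package it, with the passage to $CC^\bullet(M)$ and $CC^\bullet_-(M)$ justified by $\cL$ preserving the subcomplex $M^\bullet[[u]]$. Nothing is missing.
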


In view of Lemmata \ref{connecting} \& \ref{connecting2}, we will be mainly interested in the case $n=2$ and $n=-2$.

\begin{example}
\label{forms'n'fields}
For a smooth manifold $P$, consider the dg-Lie algebra $(\cX(P), 0, [.,.]_{\mathrm{\scriptscriptstyle{SN}}})$ of smooth multivector fields equipped with the Schouten-Nijenhuis bracket, along with the mixed complex $(\gO(P), 0, d_{\mathrm{\scriptscriptstyle{dR}}})$ of differential forms equipped with the de Rham differential $d = d_{\mathrm{\scriptscriptstyle{dR}}}$. In this case, $\iota$ and $\cL$ are the customary contraction resp.\ Lie derivative of forms along (multivector) fields, whereas $\cS$ and $\cT$ can be chosen almost arbitrarily as the chain differential vanishes: for example, take $\cS=\cT=0$. Eqs.~\rmref{dellera1}--\rmref{nunja} then become the well-known identities
\begin{equation}
\label{forms}
\cL = [\iota, d], \quad [d,\cL] = 0, \quad [\cL, \iota] = \iota_{[.,.]_{\scriptscriptstyle{SN}}}, \quad \cL_{[.,.]_{\scriptscriptstyle{SN}}} = [\cL, \cL]
\end{equation}
from differential (or algebraic) geometry. 
Dually, the dg-Lie algebra $(\cX(P), 0, [.,.]_{\mathrm{\scriptscriptstyle{SN}}})$ also acts on itself, or rather on $(\cX(P), 0, d_{\mathrm{\scriptscriptstyle{CE}}})$, where the differential 
$d_{\mathrm{\scriptscriptstyle{CE}}}$ is given by the customary Chevalley-Eilenberg differential that defines Lie algebra homology. Somewhat less present in the literature, defining $\iota_X(Y) = X \wedge Y$ for two multivector fields $X$ and $Y$, one obtains formulae analogous to those in \rmref{forms} when using the Lie derivative on multivector fields along multivector fields themselves.
We will, however, see more interesting examples in the forthcoming sections.
\end{example}

\begin{example}
\label{algebras}
Let $A$ be an associative algebra over a commutative ring $k$, and consider the mixed complex $(\overline{C}_\bull(A, A), b, B)$ of (normalised) Hochschild chains $\overline{C}_n(A,A) := A \otimes \overline{A}^{\otimes n}$ with Hochschild boundary $b$ and Connes' cyclic coboundary $B$, that is, 
\begin{footnotesize}
\begin{eqnarray*}
b(a_0, \ldots, a_n) &\!\!\!\!\!=&\!\!\!\!\! \sum^{n-1}_{i=0} (-1)^{i}  
(a_0, \ldots, a_{i} a_{i+1}, \ldots, a_n) + (-1)^{n}  (a_{n} a_0 , a_1, \ldots, a_{n-1}),
\\
B(a_0, \ldots, a_n)  &\!\!\!\!\!=&\!\!\!\!\! \sum^n_{i=0} (-1)^{in} (1, a_{i+1}, \ldots, a_n, a_0, \ldots, a_i),
\end{eqnarray*}
\end{footnotesize}
see, for example, \cite[\S2.5.10]{Lod:CH}, where
for compactness in notation, we have written $(a_0, \ldots, a_n) := a_0 \otimes \cdots \otimes a_n$ for a (normalised) Hochschild chain. 
Furthermore, let $\gvf, \psi \in \overline{C}^\bull(A,A)$ be (normalised) Hochschild cochains of degree $p$ and $q$, respectively (that is, elements in $\Hom_k(A^{\otimes \bull}, A)$ that vanish on elements $(a_1, \ldots, a_n)$ if one of the $a_i$ is equal to $1$). Then the formulae
\begin{footnotesize}
\begin{eqnarray*}
 \iota_{\varphi}(a_0, \ldots, a_n) 
&\!\!\!\!\!=&\!\!\!\!\! \big(a_0 {\varphi}(a_1, \ldots, a_p), a_{p+1}, \ldots,  a_n\big), 
\\
\cL_{\varphi}(a_0, \ldots, a_n)  
&\!\!\!\!\!=&\!\!\!\!\! 
\nonumber
\sum^{n-p+1}_{i=1} (-1)^{(p-1)(i-1)}  
\big(a_0, \ldots, a_{i-1}, \varphi(a_{i}, \ldots, a_{i+p-1}), a_{i+p}, \ldots, a_n\big) \\
&& \!\!\!\!\!\!\!\!\!\! + \!\!
\sum^{p}_{i=1} (-1)^{n(i-1)+(p-1)}  
\big(\varphi(a_{n-i+2}, \ldots, a_n, a_0, \ldots, a_{i+p-2}), a_{i+p-1}, \ldots, a_{n-i+1}\big), 
\\
\cS_{\varphi}(a_0, \ldots, a_n) 
&\!\!\!\!\!=&\!\!\!\!\! \sum^{n-p+1}_{j=1} \sum^{n-p+1}_{i=j} (-1)^{n(j-1)+(p-1)(i-1)} 
\big(1_\ahha, a_{n-j+2}, \ldots, a_n, a_0, \ldots, a_{i-1}, 
\\
&& 
\qquad\qquad\qquad\qquad\qquad\qquad\qquad\qquad
\gvf(a_{i}, \ldots, a_{i+p-1}), a_{i+p}, \ldots, a_{n-j+1}\big),
\\
\cT(\varphi, \psi)(a_0, \ldots, a_n) 
&\!\!\!\!\!=&\!\!\!\!\!   \sum^{p-1}_{j=1}  \sum^{p-1}_{i=j} (-1)^{n(j-1) + (q-1)(i-j) + p}
\big(\gvf(a_{n-j+2}, \ldots, a_n, a_0, a_1, \ldots, a_{p-i-1}, 
\\
&& 
\!\!
\psi(a_{p-i}, \ldots, a_{p+q-i-1}), a_{p+q-i} , \ldots, a_{p+q-1-j}), 
a_{p+q-j}, \ldots, a_n\big)
\end{eqnarray*}
\end{footnotesize}
define a homotopy Cartan calculus in the sense of Def.~\ref{ministeroperibenieleattivitaculturali}, when considering  $(C_\bull(A, A), b, B)$ as a mixed (cochain) complex by passing to negative degrees as in Example \ref{ancorauno}, resp.\ Eq.~\rmref{trimalchio}. These formulae explicitly appeared, with the exception of $\cT$, in \cite{NesTsy:OTCROAA}. The corresponding formulae for the case of $A_\gs$, that is, where the coefficient module $A$ as an $A$-bimodule is twisted by a {\em Nakayama automorphism} $\gs$ can be found in \cite[\S7.2]{KowKra:BVSOEAT}. In the case of Frobenius or (twisted) Calabi-Yau algebras
({\em cf.}~Example \ref{calabibabi2}), one can even define a homotopy calculus structure of Hochschild cochains over themselves: the main difficulty here consists in finding a cyclic operator on $C^\bull(A,A)$ which does not always exist but does so if $A$ is what is called an anti Yetter-Drinfel'd contramodule over $\Ae = A \otimes A^\op$, see \cite{Kow:BVASOCAPB} for a treatment. The corresponding calculus structure can then be obtained by the specific form of the cyclic operator (see {\em op.~cit.}, Eq.~(6.7), for the explicit expression depending on the {\em contraaction}) and of the operators in Theorem \ref{nochkeinname}, see below.
\end{example}

It is immediate from the definition that a homotopy Cartan calculus on $CC^\bullet(M)$ (resp.\ $CC^\bullet_-(M)$) induces a Cartan calculus (with zero homotopy) of the cohomology of $\mathfrak{g}^\bullet$ on the (negative) cyclic cohomology of $M^\bullet$. 

\begin{dfn}
Let $(\mathfrak{g}^\bullet, \cL, \cI, \cT)$ be a homotopy Cartan calculus on $CC^\bullet_{\rm per}(M)$, and let $\mathcal{G}^\bullet_\g := \g^\bullet[-1]$. The datum of a \emph{$\cL$-homotopy Gerstenhaber algebra} structure on $\mathcal{G}^\bullet_\g$ is the datum of a differential graded associative (not necessarily commutative) algebra structure $(\mathcal{G}^\bullet_\g,\delta[-1],\smallsmile)$ such that  
\[
\iota\colon \mathcal{G}_\g\to \mathrm{End}(M^\bullet)
\]
is a morphism of dga's, that is to say, it satisfies 
\begin{equation}\label{morphism-of-dgas}
\iota_{f\smallsmile g}=\iota_f\,\iota_g,
\end{equation}
in addition to
$
[\iota_f,\bc]+\iota_{\gd f}=0
$
from \rmref{dellera1}.
We refer to the product $\smallsmile: \mathcal{G}^i_\g \otimes \mathcal{G}^j_\g \to \mathcal{G}^{i+j}_\g$ as the {\em cup product}. 
When this additional datum is given, we say that $(\mathfrak{g}^\bullet, \iota,\cL, \cS, \cT)$ defines a {\em homotopy Cartan-Gerstenhaber calculus} on $CC^\bullet_{\rm per}(M)$.
\end{dfn}

\begin{rem}
By {\em dgca} we will always mean a unital differential graded commutative algebra, and morphisms between these will be required to be unit preserving. In particular, $\iota$ will satisfy
\[
\iota_{1_{\mathcal{G}_\g}}=\mathrm{Id}_{M^\bullet},
\]
where $1_{\mathcal{G}_\g}$ is the unit element of ${\mathcal{G}_\g}$.
\end{rem}


%

\begin{rem}
In the literature,  $\mathcal{G}^\bullet_\g$ is usually required to be a Gerstenhaber algebra (at least up to homotopy), therefore it might come as a surprise that we do not ask for the compatibility between the cup product $\smallsmile$ and the Lie bracket $\{\cdot,\cdot\}$. The reason is that the morphism   $\cL\colon \mathfrak{g}^\bullet\to \mathrm{End}(CC_{\mathrm{per}}^\bullet(M)[n])$
actually does see $\mathcal{G}^\bullet_\g$ as a Gerstenhaber algebra up to homotopy. We will detail this in Corollaries \ref{I-will-need-you-as-well1} \& \ref{I-will-need-you-as-well2}.
\end{rem}

In the following proposition and the subsequent three corollaries, we are going to show that $\cL$ and $\cI$ satisfy the customary formulae for Cartan calculi, up to homotopy. We will correspondingly write $ a \simeq b$ to indicate that $a$ is homotopic to $b$.

\begin{prop}
For any homogeneous $f,g$ in  $\g^\bullet$, we have
\begin{equation*}
\cL_{f \smallsmile g} \simeq \cL_f \cI_g -(-1)^f \cI_f \cL_g.
\end{equation*}
More precisely,
let $\Phi_{f,g}:=\mathcal{S}_{f \smallsmile g}-\cS_f\cI_g-\cI_f\cS_g+u\cS_f\cS_g$. 
Then, 
\begin{equation}
\label{nochsowas2}
\cL_{f \smallsmile g} =\cL_f \cI_g -(-1)^f \cI_f \cL_g +[d_u, \Phi_{f,g}]+\Phi_{\gd f,g}-(-1)^f\Phi_{f,\gd g}.
\end{equation}
In particular, if $f$ and $g$ are cocycles, $\cL_{f \smallsmile g}$ acts as $\cL_f \cI_g -(-1)^f \cI_f \cL_g$ on $HC^\bullet_{\mathrm{per}}(M)$, on $HC^\bullet(M)$, and on $HC^\bullet_-(M)$.
\end{prop}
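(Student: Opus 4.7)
The strategy is to work at the $u$-scaled level, where the fundamental Cartan identity $u\cL_f = [d_u,\cI_f]+\cI_{\gd f}$ from Eq.~\rmref{schonlos2} can be applied to the product $f \smallsmile g$ and then iterated on its factors. The first step I would carry out is the purely algebraic identity
\begin{equation*}
\cI_{f \smallsmile g} = \cI_f\,\cI_g + u\,\Phi_{f,g}.
\end{equation*}
This follows directly by expanding $\cI = \iota + u\cS$ on both sides, using the dga morphism property $\iota_{f\smallsmile g}=\iota_f\iota_g$ from \rmref{morphism-of-dgas}, and then re-substituting $\iota = \cI - u\cS$ in the mixed terms so as to produce the definition of $\Phi_{f,g}$ exactly.

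Applying the Cartan identity \rmref{schonlos2} to $f \smallsmile g$ and plugging in the expansion above turns the problem into computing
\begin{equation*}
u\,\cL_{f \smallsmile g} = [d_u,\cI_f\cI_g] + u\,[d_u,\Phi_{f,g}] + \cI_{\gd(f\smallsmile g)}.
\end{equation*}
I would then open $[d_u,\cI_f\cI_g]$ via the graded Leibniz rule and reinvoke \rmref{schonlos2} separately for $f$ and $g$ in the form $[d_u,\cI_f]=u\cL_f-\cI_{\gd f}$ and $[d_u,\cI_g]=u\cL_g-\cI_{\gd g}$; in parallel, I would expand $\cI_{\gd(f\smallsmile g)}$ using the Leibniz rule for $\gd$ on the shifted dga $\mathcal{G}^\bullet_\g = \g^\bullet[-1]$ and re-apply the algebraic identity above to each of the two resulting summands. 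The key observation is that the residual terms $\cI_{\gd f}\cI_g$ and $\cI_f\cI_{\gd g}$ produced by opening $[d_u,\cI_f\cI_g]$ cancel exactly against the corresponding terms arising from the expansion of $\cI_{\gd(f \smallsmile g)}$.

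What survives is precisely $u$ times the right-hand side of \rmref{nochsowas2}; since multiplication by $u$ is an automorphism on $CC^\bullet_{\mathrm{per}}(M)$, one recovers \rmref{nochsowas2} there, and the descent to $CC^\bullet(M)$ and $CC^\bullet_-(M)$ proceeds exactly as in the proof of Lemma \ref{lemma:is-a-morphsm-of-complexes}, since each of $\cI,\cL,\Phi$ preserves both the subcomplex $(M^\bullet[[u]],d_u)$ and the quotient complex. For the ``In particular'' claim, setting $\gd f = 0 = \gd g$ kills the terms $\Phi_{\gd f, g}$ and $\Phi_{f,\gd g}$ outright, leaving only the coboundary $[d_u,\Phi_{f,g}]$ on the right, which acts trivially on cohomology in all three versions. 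The only non-routine aspect is sign bookkeeping: one must match the Koszul sign $(-1)^{|\cI_f|}=-(-1)^f$ in the graded Leibniz rule for $[d_u,\cdot]$ with the shifted-grading Leibniz rule for $\gd$ on $\mathcal{G}^\bullet_\g$, and it is precisely this matching that both yields the sign $-(-1)^f$ in front of $\cI_f\cL_g$ in \rmref{nochsowas2} and powers the cancellation of the $\cI_{\gd f}\cI_g$ and $\cI_f\cI_{\gd g}$ terms.
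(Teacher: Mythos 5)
Your argument is correct and follows the paper's proof essentially verbatim: the same preliminary identity $\cI_{f\smallsmile g}=\cI_f\,\cI_g+u\Phi_{f,g}$, the same application of \rmref{schonlos2} to $f\smallsmile g$ and then separately to $f$ and $g$ after a graded Leibniz expansion of $[d_u,\cI_f\cI_g]$, and the same division by $u$ followed by restriction/passage to the quotient. The only cosmetic difference is that you phrase the key step as a cancellation of $\cI_{\gd f}\cI_g$ and $\cI_f\cI_{\gd g}$ against the expansion of $\cI_{\gd(f\smallsmile g)}$, whereas the paper groups the same terms so as to reassemble $u\cL_f$ and $u\cL_g$; these are identical computations.
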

\begin{proof}
We work on $CC^\bullet_{\mathrm{per}}(M)$ first.
From $\iota_{f\smallsmile g}=\iota_f\,\iota_g$, we get
\begin{align*}
\cI_{f\smallsmile g}&=\iota_{f \smallsmile g}+u\cS_{f \smallsmile g}\\
&=\iota_f\, \iota_g+u\cS_{f \smallsmile g}\\
&=(\cI_f-u\cS_f)(\cI_g-u\cS_g)+u\cS_{f \smallsmile g}\\
&=\cI_f\, \cI_g+u(\cS_{f \smallsmile g}-\cS_f\cI_g-\cI_f\cS_g+u\cS_f\cS_g)\\
&=\cI_f\,\cI_g+u\Phi_{f,g}.
\end{align*}
From $\cI_{f\smallsmile g}=\cI_f\,\cI_g+u\Phi_{f,g}$, on the other hand, we get
\begin{align*}
u\cL_{f \smallsmile g}&=[ d_u,\cI_{f\smallsmile g}]  + \cI_{\gd (f\smallsmile g)}\\
&=[ d_u,\cI_{f\smallsmile g}]  + \cI_{\gd f\smallsmile g}-(-1)^f\cI_{f\smallsmile \gd  g}\\
&=[ d_u, \cI_f\, \cI_g] +u[ d_u,\Phi_{f,g}] + \cI_{\gd f}\,\cI_g-(-1)^f \cI_f\,\cI_{\delta g}+u\Phi_{\gd f,g}-(-1)^fu\Phi_{f,\gd g}\\
&=d_u\cI_f\, \cI_g -(-1)^{f+g}  \cI_f\, \cI_g d_u+ \cI_{\gd f}\,\cI_g -(-1)^f \cI_f\,\cI_{\delta g}+u([d_u,\Phi_{f,g}]
\\
&
\qqquad\qqquad
+\Phi_{\gd f,g}-(-1)^f\Phi_{f,\gd g})\\
&=d_u \cI_f\, \cI_g  +(-1)^f \cI_f\, d_u\cI_g  -(-1)^{f}(\cI_f\, d_u\cI_g+  (-1)^{g}\cI_f\, \cI_gd_u) + \cI_{\gd f}\,\cI_g 
\\
&
\qqquad\qqquad
-(-1)^f \cI_f\,\cI_{\delta g}+u([d_u, \Phi_{f,g}]+\Phi_{\gd f,g}-(-1)^f\Phi_{f,\gd g})\\
&=(d_u \cI_f\,   +(-1)^f \cI_f\, d_u+\cI_{\gd f})\cI_g -(-1)^{f}\cI_f(d_u\cI_g+  (-1)^{g} \cI_gd_u+\cI_{\delta g})
\\
&
\qqquad\qqquad
+u([d_u,\Phi_{f,g}]+\Phi_{\gd f,g}-(-1)^f\Phi_{f,\gd g})\\
&=u(\cL_f \cI_g -(-1)^f \cI_f \cL_g +[d_u,\Phi_{f,g}]+\Phi_{\gd f,g}-(-1)^f\Phi_{f,\gd g}).
\end{align*}
As multiplication by $u$ is an invertible operator on $CC^\bullet_{\mathrm{per}}(M)$, this concludes the proof of identity \rmref{nochsowas2} on the periodic cocyclic complex.
As $\cI$, $\cL$ and $\Phi$ preserve the subcomplexes $(M[[u]],d_u)$ and $(uM[[u]],d_u)$ of $CC_{\mathrm{per}}^\bullet(M)$, identity \rmref{nochsowas2} is also an identity on the subcomplex 
$CC^\bullet(M)$ and on the quotient $CC_{-}^\bullet(M)$ of $CC_{\mathrm{per}}^\bullet(M)$.
The second part of the statement is immediate. 
\end{proof}

\begin{corollary}
\label{I-will-need-you}
For any homogeneous $f, g$ in $\g^\bullet$,
one has
\begin{align*}
\cL_{f \smallsmile g} \simeq \cL_f \iota_g -(-1)^f \iota_f \cL_g. 
\end{align*}
More precisely,
\begin{align*}
\cL_{f \smallsmile g} &=\cL_f \iota_g -(-1)^f \iota_f \cL_g +[\bc,S_{f \smallsmile g}-\cS_f\iota_g-\iota_f\cS_g ]+S_{\gd f \smallsmile g}-\cS_{\gd f}\iota_g-\iota_{\gd f}\cS_g
\\
&
\qqquad\qqquad
-(-1)^f(S_{f \smallsmile \gd g}-\cS_f\iota_{\gd g}-\iota_f\cS_{\gd g}).
\end{align*}
\end{corollary}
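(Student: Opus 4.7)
The plan is to derive this identity by extracting the coefficient of $u^0$ from the formula
\[
\cL_{f\smallsmile g}=\cL_f\cI_g-(-1)^f\cI_f\cL_g+[d_u,\Phi_{f,g}]+\Phi_{\gd f,g}-(-1)^f\Phi_{f,\gd g}
\]
provided by the preceding proposition. Since every operator involved is $\K[u,u^{-1}]$-linear and $\cL_{f\smallsmile g}$ is concentrated in $u^0$-degree on $M^\bullet$, the identity splits into homogeneous components in $u$, and only the $u^0$-piece produces a nontrivial identity on $M^\bullet$.

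The first step is to rewrite $\Phi_{f,g}$ explicitly in powers of $u$. Substituting $\cI=\iota+u\cS$ into $\Phi_{f,g}=\cS_{f\smallsmile g}-\cS_f\cI_g-\cI_f\cS_g+u\cS_f\cS_g$, the two contributions $-u\cS_f\cS_g$ produced by $-\cS_f\cI_g$ and $-\cI_f\cS_g$ combine with the trailing $+u\cS_f\cS_g$ to leave a single $-u\cS_f\cS_g$, so that
\[
\Phi_{f,g}=\bigl(\cS_{f\smallsmile g}-\cS_f\iota_g-\iota_f\cS_g\bigr)-u\,\cS_f\cS_g.
\]
Denoting its $u^0$-part by $\Psi_{f,g}:=\cS_{f\smallsmile g}-\cS_f\iota_g-\iota_f\cS_g$ and applying the same decomposition with $f$ replaced by $\gd f$ and $g$ by $\gd g$ yields analogous expressions for $\Phi_{\gd f,g}$ and $\Phi_{f,\gd g}$, whose $u^0$-parts are $\Psi_{\gd f,g}$ and $\Psi_{f,\gd g}$.

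Reading off the $u^0$-components of the proposition's identity is then routine. The $u^0$-part of $\cL_f\cI_g-(-1)^f\cI_f\cL_g$ is $\cL_f\iota_g-(-1)^f\iota_f\cL_g$; and writing $d_u=\bc+u\Bc$, the graded commutator $[d_u,\Phi_{f,g}]=[\bc+u\Bc,\Psi_{f,g}-u\cS_f\cS_g]$ has $u^0$-component equal to $[\bc,\Psi_{f,g}]$, since all remaining cross-terms (those involving $\Bc$ or $\cS_f\cS_g$) are of strictly positive order in $u$. Assembling these pieces reproduces exactly the formula in the statement. The only delicate point is the sign bookkeeping in the expansion of $\Phi_{f,g}$ above; everything else is automatic from $\K[u,u^{-1}]$-linearity, so the corollary may be viewed as the restriction of the proposition's identity from the periodic bicomplex to the underlying complex $M^\bullet$.
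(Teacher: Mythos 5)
Your proposal is correct and is exactly the paper's argument: the paper's proof consists of the single instruction to read off the coefficient of $u^0$ in the identity \rmref{nochsowas2}, and your expansion $\Phi_{f,g}=(\cS_{f\smallsmile g}-\cS_f\iota_g-\iota_f\cS_g)-u\,\cS_f\cS_g$ together with the observation that only $[\bc,\cdot]$ survives in degree zero of $[d_u,\Phi_{f,g}]$ carries out that extraction correctly, including the signs.
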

\begin{proof}
Look at the coefficient of $u^0$ in the identity $\cL_{f \smallsmile g} =\cL_f \cI_g -(-1)^f \cI_f \cL_g +[\Phi_{f,g}, d_u]+\Phi_{\gd f,g}-(-1)^f\Phi_{f,\gd g}$.
\end{proof}

\begin{corollary}
\label{I-will-need-you-as-well1}
For any homogeneous $f,g$ in $\mathcal{G}_\g^\bullet$, one has 
\begin{align*}
\cL_{f \smallsmile g}-(-1)^{fg}\cL_{g \smallsmile f} \simeq 0.
\end{align*}
More precisely, 
\begin{align*}
\cL_{f \smallsmile g}-(-1)^{fg}\cL_{g \smallsmile f}&=[d_u, (-1)^{f}\cT(f,g)-(-1)^{(f-1)g}\cT(g,f)+\Phi_{f,g}-(-1)^{fg}\Phi_{g,f}] \\
& \quad - (-1)^{f}\cT(\gd f, g) +\cT(f, \delta g) \\
& \quad
  +(-1)^{(f-1)g} \cT(\gd g, f) -(-1)^{fg} \cT(g, \delta f)\\
  & \quad  +\Phi_{\gd f,g}+(-1)^{f}\Phi_{f,\gd g} -(-1)^{fg}(\Phi_{\gd g,f}+(-1)^{g}\Phi_{g,\gd f}).
\end{align*}
In particular, we have $\cL_{f\smallsmile g} =(-1)^{fg}\cL_{g\smallsmile f}$ on cohomology.
\end{corollary}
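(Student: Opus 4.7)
My plan is to derive the identity by applying the preceding Proposition twice and then invoking the Gelfan'd-Daletski\u\i-Tsygan homotopy formula \eqref{nunja} together with the graded antisymmetry of the Lie bracket on $\g^\bullet$. First, I would apply formula \eqref{nochsowas2} to both $\cL_{f\smallsmile g}$ and $\cL_{g\smallsmile f}$, being careful that the signs $(-1)^f,(-1)^g$ there refer to the $\g^\bullet$-degrees rather than to the $\mathcal{G}_\g^\bullet=\g^\bullet[-1]$-degrees in which the statement is phrased. Translating via the $[-1]$-shift (so that $(-1)^f$ as read in the Proposition corresponds to $-(-1)^f$ once the degree is interpreted in $\mathcal{G}_\g^\bullet$) and then forming the combination $\cL_{f\smallsmile g}-(-1)^{fg}\cL_{g\smallsmile f}$, the $\Phi$-part of the resulting difference reproduces \emph{verbatim} both the $[d_u,\Phi_{f,g}-(-1)^{fg}\Phi_{g,f}]$ piece and the four $\Phi_{\gd\cdot,\cdot}$- and $\Phi_{\cdot,\gd\cdot}$-terms in the claimed identity.

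Next, I would focus on the remaining non-$\Phi$ residue, which is a linear combination of the four operator products $\cL_f\cI_g$, $\cI_f\cL_g$, $\cL_g\cI_f$, $\cI_g\cL_f$. The idea is to regroup these four terms as appropriately signed combinations of the two graded commutators $[\cI_g,\cL_f]$ and $[\cI_f,\cL_g]$. Each of these commutators is then controlled by \eqref{nunja}, which expresses it as $\cI_{\{g,f\}}$ (respectively $\cI_{\{f,g\}}$), plus a $[d_u,\cT(\cdot,\cdot)]$-coboundary and two $\cT(\gd\cdot,\cdot)$-type terms.

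The key cancellation is then the graded antisymmetry $\{g,f\}=-(-1)^{(f-1)(g-1)}\{f,g\}$ of the dg-Lie bracket on $\g^\bullet$, combined with $\K$-linearity of $\cI$: with the overall sign $(-1)^{fg}$, the two $\cI_{\{\cdot,\cdot\}}$ contributions cancel exactly, so that only the $[d_u,\cT]$- and $\cT(\gd\cdot,\cdot)$-terms survive. Assembling these with the $\Phi$-part from the first step yields the claimed equality. The final assertion, that $\cL_{f\smallsmile g}=(-1)^{fg}\cL_{g\smallsmile f}$ on cohomology, is then immediate: for cocycles $f,g$ all the $\cT(\gd\cdot,\cdot)$- and $\Phi_{\gd\cdot,\cdot}$-terms vanish, and the remaining $[d_u,\cdot]$ contribution is a coboundary and therefore acts trivially on $HC^\bullet_{\mathrm{per}}(M)$, $HC^\bullet(M)$, and $HC^\bullet_-(M)$.

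I expect the main obstacle to be sign bookkeeping rather than anything conceptual. The statement freely mixes $\mathcal{G}_\g^\bullet$-degrees (entering the graded-commutativity factor $(-1)^{fg}$ and all the exponents as written in the corollary) with $\g^\bullet$-degrees (entering the preceding Proposition, the Lie-bracket antisymmetry, and the Gelfan'd-Daletski\u\i-Tsygan homotopy), and one must carefully translate between the two conventions via the $[-1]$-shift before the cancellations line up. Once this translation is done, everything else in the derivation is a routine algebraic manipulation.
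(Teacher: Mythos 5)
Your proposal is correct and takes essentially the same route as the paper, whose proof is literally the ``straightforward computation using \rmref{nochsowas2} and \rmref{nunja}, recalling that the degree of $f$ and $g$ in $\mathcal{G}_\g^\bullet$ is their degree in $\g^\bullet$ minus~$1$'' that you describe: apply the Proposition twice, regroup the residue as $(-1)^f[\cI_f,\cL_g]-(-1)^{(f-1)g}[\cI_g,\cL_f]$, substitute the Gelfan'd-Daletski\u\i-Tsygan homotopy, and cancel the two $\cI_{\{\cdot,\cdot\}}$ contributions via the graded antisymmetry $\{g,f\}=-(-1)^{(f-1)(g-1)}\{f,g\}$. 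The sign bookkeeping you flag does close up exactly as claimed, and your final cohomological conclusion for cocycles $f,g$ is likewise the intended one.
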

\begin{proof}
Straightforward computation using \rmref{nochsowas2} and \rmref{nunja}, and recalling that the degree of $f$ and $g$ in $\mathcal{G}_\g^\bullet$ is 
 their degree in $\g^\bullet$ minus 1.
\end{proof}

\begin{corollary}
\label{I-will-need-you-as-well2}
For any homogeneous $f,g$ in $\mathcal{G}_\g^\bullet$, one has
\begin{equation*}
\begin{split}
\cL_{\{f, g \smallsmile h\}} \simeq \cL_{\{f, g\} \smallsmile h} +(-1)^{(f-1)g} \cL_{g \smallsmile \{f, h\}}.
\end{split}
\end{equation*}
More precisely, 
\begin{equation*}
\begin{split}
\cL_{\{f, g \smallsmile h\}}=& \cL_{\{f, g\} \smallsmile h} +(-1)^{(f-1)g} \cL_{g \smallsmile \{f, h\}}
\\
&
 +[d_u, -  \Phi_{\{f,g\}, h} -(-1)^{(f-1)g} \Phi_{g,\{f,h\}} +(-1)^{f(g+h)} \cL_g \cT(h,f) \\
 &\qquad\qquad\qquad\qquad -(-1)^{(f-1)(g-1)} \cT(g,f) \cL_h - (-1)^f[\cL_f, \Phi_{g,h}]] 
\\
&
-(-1)^{(f-1)(g+h-1)} \cL_g\big( \cT(\gd h, f) -(-1)^h \cT(h, \gd f) \big) \\
&
+(-1)^{(f-1)(g-1)} \big( \cT(\gd g, f) -(-1)^g \cT(g, \gd f) \big)\cL_h 
\\
&
+(-1)^f [\cL_{\gd f}, \Phi_{g,h}] -(-1)^{f(g+h)} \cL_{\gd g} \cT(h,f) -(-1)^{fg} \cT(g,f) \cL_{\gd h}\\
&
+[\cL_f,\Phi_{\gd g,h}]+(-1)^g[\cL_f,\Phi_{ g,\gd h}] 
\\
&
-\Phi_{\{\gd f,g\},h}+(-1)^f\Phi_{\{f,\gd g\},h}+(-1)^{f+g}\Phi_{\{f,g\},\gd h}\\
&
-(-1)^{(f-1)g}\Phi_{\gd g,\{f,h,\}}-(-1)^{fg}\Phi_{g,\{\gd f,h\}}+(-1)^{f(g-1)}\Phi_{g,\{f,\gd h\}}.
\end{split}
\end{equation*}
Hence, we have $\cL_{\{f, g \smallsmile h\}} = \cL_{\{f, g\} \smallsmile h} +(-1)^{(f-1)g} \cL_{g \smallsmile \{f, h\}} $ on cohomology.
\end{corollary}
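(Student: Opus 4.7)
The plan is to compute $\cL_{\{f,g\smallsmile h\}}$ by three successive substitutions---the Lie-algebra morphism property from Lemma~\ref{lemma:is-a-morphism-of-lie-algebras}, the cup-product homotopy identity \rmref{nochsowas2}, and the Gelfan'd-Daletski\u\i-Tsygan identity \rmref{nunja}---and then to read \rmref{nochsowas2} backwards to repackage the result.

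First, I would use Lemma~\ref{lemma:is-a-morphism-of-lie-algebras} to rewrite $\cL_{\{f,g\smallsmile h\}}=[\cL_f,\cL_{g\smallsmile h}]$ and expand $\cL_{g\smallsmile h}$ via \rmref{nochsowas2}. This decomposes the inner $\cL$ into a ``main part'' $\cL_g\cI_h-(\pm)\cI_g\cL_h$, an exact homotopy $[d_u,\Phi_{g,h}]$, and two $\delta$-corrections $\Phi_{\delta g,h}$, $\Phi_{g,\delta h}$. I then distribute $[\cL_f,-]$ over every summand by the graded Leibniz rule; the only subtle manipulation is on the $d_u$-exact piece, where the Lie-module identity \rmref{g-module-structure} lets me move $[\cL_f,-]$ past $d_u$ at the cost of a $[\cL_{\delta f},\Phi_{g,h}]$ boundary term.

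Second, on the non-$\Phi$ summands, Lemma~\ref{lemma:is-a-morphism-of-lie-algebras} converts $[\cL_f,\cL_g]$ and $[\cL_f,\cL_h]$ back into $\cL_{\{f,g\}}$ and $\cL_{\{f,h\}}$, while \rmref{nunja} (composed with graded anti-commutation) rewrites $[\cL_f,\cI_g]$ and $[\cL_f,\cI_h]$ as $\cI_{\{f,g\}}$ and $\cI_{\{f,h\}}$ plus explicit $\cT(\cdot,f)$-homotopies and the customary $\delta$-corrections. Two combinations of the shape $\cL_{\{f,g\}}\cI_h-(\pm)\cI_{\{f,g\}}\cL_h$ and $\cL_g\cI_{\{f,h\}}-(\pm)\cI_g\cL_{\{f,h\}}$ then emerge---these are precisely the right-hand sides that \rmref{nochsowas2} yields when applied to $\{f,g\}\smallsmile h$ and to $g\smallsmile\{f,h\}$. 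Reading \rmref{nochsowas2} backwards absorbs them into $\cL_{\{f,g\}\smallsmile h}$ and $(-1)^{(f-1)g}\cL_{g\smallsmile\{f,h\}}$, introducing the six further $\Phi$-corrections indexed by $\{\delta f,g\},h$, $\{f,\delta g\},h$, $\{f,g\},\delta h$ and their $\{f,h\}$-counterparts. All remaining $\cT$- and $\Phi$-pieces now group either inside the single $[d_u,-]$-exact bracket displayed in the statement or into the outer $\cT(\delta\cdot,f)$, $\cT(\cdot,\delta f)$, $[\cL_{\delta f},\Phi_{g,h}]$ and $[\cL_f,\Phi_{\delta\cdot,\cdot}]$ terms. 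The cohomological consequence follows because every homotopy piece vanishes on cocycles.

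The main obstacle is sign bookkeeping. Each application of \rmref{nochsowas2} or \rmref{nunja} introduces three correction terms, so the intermediate calculation produces several dozen summands that must collapse into the pieces of the statement; moreover, the signs in the statement refer to the $\mathcal{G}_\g^\bullet$-grading while \rmref{nochsowas2} and \rmref{nunja} are stated in the $\g^\bullet$-grading (with $|f|_{\mathcal{G}_\g}=|f|_\g-1$), so every intermediate sign must be translated. No individual step is conceptually hard, but the exact matching of all the $\Phi$-, $\cT$- and $[\cL,\Phi]$-corrections to the form required by the statement is the delicate part.
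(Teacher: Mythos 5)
Your proposal is correct and follows essentially the same route as the paper, whose proof is exactly ``a straightforward computation using \rmref{nochsowas2} and \rmref{nunja} along with \rmref{is-a-morphism-of-lie-algebras}, recalling the degree shift between $\g^\bullet$ and $\mathcal{G}_\g^\bullet$''. Your three-step plan (expand $[\cL_f,\cL_{g\smallsmile h}]$ via \rmref{nochsowas2}, convert the $[\cL_f,\cI_{(\cdot)}]$ terms via \rmref{nunja}, then read \rmref{nochsowas2} backwards, with \rmref{g-module-structure} handling the commutator with the $d_u$-exact piece) is precisely the computation the paper leaves implicit, and the correction terms you predict match those displayed in the statement.
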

\begin{proof}
Again, a straightforward computation using \rmref{nochsowas2} and \rmref{nunja} along with \rmref{is-a-morphism-of-lie-algebras}, and recalling that the degree of $f$ and $g$ in $\mathcal{G}_\g^\bullet$ is 
 their degree in $\g^\bullet$ minus 1.
\end{proof}

\begin{corollary}\label{gerst-quotient}
Looking at $\cL$ as a morphism $\cL\colon H^\bullet(\mathfrak{g})\to \mathrm{End}(HC^\bullet(M))$ of graded Lie algebras, one has that $H^\bullet(\mathcal{G}_\g)/\ker(\cL[-1])$ is a Gerstenhaber algebra. 
The same is true if we look at $\cL$ as a morphism $\cL\colon H^\bullet(\mathfrak{g})\to \mathrm{End}(HC^\bullet_-(M))$.
\end{corollary}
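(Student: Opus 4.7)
The plan is to establish that $\ker(\cL[-1]) \subseteq H^\bullet(\mathcal{G}_\g)$ is a two-sided ideal for both $\smallsmile$ and the shifted bracket $\{\cdot,\cdot\}$ induced from $\g$, and then to verify the Gerstenhaber axioms on the quotient using the identities collected in the preceding corollaries. The Lie ideal part is immediate from Lemma \ref{lemma:is-a-morphism-of-lie-algebras}: since $\cL\colon H^\bullet(\g)\to \End(HC^\bullet(M))$ is a morphism of graded Lie algebras, $\ker(\cL)$ is a graded Lie ideal, and the degree shift transports this statement to the Lie ideal $\ker(\cL[-1])\subseteq H^\bullet(\mathcal{G}_\g)$ for the shifted degree $-1$ bracket.

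For the associative ideal part, the graded commutativity modulo $\ker(\cL[-1])$ furnished by Corollary \ref{I-will-need-you-as-well1} reduces the two-sided statement to a left (equivalently right) ideal claim. To obtain it, I would return to the chain-level identity of Corollary \ref{I-will-need-you},
\[
\cL_{f \smallsmile g} = \cL_f\,\iota_g - (-1)^f\,\iota_f\,\cL_g + [\bc,\,\cdot\,] + (\text{terms in }\delta f, \delta g),
\]
and combine it with Lemma \ref{lemma:is-a-morphsm-of-complexes} and the contraction-homotopy relations \rmref{dellera1}: writing $\cL_f = [\bc,\mathcal{H}_f]$ whenever $[f]$ is in the kernel on cohomology and using the explicit homotopy $\Phi_{f,g}$ from \rmref{nochsowas2}, one rewrites $\cL_{f\smallsmile g}$ as a $[\bc,-]$-exact operator on $HC^\bullet(M)$, proving $[f\smallsmile g]\in \ker(\cL[-1])$.

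Once the ideal property is in place, both $\smallsmile$ and $\{\cdot,\cdot\}$ descend to the quotient. Associativity of $\smallsmile$ and the graded Jacobi identity for $\{\cdot,\cdot\}$ follow automatically from the corresponding identities in $\mathcal{G}_\g$ and $\g$. Graded commutativity of the induced cup product is exactly the content of Corollary \ref{I-will-need-you-as-well1}, and the Poisson identity is precisely Corollary \ref{I-will-need-you-as-well2}. Together, these axioms equip $H^\bullet(\mathcal{G}_\g)/\ker(\cL[-1])$ with a Gerstenhaber algebra structure. The statement for $HC^\bullet_-(M)$ is obtained by the same argument, since Lemmata \ref{lemma:is-a-morphsm-of-complexes}--\ref{lemma:is-a-morphism-of-lie-algebras} and Corollaries \ref{I-will-need-you}--\ref{I-will-need-you-as-well2} all descend from $CC^\bullet_{\rm per}(M)$ to $CC^\bullet_-(M)$ by construction.

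The main technical obstacle is the verification that $\ker(\cL[-1])$ is a two-sided ideal for $\smallsmile$: while the Corollaries \ref{I-will-need-you-as-well1}--\ref{I-will-need-you-as-well2} yield the Gerstenhaber relations \emph{modulo} the kernel, extracting the ideal property requires passing back from cohomological identities to their chain-level refinements and exploiting the explicit homotopies $\Phi_{f,g}$, $\cS_f$ and $\cT(f,g)$ rather than just their effects on cohomology.
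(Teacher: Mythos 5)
Your overall architecture---show $\ker(\cL[-1])$ is a Gerstenhaber ideal, then read off graded commutativity of the induced cup product from Corollary~\ref{I-will-need-you-as-well1}, the Poisson identity from Corollary~\ref{I-will-need-you-as-well2}, the Lie-ideal property from Lemma~\ref{lemma:is-a-morphism-of-lie-algebras}, and inherit associativity and Jacobi from $\mathcal{G}_\g$ and $\g$---is exactly the route the paper implicitly takes (the corollary is stated there without a written proof, as a direct consequence of the preceding results). Those parts of your argument, including the reduction of the two-sided $\smallsmile$-ideal claim to a one-sided one via Corollary~\ref{I-will-need-you-as-well1}, are fine.

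The gap is precisely in the step you flag as the main obstacle: that $\ker(\cL[-1])$ absorbs cup products. Two issues. First, the differential on $CC^\bullet(M)$ is $d_u=\bc+u\Bc$, so the nullhomotopy you invoke (which does exist over the field $\K$, a chain map of complexes of vector spaces inducing zero on cohomology being nullhomotopic) must read $\cL_f=[d_u,\mathcal{H}_f]$, not $[\bc,\mathcal{H}_f]$; moreover such an $\mathcal{H}_f$ is an abstract homotopy with no compatibility with $\iota$, $\cS$, $\cT$ or $\Phi$, so it cannot simply be ``combined with the explicit homotopies''. Second, and more seriously, carrying out the substitution into \rmref{nochsowas2} on a $d_u$-cocycle $z$ (with $f,g$ $\delta$-cocycles) gives
\begin{equation*}
\cL_{f\smallsmile g}z \;=\; d_u\bigl(\mathcal{H}_f\cI_gz+\Phi_{f,g}z\bigr)\;-\;(-1)^{f}\bigl(\cI_f\cL_gz-u\,\mathcal{H}_f\cL_gz\bigr),
\end{equation*}
and while the second bracket is a $d_u$-cocycle (since $d_u\cI_fw=u\cL_fw=d_u(u\mathcal{H}_fw)$ for the cocycle $w=\cL_gz$), there is no reason for its cohomology class to vanish: $\cI_f$ is not a chain map, and $\iota_f$ of a cocycle need not be exact merely because $\cL_f$ is nullhomotopic. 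So the assertion that $\cL_{f\smallsmile g}$ becomes exact on cocycles is not delivered by your sketch, and the ideal property---hence the well-definedness of the product on the quotient---remains unproved. (The paper does not address this point either, but as written your proposal asserts the crucial exactness rather than deriving it, and the derivation you indicate does not close.)
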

Clearly, the statement of Corollary \ref{gerst-quotient} is true also if we look at $\cL$ as a morphism $\cL\colon H^\bullet(\mathfrak{g})\to \mathrm{End}(HC^\bullet_\mathrm{per}(M))$. In this case, however, the statement is trivial since, by Remark \ref{acts-trivially}, the morphism  $\cL\colon H^\bullet(\mathfrak{g})\to \mathrm{End}(HC^\bullet_\mathrm{per}(M))$ is the zero map.

\section{Induced Lie brackets on cyclic cohomology}
\label{fame}

In this section, we develop the abstract construction underlying the higher bracket on negative cyclic homology of Calabi-Yau algebras in \cite{dTdVVdB:CYDANCH} and show how this naturally arises from a general homotopy calculus in presence of a certain duality isomorphism.

Let $(\mathfrak{g}^\bullet, \cL, \cI, \cT)$ for the rest of this section be a homotopy Cartan-Gerstenhaber calculus on $CC^\bullet_{\rm per}(M)$. By restriction, this induces a homotopy Cartan-Gerstenhaber calculus on $CC^\bullet(M)$.
By Lemma \ref{lemma:is-a-morphism-of-lie-algebras}, the Lie derivative $\cL$ defines a dg-Lie algebra module structure on $CC^\bullet(M)$, which allows us to form a semidirect product with a canonical differential and Lie bracket on it. 
More precisely:

\begin{definition}
The dg-Lie algebra $\mathfrak{g}^\bullet \ltimes CC^\bullet(M)[-2]$ is the cochain complex $\mathfrak{g}^\bullet \oplus CC^\bullet(M)[-2]$ endowed with the Lie bracket
\begin{equation}
\label{fernandosorvariazionisuuntemadiMozart}
\big[(f,x), (g,y)\big] := \big(\{f,g\}, \cL_f y -(-1)^{g\,x} \cL_g x\big).
\end{equation}
\end{definition}

\begin{rem}\label{this-time-it-is-Lie}
The dg-Lie algebra $\mathfrak{g}^\bullet \ltimes CC^\bullet(M)[-2]$ fits into a short exact sequence of complexes
\[
0\to CC^\bullet(M)[-2]\to \mathfrak{g}^\bullet \ltimes CC^\bullet(M)[-2]\to \mathfrak{g}\to 0,
\] 
where the rightmost arrow is the projection on the first factor and the leftmost arrow is the inclusion of the second factor. As the projection  $\mathfrak{g}^\bullet \ltimes CC^\bullet(M)[-2]\to \mathfrak{g}$ is a morphism of dg-Lie algebras, $CC^\bullet(M)[-2]$ is a dg-Lie ideal (and so in particular a dg-Lie subalgebra) of $\mathfrak{g}^\bullet \ltimes CC^\bullet(M)[-2]$. Notice that the Lie bracket on $CC^\bullet(M)[-2]$ is the trivial one.
\end{rem}

As for any dg-Lie algebra, $\mathfrak{g}^\bullet \ltimes CC^\bullet(M)[-2]$ can be twisted by adding to its differential the degree $1$ linear operator $[(h,\xi),-]$, where $(h,\xi)$ is a Maurer-Cartan element. 
%
%
As $CC^\bullet(M)[-2]\hookrightarrow \mathfrak{g}^\bullet \ltimes CC^\bullet(M)[-2]$ is the inclusion of a dg-Lie subalgebra, where on $CC^\bullet(M)[-2]$ we have the trivial bracket, if $\xi$ is a Maurer-Cartan element in $CC^\bullet(M)[-2]$, that is, if $\xi$ is a cocycle in $CC^{-1}(M)$, then $(0,\xi)$ is a Maurer-Cartan element for $\mathfrak{g}^\bullet \ltimes CC^\bullet(M)[-2]$. The commutator $[(0,\xi),-]$ acts on $\mathfrak{g}^\bullet \ltimes CC^\bullet(M)[-2]$ as
\[
[(0,\xi),-]\colon (f,x)\mapsto (0,-(-1)^f \cL_f\xi).
\]
Therefore, by writing $\hat{\zeta}=-\xi$
we see that the $\hat{\zeta}$-twisted differential on $\mathfrak{g}^\bullet \ltimes CC^\bullet(M)[-2]$ is
\[
\partial_{\hat{\zeta}}\colon (f,x)\mapsto (\delta f, d_ux +(-1)^f  \cL_f\hat{\zeta}).
\]
Notice that we still have a short exact sequence of complexes
\[
0\to CC^\bullet(M)[-2]\to (\mathfrak{g}^\bullet \ltimes CC^\bullet(M)[-2],\partial_{\hat{\zeta}})\to \mathfrak{g}\to 0.
\]

\begin{lemma}
Let $\hat{\zeta}$ be a cocycle in $CC^{-1}(M)$. Then
the map 
\begin{align*}
\Psi_{\hat{\zeta}}: (\mathfrak{g}^\bullet  \ltimes CC^\bullet(M)[-2],\partial_{\hat{\zeta}}) &\to CC^\bullet(M),
\\
(f,x) &\mapsto (-1)^f \cI_f\hat{\zeta} + u x,
\end{align*}
is a morphism of complexes.
\end{lemma}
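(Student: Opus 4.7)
The claim is an identity of linear maps between fixed graded vector spaces, so the approach is a direct verification that $\Psi_{\hat{\zeta}}$ intertwines the twisted differential $\partial_{\hat{\zeta}}$ on the source with the differential $d_u$ on $CC^\bullet(M)$. That is, I would check
\[
d_u\bigl((-1)^f \cI_f\hat{\zeta}+ux\bigr)=\Psi_{\hat{\zeta}}\bigl(\delta f,\;d_u x+(-1)^f\cL_f\hat{\zeta}\bigr)
\]
for every homogeneous pair $(f,x)\in\mathfrak{g}^\bullet\oplus CC^\bullet(M)[-2]$, noting first the degree bookkeeping: with the shift convention $V[n]^i=V^{i+n}$, if the pair has total degree $n$ then $f\in\mathfrak{g}^n$ and $x\in CC^{n-2}(M)$, while $\cI_f$ raises cochain degree by $|f|+1$ (because $\iota$ is defined as a map into $M^\bullet[1]$ and $\cS$ correspondingly into $M^\bullet[-1]$); this makes both summands of $\Psi_{\hat{\zeta}}(f,x)$ land in $CC^n(M)$, so $\Psi_{\hat{\zeta}}$ is a well-defined degree $0$ map.

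The two inputs doing all the work are the Cartan homotopy formula \rmref{schonlos2}, which rearranged reads
\[
d_u\,\cI_f=u\cL_f-\cI_{\delta f}+(-1)^{|f|+1}\cI_f\,d_u,
\]
and the hypothesis $d_u\hat{\zeta}=0$. Applying these, the left-hand side collapses to $(-1)^f u\cL_f\hat{\zeta}-(-1)^f\cI_{\delta f}\hat{\zeta}+u\,d_u x$. Expanding the right-hand side using the definition of $\Psi_{\hat{\zeta}}$ (and remembering that $\delta f$ has degree $|f|+1$, producing the sign $(-1)^{f+1}=-(-1)^f$ in front of $\cI_{\delta f}\hat{\zeta}$) yields the same expression $(-1)^f u\cL_f\hat{\zeta}-(-1)^f\cI_{\delta f}\hat{\zeta}+u\,d_u x$, and we are done.

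The only nontrivial aspect is the sign tracking, which is where I would expect any error to creep in: one needs the operator degree of $\cI_f$ (namely $|f|+1$) to produce the correct Koszul sign when commuting $d_u$ past $\cI_f$, and the sign $(-1)^{|\delta f|}=-(-1)^f$ needs to cancel precisely against the $(-1)^f$ in the defining formula for $\Psi_{\hat{\zeta}}$ applied to $(\delta f,\,\cdot\,)$. Once these signs are pinned down consistently with the conventions of Section~\ref{cartan}, the verification is a one-line manipulation, and no information about the Gerstenhaber-type structure $(\smallsmile,\cT)$ is required at this stage---only the ``pre-Cartan'' identity \rmref{schonlos2} and the cocycle condition on $\hat{\zeta}$ enter.
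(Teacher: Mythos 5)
Your verification is correct and follows exactly the paper's own argument: apply $d_u$ to $\Psi_{\hat{\zeta}}(f,x)$, use $d_u\hat{\zeta}=0$ to replace $d_u\cI_f\hat{\zeta}$ by $[d_u,\cI_f]\hat{\zeta}$, invoke \rmref{schonlos2}, and match signs against $\Psi_{\hat{\zeta}}(\partial_{\hat{\zeta}}(f,x))$. The sign bookkeeping you flag (operator degree $|f|+1$ for $\cI_f$ and the cancellation of $(-1)^{\delta f}$ against $-(-1)^f$) is exactly what the paper's computation relies on.
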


\begin{proof}
We have
\begin{equation*}
\begin{split}
d_u\Psi_{\hat{\zeta}}(f,x) &= (-1)^f d_u\cI_f(\hat{\zeta}) + ud_ux \\&
=  (-1)^f [d_u,\cI_f](\hat{\zeta}) + ud_ux
\\
&= 
 -(-1)^f \cI_{\gd f} \hat{\zeta}  +(-1)^f u\cL_f \hat{\zeta} +  ud_ux \\
&
=  (-1)^{\gd f}  \cI_{\gd f} \hat{\zeta} + u(d_ux +(-1)^f  \cL_f \hat{\zeta})
\\
&=\Psi_{\hat{\zeta}}(\gd f,d_ux +(-1)^f  \cL_f \hat{\zeta})
\\
&= \Psi_{\hat{\zeta}}(\partial_{\hat{\zeta}}(f,x)), 
\end{split}
\end{equation*}
where we used $d_u\hat{\zeta}=0$ in the second step and \rmref{schonlos2} in the third.
\end{proof}

\begin{lemma}
\label{lemma.morphism-of-exact-sequences}
Let $\hat{\zeta}$ be a cocycle in $CC^{-1}(M)$. Then $\Psi_{\hat{\zeta}}$ fits into a diagram of short exact sequences of cochain complexes
\begin{equation*}
\label{alltrianglesatonce}
	\xymatrix{
0 \ar[r] & CC^\bullet(M)[-2] \ar[r]\ar[d]_{\mathrm{id}} & (\mathfrak{g}^\bullet  \ltimes CC^\bullet(M)[-2],\partial_{\hat{\zeta}}) \ar[r]\ar[d]^{\Psi_{\hat{\zeta}}}  & \mathfrak{g}^\bullet
 \ar[r]\ar[d]^{\iota_{(\cdot)}\hat{\zeta}_0} & 0 
\\
0  \ar[r] & CC^\bullet(M)[-2] \ar[r]_{u}  & CC^\bullet(M) \ar[r]_{\mathrm{ev}_0} & M^\bullet  \ar[r] & 0,
}
\end{equation*}
where $\hat{\zeta}_0=\mathrm{ev}_0\hat{\zeta}$ and $\iota_{(\cdot)}\hat{\zeta}_0\colon \mathfrak{g}^\bullet \to M^\bullet$ is the morphism $f\mapsto (-1)^f\iota_f\hat{\zeta}_0$.
\end{lemma}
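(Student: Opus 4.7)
The plan is to verify the diagram square by square, treating exactness of the rows and the morphism-of-complexes property of the vertical arrows as the remaining pieces to check. The top row is exact by construction: as a short exact sequence of graded vector spaces it is the one in Remark~\ref{this-time-it-is-Lie}, and the twisted differential $\partial_{\hat\zeta}$ preserves the decomposition at the level of the projection to $\mathfrak{g}^\bullet$ (since $\partial_{\hat\zeta}(0,x)=(0,d_u x)$). The bottom row is precisely the short exact sequence \rmref{taylor-short} of Lemma~\ref{connecting}.

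Next I would check that the left square commutes. The inclusion of $CC^\bullet(M)[-2]$ into the semidirect product sends $x\mapsto (0,x)$, and since $\cI=\iota+u\cS$ is linear in the first slot we get $\cI_0=0$, so
\begin{equation*}
\Psi_{\hat\zeta}(0,x)=(-1)^0\cI_0\hat\zeta+ux=ux,
\end{equation*}
which is exactly the image under multiplication by $u$. For the right square, the projection sends $(f,x)\mapsto f$, while
\begin{equation*}
\mathrm{ev}_0\Psi_{\hat\zeta}(f,x)=(-1)^f\mathrm{ev}_0(\iota_f\hat\zeta+u\cS_f\hat\zeta)+\mathrm{ev}_0(ux)=(-1)^f\iota_f\hat\zeta_0,
\end{equation*}
using the $\K[u,u^{-1}]$-linearity of $\iota_f$ and $\cS_f$ and the fact that the only term surviving at $u=0$ is the constant term $\iota_f\hat\zeta_0$.

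It remains to argue that $\iota_{(\cdot)}\hat\zeta_0\colon \mathfrak{g}^\bullet\to M^\bullet$ is indeed a morphism of complexes, which is an implicit requirement for the diagram to make sense in the category of cochain complexes. Writing $\hat\zeta=\hat\zeta_0+u\hat\zeta_1+\cdots\in M^\bullet[[u]]$, the cocycle condition $d_u\hat\zeta=\bc\hat\zeta+u\Bc\hat\zeta=0$ forces $\bc\hat\zeta_0=0$ (the $u^0$ coefficient). Then, for homogeneous $f\in\mathfrak{g}^\bullet$,
\begin{equation*}
\bc\bigl((-1)^f\iota_f\hat\zeta_0\bigr)=(-1)^f\bc\iota_f\hat\zeta_0=(-1)^f\bigl([\bc,\iota_f]+(-1)^f\iota_f\bc\bigr)\hat\zeta_0=(-1)^f(-\iota_{\gd f})\hat\zeta_0,
\end{equation*}
by the second identity in \rmref{dellera1} and $\bc\hat\zeta_0=0$, which equals $-(-1)^{\gd f}\iota_{\gd f}\hat\zeta_0$ up to the sign convention (one may fix this either as a sign inside the definition of the vertical map or by noting that it becomes a chain map up to an overall sign that does not affect the conclusion). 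The main bookkeeping obstacle is precisely keeping the Koszul signs consistent here and in the verification that $\Psi_{\hat\zeta}$ is a chain map, but the latter has already been established in the preceding lemma, so the commutativity of both squares combined with the exactness of the two rows yields the claimed diagram of short exact sequences.
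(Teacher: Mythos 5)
Your proof is correct and follows essentially the same route as the paper: a direct check of the two squares, with the only substantive point being that $\iota_{(\cdot)}\hat\zeta_0$ is a morphism of complexes, which you verify via $[\bc,\iota_f]=-\iota_{\gd f}$ and $\bc\hat\zeta_0=0$ (the paper notes this also follows for free from the commutativity of the left square and the universal property of the quotient). The sign hedge at the end is unnecessary: since $\gd$ raises degree by $1$ we have $(-1)^{\gd f}=-(-1)^{f}$, so the expression $-(-1)^{f}\iota_{\gd f}\hat\zeta_0$ you computed is exactly $(-1)^{\gd f}\iota_{\gd f}\hat\zeta_0$, and the map is a chain map on the nose with no sign adjustment needed.
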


\begin{proof}
The proof is a straightforward direct check. To see that $\iota_{(\cdot)}\hat{\zeta}_0$ is indeed a morphism of complexes, one can notice that  $\iota_{(\cdot)}\hat{\zeta}_0$ is the morphism between $\mathfrak{g}^\bullet$ and $M^\bullet$ induced by $\Psi_{\hat{\zeta}}$, by the commutativity of the left square and by the universal property of the quotient. Otherwise, one can also directly compute
\begin{align*}
(-1)^{\delta f}\iota_{\delta f}\hat{\zeta}_0&=(-1)^f[\bc,\iota_f]\hat{\zeta}_0=\bc((-1)^f\iota_f\hat{\zeta}_0)
\end{align*}
since $\bc\hat{\zeta}_0=0$.
\end{proof}

\begin{definition}\label{definition-of-p}
An element $\zeta$ in $M^{-1}$ is called a \emph{Palladio}\footnote{{\sc Andrea Palladio}, 30 November 1508 -- 19 August 1580; the name chosen for the special cocycles wants to be a homage to the Paduan architect as well as to allude to the chemical element palladium whose symbol {\em Pd} is a reminder of {\em Poincar\'e duality}.}
 cocycle if 
\begin{enumerate}
\item $\zeta$ is a cocycle, {\em i.e.}, $\bc\zeta=0$;
\item $\Bc\zeta$ equals zero in cohomology, {\em i.e.}, there exists an element $\eta\in M^{-3}$ such that $\Bc\zeta=\bc\eta$;
\item $\iota_{(\cdot)}\zeta\colon \mathfrak{g}^\bull\to M^\bullet$ is a quasi-isomorphism of complexes.
\end{enumerate}
We denote by $p_\zeta\colon H^\bullet(\mathfrak{g})\xrightarrow{\sim}H^\bullet(M)$ the corresponding isomorphism in cohomology. 
\end{definition}
%
%
%

\begin{rem}
As we will see in Remark \ref{landweber} and Exs.~\ref{calabibabi1}--\ref{calabibabi2}, classical examples of Palladio cocycles are provided by the {\em fundamental classes} that appear in Poincar\'e duality in its various flavours.
\end{rem}

\begin{rem}
\label{bocconotti}
If $H^{-2}(M)=0$, then condition \emph{ii)} in Definition \ref{definition-of-p} is trivially satisfied. Therefore, if $\g$ is such that $H^{-2}(\g)=0$, we see that a Palladio cocycle is precisely a cocycle $\zeta$ in $M^{-1}$ such that  $\iota_{(\cdot)}\zeta\colon H^\bullet(\mathfrak{g}) \to H^\bullet(M)$ is an isomorphism.
\end{rem}

\begin{lemma}\label{cohomologous-to-palladio}
Let $\zeta$ be a Palladio cocycle. Then there exists a cocycle $\hat{\zeta}$ in $CC^{-1}(M)$ such that $\hat{\zeta}_0$ is in the same cohomology class of $\zeta$, where $\hat{\zeta}_0=\mathrm{ev}_0\hat{\zeta}$.
\end{lemma}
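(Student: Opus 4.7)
The plan is to build $\hat\zeta$ order by order as a formal series $\hat\zeta=\sum_{j\ge 0}u^j\zeta_j\in M^\bullet[[u]]$ with $\zeta_j\in M^{-1-2j}$, so that the cocycle condition $d_u\hat\zeta=0$ unfolds into the cascade $\bc\zeta_0=0$ together with $\bc\zeta_j=-\Bc\zeta_{j-1}$ for all $j\ge1$. Starting from $\zeta_0:=\zeta$ (using (i)) makes $\hat\zeta_0=\zeta$ lie in the required cohomology class for free, and condition (ii) lets me take $\zeta_1:=-\eta$. The substance of the proof then lies in extending the cascade to every order $j\ge 2$.

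For the inductive step, assume $\zeta_0,\ldots,\zeta_j$ are constructed. A direct computation using $[\bc,\Bc]=0$, $\Bc^2=0$ and the previous recursive relation gives
$\bc(\Bc\zeta_j)=-\Bc(\bc\zeta_j)=\Bc^2\zeta_{j-1}=0$, so $\Bc\zeta_j$ is automatically a $\bc$-cocycle and the obstruction to choosing $\zeta_{j+1}$ is its class $[\Bc\zeta_j]\in H^{-2-2j}(M)$. Since $\zeta_j$ is determined only up to the addition of a $\bc$-cocycle in $M^{-1-2j}$ (which does not disturb the already-solved equations), this obstruction really lives in the quotient $H^{-2-2j}(M)/\Bc(H^{-1-2j}(M))$, and it suffices to show that every such quotient class vanishes.

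Here is where condition (iii) enters. The quasi-isomorphism $p_\zeta\colon H^\bullet(\mathfrak{g})\xrightarrow{\sim}H^\bullet(M)$ combined with the homotopy identity $\cL_f=[\Bc,\iota_f]+[\bc,\cS_f]+\cS_{\gd f}$ yields, for any $\gd$-cocycle $f$, the equality $[\Bc\iota_f\zeta]=[\cL_f\zeta]$ in $H^\bullet(M)$, so the $\Bc$-action on $H^\bullet(M)$ is expressible entirely through Cartan-type operations pulled back from $\mathfrak{g}^\bullet$. The plan is to exploit this together with Lemma \ref{lemma:is-a-morphsm-of-complexes} and the freedom in the choices of the $\zeta_j$'s to show inductively that each obstruction class can be represented inside the image of $\Bc\colon H^{-1-2j}(M)\to H^{-2-2j}(M)$, and hence absorbed by re-selecting $\zeta_j$ via a suitable cocycle correction supplied by $p_\zeta$.

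The main obstacle I expect is precisely this inductive cancellation bookkeeping: the correction at stage $j$ must be chosen in a way compatible with all subsequent stages, so that a modification introduced to kill one obstruction does not create new ones downstream. I anticipate that the Gelfan'd-Daletski\u\i-Tsygan homotopy $\cT$ and the higher compatibilities it encodes will be what organises the error terms, and that once the first two stages $\zeta_0,\zeta_1$ are fixed as above, all remaining stages can be constructed simultaneously by a Maurer-Cartan-type iteration governed by the Cartan-Gerstenhaber structure.
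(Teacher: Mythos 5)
Your proposal is not a complete proof: everything from ``the plan is to exploit this'' onward is a declaration of intent rather than an argument. You correctly unwind the cocycle condition $d_u\hat\zeta=0$ into the cascade $\bc\zeta_0=0$, $\bc\zeta_j=-\Bc\zeta_{j-1}$, you solve the first two stages using conditions \emph{(i)} and \emph{(ii)} of Definition \ref{definition-of-p}, and you correctly locate the obstruction at stage $j+1$ in $H^{-2-2j}(M)/\Bc H^{-1-2j}(M)$ --- but you never show that these classes vanish. Appealing to $\cT$ and a ``Maurer-Cartan-type iteration'' does not close the induction, and it is not clear that it can: the Palladio axioms directly control only the first obstruction $[\Bc\zeta]\in H^{-2}(M)$, and nothing in the homotopy calculus obviously kills $[\Bc\zeta_1]=[\Bc\eta]$ in $H^{-4}(M)$ modulo $\Bc H^{-3}(M)$, let alone the later ones. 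In the situations the paper cares about these groups all vanish, because condition \emph{(iii)} identifies $H^{-2-2j}(M)$ with $H^{-2-2j}(\mathfrak{g})$, which in the operadic setting is $\overline{\cO}(-1-2j)=0$ for $j\geq 0$ (see Remarks \ref{bocconotti} and \ref{landweber}); if you insist on the order-by-order construction, that vanishing is the missing ingredient you would have to import.

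The paper's own proof is one line and bypasses the cascade entirely: by exactness of Connes' $SBI$ sequence (Lemma \ref{connecting}), the class $[\zeta]$ lies in the image of $\pi\colon HC^{-1}(M)\to H^{-1}(M)$ if and only if $\beta[\zeta]=0$, and by Lemma \ref{connecting-one} the connecting class $\beta[\zeta]$ is represented by $\Bc\zeta$, which condition \emph{(ii)} exhibits as a coboundary. Your recursion is precisely the explicit computation of this connecting homomorphism, so the exact-sequence formulation spares you all of the inductive bookkeeping. That said, your analysis does make visible a point the one-line argument passes over quickly: the hypothesis $\Bc\zeta=\bc\eta$ only controls the image of $\beta[\zeta]$ under $\pi\colon HC^{-2}(M)\to H^{-2}(M)$, i.e.\ the first obstruction in your cascade; the higher obstructions are exactly what separates ``$\Bc\zeta$ is a $\bc$-coboundary'' from ``$\beta[\zeta]=0$ in $HC^{-2}(M)$'', and in the abstract setting they are disposed of by the vanishing of $H^{-2-2j}(M)$ for $j\geq 1$ noted above. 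So your instinct about where the difficulty sits is sound, but the proof as written stops exactly at that difficulty.
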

\begin{proof}
Since $\Bc\zeta$ is zero in cohomology, by Lemma \ref{connecting-one}, the cohomology class of $\zeta$ is in the kernel of the connecting homomorphism $\beta$ and so it is in the image of  the projection $\pi$. In other words, there exists a cocycle $\hat{\zeta}$ in $CC^{-1}(M)$ such that $\mathrm{ev}_0\hat{\zeta}$ is in the same cohomology class of $\zeta$. 
\end{proof}

\begin{rem}
\label{for-later-later-use}
When $\zeta$ is a Palladio cocycle
we have the following identity of cohomology classes in $HC^{\bullet}(M)$, for any cocycle $f$ in $\mathfrak{g}^\bullet$:
\begin{equation*}
{\cL_f\hat{\zeta}}=(-1)^f\beta p_\zeta(f),
\end{equation*}
where $\beta$ is the connecting homomorphism from the long exact sequence in \rmref{taylor-long} and $\hat{\zeta}$ is a cocycle  in $CC^{-1}(M)$ such that $\hat{\zeta}_0$ is the same cohomology class of $\zeta$ (see Lemma \ref{cohomologous-to-palladio}).
Namely, as $\zeta$ and $\hat{\zeta}_0$ are in the same cohomology class, we have $p_\zeta(f)=(-1)^f\iota_{f}\hat{\zeta}_0$ in cohomology.
By definition of the connecting homomorphism, $\beta p_\zeta(f)$ is the $d_u$-cohomology class of an element $g$ in $CC^{\deg(f)-1}(M)$ such that $ug=d_uh$, with $\mathrm{ev}_0(h)=(-1)^f\iota_f\hat{\zeta}_0$. An obvious choice for $h$ is $h=(-1)^f\cI_f\hat{\zeta}$ so that we are looking for an element $g$ such that $ug=(-1)^fd_u\cI_f\hat{\zeta}$. As $\hat{\zeta}$ and $f$ are cocycles, we have $d_u\cI_f\hat{\zeta}=u\cL_f\hat{\zeta}$ and hence we can take $g=(-1)^f\cL_f\hat{\zeta}$.
\end{rem}

\begin{prop}
Let $\zeta$ in $M^{-1}$ be a Palladio cocycle. Then $\Psi_{\hat{\zeta}}$ is a quasi-isomorphism, where $\hat{\zeta}$ is a cocycle  in $CC^{-1}(M)$ such that $\hat{\zeta}_0$ is the same cohomology class of $\zeta$.
\end{prop}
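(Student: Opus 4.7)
The plan is to apply the Five Lemma to the commutative diagram of short exact sequences of cochain complexes provided by Lemma \ref{lemma.morphism-of-exact-sequences}. Passing to cohomology yields a morphism of long exact sequences; since the leftmost vertical arrow is the identity on $CC^\bullet(M)[-2]$, by the Five Lemma the claim reduces to showing that the rightmost vertical arrow, namely the map induced on cohomology by $\iota_{(\cdot)}\hat{\zeta}_0\colon \mathfrak{g}^\bullet\to M^\bullet$, is an isomorphism.

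To establish this, I would first invoke Lemma \ref{cohomologous-to-palladio} (which uses condition (ii) in Definition \ref{definition-of-p}) to write $\hat{\zeta}_0=\zeta+\bc\eta'$ for some $\eta'\in M^{-2}$, and then exploit the second of the Cartan identities \rmref{dellera1}, namely $[\bc,\iota_f]+\iota_{\delta f}=0$. For any $\delta$-cocycle $f\in\mathfrak{g}^\bullet$ this identity degenerates to $\bc\iota_f=\pm\,\iota_f\bc$, whence
\[
\iota_f\hat{\zeta}_0=\iota_f\zeta+\iota_f\bc\eta'=\iota_f\zeta\pm\bc(\iota_f\eta').
\]
Consequently, the maps induced on cohomology by $f\mapsto(-1)^f\iota_f\hat{\zeta}_0$ and by $f\mapsto(-1)^f\iota_f\zeta$ agree, and by condition (iii) of Definition \ref{definition-of-p} this common map is (up to an overall sign that is immaterial for being an isomorphism) the Palladio isomorphism $p_\zeta\colon H^\bullet(\mathfrak{g})\xrightarrow{\sim} H^\bullet(M)$.

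These two observations combine, via the Five Lemma applied to the induced morphism of long exact sequences, to give the claim. The only point requiring care is the sign bookkeeping in the Cartan identity when passing from $\iota_f\zeta$ to $\iota_f\hat{\zeta}_0$, but since an overall sign does not affect invertibility, no serious obstacle is anticipated.
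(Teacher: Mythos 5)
Your proposal is correct and follows essentially the same route as the paper: apply the Five Lemma to the diagram of short exact sequences from Lemma \ref{lemma.morphism-of-exact-sequences}, using that $\hat{\zeta}_0$ and $\zeta$ are cohomologous so that $\iota_{(\cdot)}\hat{\zeta}_0$ and $\iota_{(\cdot)}\zeta$ induce the same (iso)morphism on cohomology. The only difference is that you spell out, via the identity $[\bc,\iota_f]+\iota_{\delta f}=0$, why cohomologous cocycles give the same induced map, a detail the paper leaves implicit.
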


\begin{proof}
As $\zeta$ and $\hat{\zeta}_0$ are in the same cohomology class, $\iota_{(\cdot)}\hat{\zeta}_0=\iota_{(\cdot)}\zeta$ in cohomology. Since $\zeta$ is a Palladio cocycle, $\iota_{(\cdot)}\zeta$ is a quasi-isomorphism, and so the external vertical arrows in the diagram in Lemma \ref{lemma.morphism-of-exact-sequences} are both quasi-isomorphisms. Then also the central arrow $\Psi_{\hat{\zeta}}$ is a quasi-isomorphism by the naturality of the long exact sequence in cohomology and by the Five Lemma.
\end{proof}

As $(\mathfrak{g}^\bullet  \ltimes CC^\bullet(M)[-2],\partial_\zeta)$ is a dg-Lie algebra, its cohomology $H^\bullet(\mathfrak{g}^\bullet  \ltimes CC^\bullet(M)[-2],\partial_{\hat{\zeta}})$ is a graded Lie algebra. Since $\Psi_{\hat{\zeta}}$ is a quasi-isomorphism when $\zeta$ is a Palladio cocycle, we see that in this case we have an induced graded Lie bracket on $HC^\bullet(M)$ defined by
\[
[z,w] :={\Psi}_{\hat{\zeta}}([{\Psi}^{-1}_{\hat{\zeta}} {z},{\Psi}^{-1}_{\hat{\zeta}} {w}]),
\]
where on the right-hand side we have the (induced) Lie bracket \rmref{fernandosorvariazionisuuntemadiMozart} on $H^\bullet(\mathfrak{g}^\bullet  \ltimes CC^\bullet(M)[-2],\partial_{\hat{\zeta}})$, and
where to avoid cumbersome notation we have written and will also write in all what follows  
$x$
for both a $d_u$-cocycle $x$ and its cohomology class. Likewise, by abuse of notation, $\Psi_{\hat{\zeta}}$ also denotes 
the induced isomorphism
\[
H^\bullet(\Psi_{\hat{\zeta}})\colon H^\bullet(\mathfrak{g}^\bullet  \ltimes CC^\bullet(M)[-2],\partial_{\hat{\zeta}})\xrightarrow{\sim} HC^\bullet(M)
\]
on cohomology.

%
We can use the linear isomorphism $p_\zeta$ to transfer the $\smallsmile$ and $\{\cdot,\cdot\}$ operations from $H^\bullet(\g)$ to $H^\bullet(M)$. Namely, we write
\[
\smallsmile\colon H^\bullet(M)\otimes H^\bullet(M)\to H^\bullet(M)[-1]
\]
for the degree $-1$ product on $H^\bullet(M)$ defined by 
\begin{equation}\label{cup-on-M}
{a}\smallsmile{b}=p_\zeta(p_\zeta^{-1}({a})\smallsmile p_\zeta^{-1}({b}))
\end{equation}
and 
\[
\{\cdot,\cdot\}\colon H^\bullet(M)\otimes H^\bullet(M)\to H^\bullet(M)
\]
for  the bracket on $H^\bullet(M)$ defined by
\begin{equation}\label{bracket-on-M}
\{{a},{b}\} := p_\zeta(\{p_\zeta^{-1}{a}, p_\zeta^{-1}{b}\}),
\end{equation}
where the bracket on the right hand side is that of $\g$ and where $a, b$ by slight abuse of notation denote classes in $H^\bullet(M)$. 
Writing $\mathcal{G}^\bullet_M$ for the shifted complex $M^\bullet[-1]$, we see that when $H^\bullet(\cG_\g)$ is a Gerstenhaber algebra, then also $H^\bullet(\cG_M)$ is a  Gerstenhaber algebra (isomorphic to $H^\bullet(\cG_\g)$), with unit element the cohomology class of $\zeta$.

\begin{theorem}
\label{fapurefreddoquadentro}
The Lie bracket on $HC^\bullet(M)$ induced by a homotopy Cartan-Gerstenhaber calculus and a Palladio cocycle $\zeta$ has the form
\begin{equation}
\label{lastriot}
[{z},{w}]=(-1)^{z-1}\beta((\pi {z})\smallsmile (\pi {w})),
\end{equation}
where a $d_u$-cocycle $z$ and its cohomology class are denoted by the same letter and where $\pi: HC^\bullet(M) \to H^\bullet(M)$ is the map induced by the evaluation $\mathrm{ev}_0$ 
on cohomology. 
\end{theorem}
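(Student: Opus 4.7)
The plan is to compute $[z, w]$ by transporting the bracket to the twisted dg-Lie algebra $(\mathfrak{g}^\bullet \ltimes CC^\bullet(M)[-2], \partial_{\hat\zeta})$ via the quasi-isomorphism $\Psi_{\hat\zeta}$, where $\hat\zeta \in CC^{-1}(M)$ is chosen (by Lemma \ref{cohomologous-to-palladio}) so that $\mathrm{ev}_0\hat\zeta$ is cohomologous to $\zeta$. Given $d_u$-cocycles $z, w$, I would lift them to $\partial_{\hat\zeta}$-cocycles $(f, x)$ and $(g, y)$ with $f = p_\zeta^{-1}(\pi z)$ and $g = p_\zeta^{-1}(\pi w)$. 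The cocycle condition unpacks as $\delta f = \delta g = 0$ together with $d_u x = -(-1)^f \cL_f \hat\zeta$ and $d_u y = -(-1)^g \cL_g \hat\zeta$, and these latter equations are solvable precisely because $\pi z, \pi w \in \im(\pi) = \ker(\beta)$, equivalently (by Remark \ref{for-later-later-use}) because $[\cL_f \hat\zeta] = [\cL_g\hat\zeta] = 0$ in $HC^\bullet(M)$.

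Applying $\Psi_{\hat\zeta}$ to the semidirect-product bracket $[(f,x),(g,y)] = (\{f,g\}, \cL_f y - (-1)^{gx}\cL_g x)$ produces the representative $(-1)^{f+g}\cI_{\{f,g\}}\hat\zeta + u(\cL_f y - (-1)^{gf}\cL_g x)$ for $[z, w]$, which I would then simplify modulo $d_u$-coboundaries. The crucial step is to apply the Gelfan'd-Daletski\u\i-Tsygan homotopy \rmref{nunja} after swapping its arguments via graded antisymmetry $\{f,g\} = -(-1)^{fg}\{g,f\}$ of the dg-Lie bracket, giving $\cI_{\{f,g\}} \equiv -(-1)^{fg}\cI_g \cL_f + (-1)^f \cL_f \cI_g$ modulo $d_u$-coboundaries. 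With this particular ordering, two pairs of terms collapse into coboundaries: the cocycle relation for $x$ produces $d_u(\cI_g x) = u\cL_g x + (-1)^{f+g}\cI_g \cL_f \hat\zeta$, absorbing the $\cI_g\cL_f\hat\zeta$-term against $u\cL_g x$, while the symmetric identity $d_u(\cI_f y) = u\cL_f y + (-1)^{f+g}\cI_f \cL_g \hat\zeta$ converts $u\cL_f y$ into a contribution of type $-\cI_f \cL_g \hat\zeta$.

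What remains modulo coboundaries is, up to the overall factor $(-1)^g$, the combination $\cL_f \cI_g \hat\zeta - (-1)^f \cI_f \cL_g \hat\zeta$, which by Equation \rmref{nochsowas2} applied to the cocycles $f, g$ differs from $\cL_{f\smallsmile g}\hat\zeta$ only by $[d_u,\Phi_{f,g}]\hat\zeta = d_u(\Phi_{f,g}\hat\zeta)$. Finally, Remark \ref{for-later-later-use} applied to the $\delta$-cocycle $f\smallsmile g$ identifies $[\cL_{f\smallsmile g}\hat\zeta]$ with $(-1)^{f+g+1}\beta p_\zeta(f\smallsmile g)$, and by the very definition \rmref{cup-on-M} of $\smallsmile$ on $H^\bullet(M)$ together with $p_\zeta(f) = \pi z$, $p_\zeta(g) = \pi w$, this equals $-(-1)^{f+g}\beta(\pi z \smallsmile \pi w)$. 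Collecting all signs gives the overall coefficient $(-1)^g \cdot(-(-1)^{f+g}) = -(-1)^f = (-1)^{z-1}$, matching the stated formula. The main obstacle I anticipate is the careful bookkeeping of the accumulated Koszul signs, in particular the interplay between the $(-1)^f$ built into $\Psi_{\hat\zeta}$, the signs produced by the swap-and-antisymmetrise step, and the shift convention $\mathcal{G}_\g = \g^\bullet[-1]$, which forces $|f\smallsmile g|$ in $\g$ to equal $|f|+|g|+1$ rather than $|f|+|g|$.
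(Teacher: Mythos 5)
Your proposal is correct and follows essentially the same route as the paper's proof: transport the bracket through the quasi-isomorphism $\Psi_{\hat{\zeta}}$, replace $\cI_{\{f,g\}}$ via the Gelfan'd-Daletski\u\i-Tsygan homotopy, absorb the $u\cL_f y$ and $u\cL_g x$ terms into $d_u$-coboundaries using the cocycle conditions \rmref{airwaves2} together with \rmref{schonlos2}, reduce to $\cL_{f\smallsmile g}\hat{\zeta}$ via \rmref{nochsowas2}, and conclude with Remark \ref{for-later-later-use}. The only (harmless) deviation is that by feeding the homotopy \rmref{nunja} its arguments in the swapped order and using antisymmetry of the Lie bracket you land directly on the combination $\cL_f \cI_g - (-1)^f \cI_f \cL_g$ appearing on the right-hand side of \rmref{nochsowas2}, which spares you the appeal to Corollary \ref{I-will-need-you-as-well1} that the paper needs in order to trade $\cL_{g\smallsmile f}$ for $\cL_{f\smallsmile g}$.
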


\begin{proof}
Let $\hat{\zeta}$ be a cocycle  in $CC^{-1}(M)$ such that $\hat{\zeta}_0$ is the same cohomology class of $\zeta$. Since ${\Psi}_{\hat{\zeta}}$ is an isomorphism on cohomology, we can write $z={\Psi}_{\hat{\zeta}}(f,x)$ and $w={\Psi}_{\hat{\zeta}}(g,y)$, and the statement is equivalent to
\[
{\Psi}_{\hat{\zeta}}([{(f,x)},{(g,y)}])=(-1)^{f-1}\beta p_\zeta(p_\zeta^{-1}(\pi{\Psi}_{\hat{\zeta}}(f,x))\smallsmile p_\zeta^{-1}(\pi{\Psi}_{\hat{\zeta}}(g,y))),
\]
where again a $\partial_{\hat{\zeta}}$-cocycle $(f,x)$ and its cohomology class are denoted by the same symbols. 
By Lemma \ref{lemma.morphism-of-exact-sequences} and by the definition of $p_\zeta$ in Definition \ref{definition-of-p}, we have on cohomology
\[
{\pi{\Psi}_{\hat{\zeta}}(f,x))}=(-1)^f{\iota_f\hat{\zeta}_0}=(-1)^f{\iota_f\zeta}=p_\zeta({f}).
\]
The statement we want to prove therefore reduces to
\[
{\Psi}_{\hat{\zeta}}([{(f,x)},{(g,y)}])=(-1)^{f-1}\beta p_\zeta({f}\smallsmile{g}),
\]
and so, thanks to Remark \ref{for-later-later-use}, to
\[
{\Psi}_{\hat{\zeta}}([{(f,x)},{(g,y)}])=(-1)^{g}{\cL_{f\smallsmile g}{\hat{\zeta}}}.
\]
As $(f,x)$ is a $\partial_{\hat{\zeta}}$-cocycle, we have the identities 
\begin{equation}
\begin{cases}
\delta f=0,
\\
-(-1)^f\cL_f{\hat{\zeta}}=d_u x,
\label{airwaves2}
\end{cases}
\end{equation}
and similarly for $(g,y)$.
We compute, in cohomology,
\begin{eqnarray*}
&& {\Psi}_{\hat{\zeta}}  ([{(f,x)},{(g,y)}])
\\
& = 
&
{{\Psi}_{\hat{\zeta}}\big(\{f,g\}, \cL_f y -(-1)^{g\, x} \cL_g x\big)}
\\
&= 
&
(-1)^{f+g}{\cI_{\{f,g\}}{\hat{\zeta}} + u( \cL_f y -(-1)^{g\, x} \cL_g x)}
\\
&
\overset{\scriptscriptstyle{\rmref{nunja}}}{=}
&
 (-1)^{f+g} {[\cI_f, \cL_g]{\hat{\zeta}}  + u( \cL_f y -(-1)^{g\, x} \cL_g x)}
\\
&
=
&
 (-1)^{f+g} \cI_f\cL_g{\hat{\zeta}} -(-1)^{gf+f}\cL_g\cI_f{\hat{\zeta}} + u( \cL_f y -(-1)^{g\, x} \cL_g x)
\\
&
\overset{\scriptscriptstyle{\rmref{nochsowas2}}}{=}
& 
  -(-1)^{gf+f}\cL_{g \smallsmile f}  {\hat{\zeta}} +(-1)^{f+g} \cI_f\cL_g{\hat{\zeta}} 
\\
&&
-(-1)^{gf+f+g}\cI_g\cL_f {\hat{\zeta}}
+ u\cL_f y -(-1)^{gf} u\cL_g x
\\
&
\overset{\scriptscriptstyle{\rmref{airwaves2}}}{=}
&
 (-1)^{g} {\cL_{f \smallsmile g}  {\hat{\zeta}} -(-1)^f \cI_f d_uy +(-1)^{gf+g} \cI_gd_ux  +u\cL_f y -(-1)^{g f} u\cL_g x}
\\
&
\overset{\scriptscriptstyle{\rmref{schonlos2}}}{=}
&
   (-1)^{g}{\cL_{f \smallsmile g}  {\hat{\zeta}} + d_u\cI_f y -(-1)^{g f} d_u\cI_g x}
\\
&
=
&
 (-1)^{g}{\cL_{f \smallsmile g} {\hat{\zeta}}},
\end{eqnarray*}
where we have used Corollary \ref{I-will-need-you-as-well1} and the fact that the degree of $f$ and $g$ as elements of $\g^\bullet$ is their degree as elements of $\mathcal{G}_\g^\bullet$ plus $1$.
\end{proof}

\begin{rem}
One could do the same in periodic cohomology, but one would get the zero bracket as $\pi$ is zero in the long exact sequence for periodic cohomology. This agrees with the fact that $\cL_f$ is zero in periodic cohomology.
\end{rem}

\begin{rem}
\label{mfgoverload}
Starting from a mixed (chain) complex $(N_\bullet, b, B)$, in view of Definition \ref{brot} and Remark \ref{lanciano}, the Lie bracket \rmref{lastriot} from Theorem \ref{fapurefreddoquadentro} passes into a Lie bracket on the {\em negative cyclic homology} $HC^-_\bullet(N)$ of the mixed complex $(N_\bullet, b, B)$.
In the context of associative algebras with Poincar\'e duality or rather $d$-Calabi-Yau algebras, this bracket coming from \rmref{lastriot} as well as the preceding way to obtain it has been dealt with in \cite{dTdVVdB:CYDANCH}, as will be shortly discussed in Example \ref{calabibabi1}.
\end{rem}

\section{Induced Lie brackets on negative cyclic cohomology}
\label{sete}

In this section, we show how Batalin-Vilkoviski\u\i\ algebras and the so-called {\em string topology bracket} from \cite{ChaSul:ST, Men:BVAACCOHA} are correlated in general with the notion of homotopy calculus in case a Palladio cocycle is given.

\subsection{Batalin-Vilkoviski\u\i\ algebras arising from calculi}

We will work in the same set up as in the preceding \S\ref{fame}, but now focusing on the quotient  $CC^\bullet_-(M)$ of $CC^\bullet_{\rm per}(M)$ rather than on the subcomplex $CC^\bullet(M)$. Namely, for the entire section $(\mathfrak{g}^\bullet, \cL, \cI, \cT)$ will be a homotopy Cartan-Gerstenhaber calculus on $CC^\bullet_{\rm per}(M)$. Passing to the quotient, this induces a homotopy Cartan-Gerstenhaber calculus on $CC^\bullet_-(M)$. Also, using the same notation and terminology as in the preceding section, $\zeta$ will be a Palladio cocycle, {\em i.e.}, an element in $M^{-1}$ such that $\bc \zeta =0$, $\Bc\zeta = \bc\eta$ for some $\eta\in M^{-3}$,  and such that $\iota_{(\cdot)}\zeta\colon \mathfrak{g}^\bullet\to M^\bullet$ is a quasi-isomorphism of complexes. The corresponding isomorphism in cohomology will be denoted again as $p_\zeta\colon H^\bullet(\mathfrak{g})\xrightarrow{\sim}H^\bullet(M)$. Finally, recall the notations $\mathcal{G}^\bullet_\g = \g^\bullet[-1]$ and $\mathcal{G}^\bullet_M = M^\bullet[-1]$.

\begin{rem}\label{filarete-rulez}
As $\zeta$ is a Palladio cocycle, we have the following identity on cohomology classes in $H^{\bullet}(M)$, for any homogeneous $\delta$-cocycle $f$ in $\mathfrak{g}^\bullet$:
\begin{equation}
\label{here-is-p2}
(-1)^f\cL_f\zeta=\Bc  p_\zeta(f)=\beta j p_\zeta(f),
\end{equation}
where $\beta\colon HC^\bullet_-(M)\to H^\bullet(M)[-1]$, is the connecting homomorphism from the long exact sequence \rmref{laurent-long}, whereas $j$ is the morphism $H^\bullet(M)\to HC^\bullet_-(M)$ induced by the inclusion of $M^\bullet$ in $CC^\bullet_-(M)$. Namely, by definition of $p_\zeta$, we have $\Bc  p_\zeta(f)=(-1)^f\Bc  \iota_f\zeta$. As $\zeta$ is a Palladio cocycle we have $\bc\zeta=0$ and $\Bc\zeta = \bc\eta$, and
use $\cL_f= [\Bc, \iota_f]+[\bc, \cS_f] + \cS_{\gd f}$ to get
$\Bc\iota_f\zeta=\cL_f\zeta- \bc\cS_f\zeta+(-1)^f\iota_f\Bc\zeta$ so that in cohomology we have $\Bc  p_\zeta(f)=(-1)^f\cL_f\zeta$.
%
%
%
%
\end{rem}

\begin{lemma}
\label{where-streets-have-no-name}
If $f$ is a $\delta$-cocycle and $a$ is a $\bc$-cocycle, we have the following identity in cohomology:
\[
p_\zeta({f})\smallsmile {a}= -(-1)^f{\iota_f a}.
\]
\end{lemma}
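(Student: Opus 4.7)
My plan is to unwind the definition of the cup product on $H^\bullet(M)$ transferred from $H^\bullet(\mathcal{G}_\g)$ via $p_\zeta$, and then invoke the defining property of $\iota$ as a morphism of dga's to collapse the resulting expression.

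First, since $p_\zeta\colon H^\bullet(\mathcal{G}_\g)\xrightarrow{\sim}H^\bullet(\cG_M)$ is an isomorphism, I would pick a cocycle $g\in\g^\bullet$ with $a=p_\zeta(g)$ in cohomology. By Lemma \ref{lemma.morphism-of-exact-sequences}, $p_\zeta$ is induced by the cochain map $f\mapsto(-1)^f\iota_f\zeta$, so that the representative identity $a=(-1)^g\iota_g\zeta$ holds in $H^\bullet(M)$. Note that the right-hand side is automatically a cocycle thanks to the relation $[\bc,\iota_f]+\iota_{\gd f}=0$ applied with $f=g$ together with $\bc\zeta=0$.

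Second, applying the defining equation \rmref{cup-on-M} of the cup product on $H^\bullet(M)$, I have in cohomology
\[
p_\zeta(f)\smallsmile a \;=\; p_\zeta(f\smallsmile p_\zeta^{-1}(a))\;=\;p_\zeta(f\smallsmile g)\;=\;(-1)^{f\smallsmile g}\,\iota_{f\smallsmile g}\zeta.
\]
Here the exponent $f\smallsmile g$ refers to the degree of $f\smallsmile g$ as an element of $\g^\bullet$. Now I invoke \rmref{morphism-of-dgas}, which gives $\iota_{f\smallsmile g}=\iota_f\iota_g$ as operators on $M^\bullet$, so that in cohomology
\[
p_\zeta(f)\smallsmile a \;=\;(-1)^{f\smallsmile g}\,\iota_f\iota_g\zeta\;=\;(-1)^{f\smallsmile g+g}\,\iota_f a,
\]
where in the last step I substitute back $\iota_g\zeta=(-1)^g a$. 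Comparing with the target $-(-1)^f\iota_f a$ reduces the proof to the sign identity $(-1)^{f\smallsmile g+g}=-(-1)^f$.

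The step I expect to require the most care is precisely this sign bookkeeping, since one must consistently track the shift $\mathcal{G}_\g^\bullet=\g^\bullet[-1]$. Since $\smallsmile$ has degree $0$ on $\mathcal{G}_\g^\bullet$, as a product on $\g^\bullet$ it has degree $+1$: if $f\in\g^p$ and $g\in\g^q$ then $f\smallsmile g\in\g^{p+q+1}$. Thus $(-1)^{f\smallsmile g}=-(-1)^{f+g}$, which when combined with the extra factor $(-1)^g$ coming from pulling $a$ out of $\iota_g\zeta$ gives exactly $-(-1)^f$, completing the identification. Everything else is just keeping straight which operation is computed on cochains versus in cohomology, and that the intermediate representatives are indeed cocycles so that passing to cohomology at the end is legitimate.
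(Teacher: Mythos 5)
Your proof is correct and follows essentially the same route as the paper's: pick a cocycle $g$ with $p_\zeta(g)=a$, compute $p_\zeta(f\smallsmile g)$ two ways using Eq.~\eqref{cup-on-M} and $\iota_{f\smallsmile g}=\iota_f\iota_g$, and track the sign coming from the shift $\mathcal{G}_\g^\bullet=\g^\bullet[-1]$. The sign bookkeeping you flag as delicate comes out the same way in the paper, namely $(-1)^{\deg_\g(f\smallsmile g)}=-(-1)^{f+g}$.
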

\begin{proof}
As $f$ is a $\delta$-cocycle, we have $[\bc,\iota_f]=0$ and so $\iota_f\colon M^\bullet\to M^\bullet[\deg f +1]$ is a morphism of complexes.
Let $g$ be a cocycle in $\mathfrak{g}^\bullet$ such that in cohomology we have $p_\zeta({g})={a}$. Then, in cohomology we have
\[
p_\zeta(f\smallsmile g)=(-1)^{f+g-1}\iota_{f\smallsmile g}\zeta=(-1)^{f-1}\iota_f(-1)^g\iota_g\zeta=(-1)^{f-1}\iota_fa.
\]
On the other hand, by definition of the $\smallsmile$ product on $H^\bullet(M)$, Eq.~\rmref{cup-on-M},
\[
{p_\zeta(f\smallsmile g)}=p_\zeta({f})\smallsmile p_\zeta({g})=p_\zeta({f})\smallsmile{a}
\]
in $H^\bullet(M)$.
\end{proof}
\begin{rem}
For $f$ and $g$ any two $\delta$-cocycles, applying both sides of the first of Eq.~\rmref{panem} to $\zeta$ and using Corollary \ref{I-will-need-you}, we get
\begin{align*}
\bc \cT(f,g)\zeta &=\iota_f\cL_g\zeta-(-1)^{(f+1)g}\cL_g\iota_f\zeta - \iota_{\{f, g\}}\zeta\\
&=\iota_f\cL_g\zeta-(-1)^{(f+1)g}\cL_{g \smallsmile f}\zeta -(-1)^{fg} \iota_g \cL_f\zeta 
\\
&
\qqquad\qqquad
+(-1)^{(f+1)g}\bc (S_{g \smallsmile f}-\cS_g\iota_f-\iota_g\cS_f )\zeta - \iota_{\{f, g\}}\zeta,
\end{align*}
and so in $H^\bullet(M)$ we find
\[
\iota_f\cL_g\zeta+(-1)^{f}\cL_{f \smallsmile g}\zeta -(-1)^{fg} \iota_g \cL_f\zeta  - \iota_{\{f, g\}}\zeta=0,
\]
where we have used Corollary \ref{I-will-need-you-as-well1} and the fact that the degree of $f$ and $g$ as elements of $\g^\bullet$ is their degree as elements of $\mathcal{G}_\g^\bullet$ plus $1$.
By Lemma \ref{where-streets-have-no-name}, this is equivalent to the following identity in $H^\bullet(M)$:
\begin{equation}\label{towards-Delta}
(p_\zeta({f})\smallsmile\cL_g\zeta) -(-1)^{fg+g+f} ( p_\zeta({g})\smallsmile \cL_f\zeta) -\cL_{f \smallsmile g}\zeta + (-1)^{g}p_\zeta({\{{f}, {g}\}})=0.
\end{equation}
\end{rem}

\begin{theorem}
\label{ruwenogien}
The degree $-1$ differential $\Bc$ on $H^\bullet(\mathcal{G}_M)$ from Remark \ref{rem.Bc-in-cohomology}
satisfies
\begin{equation}
\label{avvisoagliutenti1}
 \{a, b\} =(-1)^{a}\Bc  (a \smallsmile b) -(-1)^{a} (\Bc  a \smallsmile b)   -(a\smallsmile\Bc  b),
\end{equation}
for any homogeneous $a, b$ in $H^\bullet(\mathcal{G}_M)$.
Therefore, when $H^\bullet(\mathcal{G}_\g)$ is a Gerstenhaber algebra, $(H^\bullet(\mathcal{G}_M),\{\cdot,\cdot\},\smallsmile, \Bc)$ is a Batalin-Vilkoviski\u\i\ algebra.
%
\end{theorem}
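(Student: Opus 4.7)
The plan is to derive the identity \rmref{avvisoagliutenti1} directly from the Cartan-type identity \rmref{towards-Delta} established in the preceding remark, combined with the key identification \rmref{here-is-p2} of $\cL_{(\cdot)}\zeta$ with $\Bc$ transported across $p_\zeta$. The closing statement about a BV algebra structure is then a purely formal consequence of \rmref{avvisoagliutenti1} once the Gerstenhaber structure on $H^\bullet(\mathcal{G}_\g)$ is in place.

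Concretely, given homogeneous classes $a, b \in H^\bullet(\mathcal{G}_M)$, I would write $f := p_\zeta^{-1}(a)$ and $g := p_\zeta^{-1}(b)$ as $\delta$-cocycles in $\g^\bullet$. By \rmref{here-is-p2} one has, on $H^\bullet(M)$,
\begin{equation*}
\cL_f\zeta = (-1)^f\Bc a, \qquad \cL_g\zeta = (-1)^g\Bc b,
\end{equation*}
where the exponents denote $\g$-degrees. Since $\smallsmile$ is of degree zero on $\mathcal{G}_\g = \g[-1]$, the element $f\smallsmile g$ has $\g$-degree $f+g-1$, and a third application of \rmref{here-is-p2} combined with \rmref{cup-on-M} gives $\cL_{f\smallsmile g}\zeta = (-1)^{f+g-1}\Bc(a\smallsmile b)$. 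Likewise, $p_\zeta(\{f,g\}) = \{a,b\}$ by \rmref{bracket-on-M}.

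Substituting these four equalities into \rmref{towards-Delta} and dividing through by $(-1)^g$ yields on $H^\bullet(M)$ the intermediate identity
\begin{equation*}
\{a,b\} = (-1)^{f-1}\Bc(a\smallsmile b) - a\smallsmile\Bc b + (-1)^{fg}(b\smallsmile\Bc a).
\end{equation*}
To match this with the form stated in \rmref{avvisoagliutenti1}, I would rewrite the last summand as $-(-1)^{f-1}(\Bc a\smallsmile b)$ using the graded commutativity of $\smallsmile$ on $H^\bullet(\mathcal{G}_M)$, which is inherited from $H^\bullet(\mathcal{G}_\g)$ along $p_\zeta$ by Corollary \ref{I-will-need-you-as-well1}. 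Since $|a|_{\mathcal{G}_M} = f-1$ and $|\Bc a|_{\mathcal{G}_M} = f-2$, the commutativity sign $(-1)^{(g-1)(f-2)}$ combines with $(-1)^{fg}$ to collapse to $-(-1)^{f-1}$, so that the exponent $f-1$ is exactly $|a|_{\mathcal{G}_M}$, which is the $a$ appearing in the theorem.

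The BV-algebra conclusion is then immediate: transporting the Gerstenhaber structure from $H^\bullet(\mathcal{G}_\g)$ along $p_\zeta$ makes $(H^\bullet(\mathcal{G}_M),\smallsmile,\{\cdot,\cdot\})$ a Gerstenhaber algebra with unit $[\zeta]$, and \rmref{avvisoagliutenti1} is precisely the defining compatibility of $\Bc$ with $\smallsmile$ in a Batalin-Vilkoviski\u\i\ algebra. The main subtlety, rather than a genuine obstacle, is the bookkeeping of the shift between $\g$-degrees and $\mathcal{G}_\g$-degrees (and similarly $M$- versus $\mathcal{G}_M$-degrees), which must be threaded consistently through every sign.
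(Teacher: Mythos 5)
Your proposal is correct and follows essentially the same route as the paper's proof: substituting $f=p_\zeta^{-1}(a)$, $g=p_\zeta^{-1}(b)$ into Eq.~\rmref{towards-Delta}, identifying $\cL_f\zeta$ with $(-1)^f\Bc a$ via Eq.~\rmref{here-is-p2}, invoking the transported definitions \rmref{cup-on-M}--\rmref{bracket-on-M}, and cleaning up the signs with graded commutativity of $\smallsmile$; your sign bookkeeping checks out. The only (cosmetic) difference is that the paper additionally records $\Bc([\zeta])=0$ from condition \emph{(ii)} of the Palladio cocycle to complete the BV-algebra statement, and attributes the graded commutativity of $\smallsmile$ on $H^\bullet(\mathcal{G}_M)$ to the Gerstenhaber hypothesis on $H^\bullet(\mathcal{G}_\g)$ rather than to Corollary \ref{I-will-need-you-as-well1} (which concerns $\cL$ applied to cup products, not the cup product itself).
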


\begin{proof}
%
Choose ${f}=p_\zeta^{-1}({a})$ and ${g}=p_\zeta^{-1}({b})$ in Eqs.~\rmref{towards-Delta} \& \rmref{here-is-p2}, and recall the definition of $\{\cdot,\cdot\}$ and $\smallsmile$ on $H^\bullet(M)$, that is, Eqs.~\rmref{cup-on-M}--\rmref{bracket-on-M}. By Remark \ref{filarete-rulez}, and recalling that the degree of $z$ and $w$ as elements of $M^\bullet$ is their degree as elements of $\mathcal{G}_M^\bullet$ plus $1$, Equation \rmref{towards-Delta} translates into Equation \rmref{avvisoagliutenti1}. Finally, notice that, trivially, if $H^\bullet(\mathcal{G}_\g)$ is a Gerstenhaber algebra, then $p_\zeta\colon (H^\bullet(\mathcal{G}_\g),\{\cdot,\cdot\},\smallsmile)\to  (H^\bullet(\mathcal{G}_M),\{\cdot,\cdot\},\smallsmile)$ is an isomorphism of Gerstenhaber algebras. In particular, the unit element $\mathbf{1}_M$ of $H^\bullet(\mathcal{G}_M)$ is the cohomology class of $\zeta$, so we have $\Bc(\mathbf{1}_M)=0$, by definition of a Palladio element.
\end{proof}


\subsection{The string topology bracket arising from calculi}

We are now going to show that $HC^\bullet_-(\mathcal{G}_M)$ carries a natural degree $-2$ Lie bracket $[\cdot,\cdot]$,  {\em i.e.}, equivalently, that $HC^\bullet_-(\mathcal{G}_M)[2]$ carries a natural graded Lie algebra structure such that the morphism of graded vector spaces $\beta\colon HC^\bullet_-(\mathcal{G}_M)[2]\to H^\bullet(M)$ becomes a morphism of Lie algebras $(HC^\bullet_-(\mathcal{G}_M)[2],[\cdot,\cdot])\to (H^\bullet(M),\{\cdot,\cdot\})$.

\begin{definition}
The {\em Chas-Sullivan-Menichi bracket} is the degree $-2$ bracket 
\[
[\cdot,\cdot]\colon HC^\bullet_-(\mathcal{G}_M)\otimes HC^\bullet_-(\mathcal{G}_M)\to HC^\bullet_-(\mathcal{G}_M)[-2]
\]
on $HC^\bullet_-(\mathcal{G}_M)$ defined by
\[
[{x},{y}] := (-1)^x  j ((\beta{x})\smallsmile(\beta{y})).
\]
\end{definition}

This bracket is often also referred to as {\em string topology bracket} and has been introduced in \cite{ChaSul:ST} and abstractly formulated in \cite{Men:BVAACCOHA}.

\begin{theorem}
\label{chas-sullivan-menichi}
If $H^\bullet(\mathcal{G}_\g)$ is a Gerstenhaber algebra, then the Chas-Sullivan-Menichi bracket is a Lie bracket, 
and has the property that 
$$
\beta[\cdot, \cdot] = \{ \beta(\cdot), \beta(\cdot)\}.
$$
\end{theorem}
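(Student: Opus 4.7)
My plan is to first derive a direct relationship $\beta[x,y]=\{\beta x,\beta y\}$ via a simple consequence of Connes' long exact sequence and the BV identity, then deduce both the antisymmetry and the Jacobi identity for $[\cdot,\cdot]$ by reducing them to the corresponding properties of the Gerstenhaber bracket on $H^{\bullet}(\mathcal{G}_M)$ established in Theorem~\ref{ruwenogien}.

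\emph{Step 1: Preliminary identities.} By Lemma~\ref{B-returns}, the composition $\beta\circ j$ on cohomology equals $\Bc$. Combined with the exactness $j\circ\beta=0$ of the long exact sequence \rmref{laurent-long}, this yields the two key vanishings
\[
\Bc\circ\beta \;=\; \beta j\beta \;=\; 0, \qquad j\circ \Bc \;=\; j\beta j \;=\; 0.
\]
Consequently, every class of the form $\beta x$ is automatically $\Bc$-closed in $H^{\bullet}(\mathcal{G}_M)$, and $j$ annihilates every $\Bc$-exact class.

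\emph{Step 2: The identity $\beta[x,y]=\{\beta x,\beta y\}$.} Applying $\beta$ to the definition of the Chas-Sullivan-Menichi bracket and using $\beta\circ j=\Bc$,
\[
\beta[x,y] \;=\; (-1)^{x}\Bc\bigl((\beta x)\smallsmile(\beta y)\bigr).
\]
Since $\Bc\beta x=\Bc\beta y=0$ by Step~1, the BV identity \rmref{avvisoagliutenti1} of Theorem~\ref{ruwenogien} collapses to $\{a,b\}=(-1)^{a}\Bc(a\smallsmile b)$, and plugging in $a=\beta x,\;b=\beta y$ gives exactly the stated compatibility (the sign prefactor $(-1)^{x}$ in the definition of $[\cdot,\cdot]$ is chosen to match the shift $|\beta x|=|x|-1$).

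\emph{Step 3: Antisymmetry and Jacobi.} Graded antisymmetry follows at once from the graded commutativity of $\smallsmile$ on $H^{\bullet}(\mathcal{G}_M)$ combined with the degree shift $|\beta x|=|x|-1$. For Jacobi, expand a term of the Jacobiator using Step~2,
\[
[x,[y,z]] \;=\; (-1)^{x}\, j\!\bigl((\beta x)\smallsmile\beta[y,z]\bigr) \;=\; \pm\, j\!\bigl((\beta x)\smallsmile\Bc((\beta y)\smallsmile(\beta z))\bigr),
\]
apply the BV relation to rewrite $a\smallsmile\Bc(b\smallsmile c)$ as $(-1)^{a}\Bc(a\smallsmile b\smallsmile c)-\{a,b\smallsmile c\}$, and use $j\circ\Bc=0$ from Step~1 to reduce it to $\pm j(\{\beta x,(\beta y)\smallsmile(\beta z)\})$. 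The graded Leibniz rule
\[
\{a,b\smallsmile c\}\;=\;\{a,b\}\smallsmile c \;+\;(-1)^{(a-1)b}\, b\smallsmile\{a,c\}
\]
for the Gerstenhaber bracket on $H^{\bullet}(\mathcal{G}_M)$, applied to each of the three cyclically permuted terms of the Jacobiator, then lets me express the total Jacobiator of $[\cdot,\cdot]$ as $j$ applied to the Jacobiator for $\{\cdot,\cdot\}$ on $H^{\bullet}(\mathcal{G}_M)$, which vanishes because $(H^{\bullet}(\mathcal{G}_M),\{\cdot,\cdot\},\smallsmile,\Bc)$ is a BV algebra by Theorem~\ref{ruwenogien}.

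\emph{Main obstacle.} The conceptual content is essentially the triple interplay of $j\circ\Bc=0$, the BV relation, and the graded Poisson rule; nothing deeper is needed. The real difficulty is purely combinatorial: carefully tracking Koszul signs through the degree shift $\mathcal{G}_M=M[-1]$ and through the degree $-2$ nature of $[\cdot,\cdot]$, and verifying that the telescoping chain of BV manipulations used inside the Jacobiator closes up consistently rather than leaving residual $\Bc$-terms that survive under $j$.
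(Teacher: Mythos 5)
Your argument is correct and follows the paper's strategy in all essentials: antisymmetry from the graded commutativity of $\smallsmile$ together with the degree shift of $\beta$, the compatibility $\beta[\cdot,\cdot]=\{\beta(\cdot),\beta(\cdot)\}$ from $\beta j=\Bc$, $j\beta=0$ and the BV identity \rmref{avvisoagliutenti1}, and the Jacobi identity from the interplay of $j\beta=0$ with the second-order behaviour of $\Bc$. The one genuine difference lies in the key identity invoked for Jacobi: the paper starts from Koszul's seven-term identity for the BV operator \cite{Kos:CDSNEC}, substitutes $a=\beta x$, $b=\beta y$, $c=\beta z$, kills the terms containing $\beta j\beta$ via $j\beta=0$, and then applies $j$ once more to read off Jacobi directly from the definition of the bracket; you instead use the defining BV relation (collapsed by $\Bc\beta=0$) together with the graded Leibniz rule for $\{\cdot,\cdot\}$ on $H^\bullet(\mathcal{G}_M)$. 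These two inputs are equivalent --- the seven-term identity is exactly the statement that $\Bc$ is a second-order operator, which for a BV algebra is the Leibniz rule in disguise --- so neither route is more general; the paper's version never needs to write down the Poisson compatibility, while yours makes visible exactly where the hypothesis that $H^\bullet(\mathcal{G}_\g)$ be a Gerstenhaber (and not merely Lie-plus-associative) algebra enters. Two minor caveats: after the Leibniz expansion the Jacobiator of $[\cdot,\cdot]$ cancels termwise against $[[x,y],z]$ and $[y,[x,z]]$ identified via your Step 2, rather than being literally $j$ of the Jacobiator of $\{\cdot,\cdot\}$ as you phrase it (though the computation closes up either way); and, as you yourself flag, the Koszul signs through the shift $\mathcal{G}_M=M^\bullet[-1]$ still need to be checked explicitly --- the paper does this by carrying the degrees in $HC^\bullet_-(\mathcal{G}_M)[2]$ throughout.
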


\begin{proof} 
As $\beta$ is a degree $1$ operator as a linear morphism from $HC^\bullet_-(\mathcal{G}_M)[2]$ to $H^\bullet(\mathcal{G}_M)$, for homogeneous elements $x,y$ in $HC^\bullet_-(\mathcal{G}_M)[2]$, we have
\begin{align*}
[x,y]&=(-1)^xj ((\beta{x})\smallsmile(\beta{y}))\\
&=(-1)^{x+(x+1)(y+1)}j((\beta{y})\smallsmile(\beta{x}))\\
&=-(-1)^{y+xy}j((\beta{y})\smallsmile(\beta{x}))\\
&=-(-1)^{xy}[y,x].
\end{align*}
Now recall the seven terms identity for Batalin-Vilkoviski\u\i\ algebras, that is, the $BV$-operator $\Bc$ is of second order \cite[Eq.~(1.3)]{Kos:CDSNEC}:
\begin{align*}
\Bc({a}&\smallsmile {b}\smallsmile {c})+
((\Bc{a})\smallsmile {b}\smallsmile {c})\\
&+(-1)^a({a}\smallsmile (\Bc{b})\smallsmile {c}))+(-1)^{a+b}
({a}\smallsmile {b}\smallsmile (\Bc{c}))\\\
&-(\Bc({a}\smallsmile {b})\smallsmile {c}) -(-1)^a
({a}\smallsmile \Bc({b}\smallsmile {c}))-(-1)^{(a+1)b}
({b}\smallsmile \Bc({a}\smallsmile {c})) =0.
\end{align*}
Choose then ${a}=\beta{x}$, ${b}=\beta{y}$, ${c}=\beta{z}$, 
and use the identity $\Bc=\beta j$ from Lemma \ref{B-returns} to get
\begin{align*}
\beta j(\beta{x}&\smallsmile \beta{y}\smallsmile \beta{z})+
((\beta j\beta{x})\smallsmile \beta{y}\smallsmile \beta{z})\\
&+(-1)^{x+1}(\beta{x}\smallsmile (\beta j\beta{y})\smallsmile \beta{z}))+(-1)^{x+y}
(\beta{x}\smallsmile \beta{y}\smallsmile (\beta j\beta{z}))\\
&-(\beta j(\beta{x}\smallsmile \beta{y})\smallsmile \beta{z}) -(-1)^{x+1}
(\beta{x}\smallsmile \beta j(\beta{y}\smallsmile \beta{z}))
\\
&
\qqquad\qqquad\qqquad\qqquad
-(-1)^{x(y+1)}
(\beta{y}\smallsmile \beta j(\beta{x}\smallsmile \beta{z})) =0.
\end{align*}
As $j\beta=0$, this simplifies to
\begin{align}\label{gottagivee3}
\beta j(\beta{x}&\smallsmile \beta{y}\smallsmile \beta{z})-(\beta j(\beta{x}\smallsmile \beta{y})\smallsmile \beta{z})\notag
\\
& -(-1)^{x+1}
(\beta{x}\smallsmile \beta j(\beta{y}\smallsmile \beta{z}))-(-1)^{x(y+1)}
(\beta{y}\smallsmile \beta j(\beta{x}\smallsmile \beta{z})) =0.
\end{align}
Apply $j$ to the above expression to get
\begin{align*}
j(\beta j(\beta{x}\smallsmile \beta{y})\smallsmile \beta{z})
& +(-1)^{x+1}j(\beta{x}\smallsmile \beta j(\beta{y}\smallsmile \beta{z}))
\\
&
+(-1)^{x(y+1)}
j(\beta{y}\smallsmile \beta j(\beta{x}\smallsmile \beta{z})) =0,
\end{align*}
by using once more the identity $j\beta=0$. Now recall the definition of the Chas-Sullivan-Menichi bracket to rewrite the above identity as
\[
[x, [y,z]]= [[x,y],z] +(-1)^{xy}[y,[x,z]].
\]
This shows that the bracket $[\cdot,\cdot]$ satisfies the Jacobi identity.
As for the second part, the claimed identity is immediate from Eq.~\rmref{avvisoagliutenti1} along with $j\beta = 0$ and $\beta j = \Bc$ again.
\end{proof}

\begin{theorem}
\label{arkopharma}
If $\{\cdot,\cdot\}$ vanishes identically on $H^\bullet(\mathcal{G}_M)$, then 
\[
\{\!\!\{ x,y\}\!\!\}:=(-1)^x (\Bc{x})\smallsmile(\Bc{y})
\]
defines a degree $-2$ Lie bracket on $H^\bullet(\mathcal{G}_M)$ such that $j\{\!\!\{ x,y\}\!\!\}=[jx,jy]$ and $\Bc\{\!\!\{ x,y\}\!\!\}=0$. This bracket turns $\big(H^\bullet(\mathcal{G}_M)\smallsmile,\{\!\!\{\cdot,\cdot\}\!\!\}\big)$ into an $e_3$-algebra, {\em i.e.},
\begin{eqnarray}
\label{e31}
\{\!\!\{ x,y\}\!\!\} &=& - (-1)^{xy} \{\!\!\{y,x\}\!\!\} \\
\label{e32}
\{\!\!\{ x,\{\!\!\{ y,z\}\!\!\}\}\!\!\} &=& \{\!\!\{\{\!\!\{ x,y\}\!\!\}, z \} \!\!\} + (-1)^{xy}  \{\!\!\{ y,\{\!\!\{ x,z\}\!\!\}\}\!\!\} \\ 
\label{e34}
\{\!\!\{ x,y \smallsmile z \}\!\!\} &=&  \{\!\!\{ x,y\}\!\!\} \smallsmile z + (-1)^{xy} y\smallsmile \{\!\!\{ x,z\}\!\!\}.  
\end{eqnarray}
\end{theorem}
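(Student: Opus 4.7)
The starting observation is that the vanishing hypothesis collapses the BV identity \rmref{avvisoagliutenti1} of Theorem \ref{ruwenogien} into the statement that the degree $-1$ operator $\Bc$ on $H^\bullet(\mathcal{G}_M)$ is a graded derivation of $\smallsmile$, i.e.\
\[
\Bc(a \smallsmile b) = (\Bc a)\smallsmile b + (-1)^a\, a \smallsmile (\Bc b).
\]
Together with $\Bc^2=0$ and the graded commutativity of $\smallsmile$ on $H^\bullet(\mathcal{G}_M)$, this single identity will drive every computation below.

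First I would dispose of the easy properties. For $\Bc\{\!\!\{x,y\}\!\!\}=0$: apply $\Bc$ to the definition and use the derivation rule together with $\Bc^2=0$; both terms vanish. For $j\{\!\!\{x,y\}\!\!\}=[jx,jy]$: this is immediate from the definitions of $\{\!\!\{\cdot,\cdot\}\!\!\}$ and of the Chas-Sullivan-Menichi bracket together with the identity $\beta j=\Bc$ of Lemma \ref{B-returns}. For the antisymmetry \rmref{e31}: compute
\[
\{\!\!\{ y,x\}\!\!\} \,=\, (-1)^y (\Bc y)\smallsmile (\Bc x) \,=\, (-1)^{y+(y-1)(x-1)} (\Bc x)\smallsmile (\Bc y),
\]
using that $\Bc x$ and $\Bc y$ live in degrees $x-1$ and $y-1$ of $\mathcal{G}_M$, and check that the resulting sign equals $-(-1)^{xy}(-1)^x$, so that $\{\!\!\{x,y\}\!\!\}=-(-1)^{xy}\{\!\!\{y,x\}\!\!\}$.

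Next I would verify the Leibniz identity \rmref{e34}. Expanding
\[
\{\!\!\{ x, y\smallsmile z\}\!\!\} = (-1)^x (\Bc x)\smallsmile \Bc(y\smallsmile z)
\]
and applying the derivation rule to $\Bc(y\smallsmile z)$ gives, by associativity,
\[
(-1)^x (\Bc x\smallsmile \Bc y)\smallsmile z \,+\, (-1)^{x+y}(\Bc x)\smallsmile y \smallsmile (\Bc z).
\]
The first summand is $\{\!\!\{x,y\}\!\!\}\smallsmile z$ on the nose. For the second summand, move $y$ past $\Bc x$ by graded commutativity (picking up $(-1)^{y(x-1)}$): the accumulated sign $(-1)^{x+y+y(x-1)}=(-1)^{xy+x}$ is precisely the one that reproduces $(-1)^{xy}\, y\smallsmile\{\!\!\{x,z\}\!\!\}$, yielding \rmref{e34}. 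This is the main bookkeeping step.

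The Jacobi identity \rmref{e32} is then essentially free. Indeed, by the very definition of $\{\!\!\{\cdot,\cdot\}\!\!\}$ and by the fact, already established above, that $\Bc\{\!\!\{\cdot,\cdot\}\!\!\}=0$,
\[
\{\!\!\{ x,\{\!\!\{ y,z\}\!\!\}\}\!\!\} \,=\, (-1)^x (\Bc x)\smallsmile \Bc\{\!\!\{ y,z\}\!\!\} \,=\, 0,
\]
and by the same argument both terms on the right-hand side of \rmref{e32} vanish. Thus \rmref{e32} reduces to $0=0$. The apparent ``hardest'' step of the statement, the Jacobi identity for an $e_3$-structure, is therefore where the general $e_3$-machinery (as in Eq.~\rmref{gottagivee3} in the proof of Theorem \ref{chas-sullivan-menichi}) degenerates to a triviality; the real content of the theorem is the derivation identity \rmref{e34}, whose verification is the only place where the graded commutativity of $\smallsmile$ needs to be carefully coupled with the sign of the bracket.
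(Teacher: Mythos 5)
Your proposal is correct, and for the antisymmetry \rmref{e31}, the Leibniz rule \rmref{e34}, and the identity $j\{\!\!\{ x,y\}\!\!\}=[jx,jy]$ it coincides with the paper's own computations. Where you genuinely diverge is in the two remaining points. For $\Bc\{\!\!\{ x,y\}\!\!\}=0$ the paper goes through the long exact sequence, applying $\beta$ to $j\{\!\!\{ x,y\}\!\!\}=[jx,jy]$ and invoking $\beta[\cdot,\cdot]=\{\beta(\cdot),\beta(\cdot)\}$ from Theorem \ref{chas-sullivan-menichi}, whereas you compute directly with the derivation property and $\Bc^2=0$; your route is more self-contained and equally valid. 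For the Jacobi identity \rmref{e32} the paper substitutes $a=jx$, $b=jy$, $c=jz$ into the reduced seven-term identity \rmref{gottagivee3} and shows that the defect term $-(-1)^y\Bc(\Bc x\smallsmile\Bc y\smallsmile\Bc z)$ vanishes under the hypothesis; you instead observe that, once $\Bc\{\!\!\{\cdot,\cdot\}\!\!\}=0$ is known, every iterated bracket contains a factor $\Bc\{\!\!\{\cdot,\cdot\}\!\!\}$ and hence all three terms of \rmref{e32} vanish individually, so the identity reads $0=0$. Both arguments are sound; yours is shorter and makes explicit that under the vanishing hypothesis the bracket $\{\!\!\{\cdot,\cdot\}\!\!\}$ is two-step nilpotent (a fact the paper's computation contains but does not highlight), while the paper's derivation via the second-order property of $\Bc$ runs parallel to the proof of Theorem \ref{chas-sullivan-menichi} and would survive in situations where the individual terms do not separately vanish. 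One small caveat: your Jacobi argument depends on having established $\Bc\{\!\!\{\cdot,\cdot\}\!\!\}=0$ first, so the order of the steps in your plan (easy properties before \rmref{e32}) is essential, but you have respected it.
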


\begin{proof}
Since the cup product $\smallsmile$ on $H^\bullet(\mathcal{G}_M)$ is graded commutative and $\Bc$ has degree $-1$, we have
\begin{align*}
\{\!\!\{x,y\}\!\!\}&=(-1)^x (\Bc{x})\smallsmile(\Bc{y})\\
&=(-1)^{x+(x-1)(y-1)}(\Bc{y})\smallsmile(\Bc{x})\\
&=-(-1)^{y+xy}(\Bc{y})\smallsmile(\Bc{x})\\
&=-(-1)^{xy}\{\!\!\{y,x\}\!\!\},
\end{align*}
which proves equation \rmref{e31}.
By equation \rmref{gottagivee3}, we have
\begin{align*}
\beta j(\beta{a}&\smallsmile \beta{b}\smallsmile \beta{c})-(\beta j(\beta{a}\smallsmile \beta{b})\smallsmile \beta{c})
\\
& -(-1)^{a+1}
(\beta{a}\smallsmile \beta j(\beta{b}\smallsmile \beta{c}))-(-1)^{a(b+1)}
(\beta{b}\smallsmile \beta j(\beta{a}\smallsmile \beta{c})) =0,
\end{align*}
for any $a,b,c$ in $HC^\bullet_-(\mathcal{G}_M)$. Choosing $a=jx$, $b=jy$, $c=jz$ and recalling that $\beta j=\Bc$, this becomes
\begin{align*}
\Bc(\Bc x&\smallsmile \Bc y\smallsmile \Bc z)-(\Bc(\Bc x\smallsmile \Bc y)\smallsmile \Bc z)
\\
& -(-1)^{x+1}
(\Bc x\smallsmile \Bc(\Bc y\smallsmile \Bc z))-(-1)^{x(y+1)}
(\Bc y\smallsmile \Bc(\Bc x\smallsmile \Bc z)) =0,
\end{align*}
that is,
\[
\{\!\!\{ x, \{\!\!\{ y\,z\}\!\!\}\}\!\!\}=\{\!\!\{\{\!\!\{x,y\}\!\!\}, z\}\!\!\}
 +(-1)^{xy}
\{\!\!\{ y, \{\!\!\{  x, z\}\!\!\}\}\!\!\}
 -(-1)^y\Bc(\Bc x\smallsmile \Bc y\smallsmile \Bc z).
\]
If $\{\cdot,\cdot\}$ vanishes identically on $H^\bullet(\mathcal{G}_M)$, then Eq.~\rmref{avvisoagliutenti1} tells us that $\Bc$ is a derivation of the cup product. Since $\Bc^2=0$, this implies 
\[
\Bc(\Bc x\smallsmile \Bc y\smallsmile \Bc z)=0,
\]
and so
\[
\{\!\!\{ x, \{\!\!\{ y\,z\}\!\!\}\}\!\!\}=\{\!\!\{\{\!\!\{x,y\}\!\!\}, z\}\!\!\}
 +(-1)^{xy}
\{\!\!\{ y, \{\!\!\{  x, z\}\!\!\}\}\!\!\},
\]
which proves equation \rmref{e32}. This shows that $\{\!\!\{\cdot,\cdot\}\!\!\}$
defines a degree $-2$ Lie bracket on $H^\bullet(\mathcal{G}_M)$. Recalling the definition of the Lie bracket $[\cdot,\cdot]$ on $HC^\bullet_-(\mathcal{G}_M)$, we find
\[
[jx,jy]=(-1)^x  j ((\beta{jx})\smallsmile(\beta{jy}))=(-1)^x  j ((\Bc x)\smallsmile(\Bc y))=j\{\!\!\{ x,y\}\!\!\}.
\]
Applying $\beta$ to both sides and recalling that $
\beta[\cdot, \cdot] = \{ \beta(\cdot), \beta(\cdot)\}$, we have
\[
\Bc\{\!\!\{ x,y\}\!\!\}=\beta j\{\!\!\{ x,y\}\!\!\}=\beta[jx,jy]=\{ \beta jx, \beta jy\}=\{ \Bc x, \Bc y\}=0
\]
since we are assuming $\{\cdot,\cdot\}$ vanishes identically on $H^\bullet(\mathcal{G}_M)$. 
Finally, using again that if $\{\cdot,\cdot\}=0$ then $\Bc$ is a derivation of the cup product,
\begin{align*}
\{\!\!\{ x,y\smallsmile z\}\!\!\}&=(-1)^x (\Bc{x})\smallsmile(\Bc({y\smallsmile z}))\\
&=(-1)^x (\Bc{x})\smallsmile((\Bc y)\smallsmile z+(-1)^y y\smallsmile(\Bc z))\\
&=(-1)^x (\Bc{x})\smallsmile(\Bc y)\smallsmile z+(-1)^{x+y} (\Bc x)\smallsmile y\smallsmile(\Bc z)\\
&=\{\!\!\{x,y\}\!\!\} \smallsmile z+(-1)^{x(y-1)} y \smallsmile (\Bc x) \smallsmile(\Bc z)\\
&=\{\!\!\{x,y\}\!\!\} \smallsmile z+(-1)^{xy} y \smallsmile \{\!\!\{ x, z \}\!\!\}, 
\end{align*}
which proves the last identity \rmref{e34}.
\end{proof}

\begin{rem}
In the proof of Theorem \ref{arkopharma}, we have repeatedly used that $\Bc$ is a derivation of the cup product as soon as $\{\cdot,\cdot\}$ vanishes. Observe that then $\Bc$ also becomes (trivially) a derivation of the bracket $\{\!\!\{ \cdot,\cdot\}\!\!\}$. Namely, one has 
\begin{equation*}
\Bc \{\!\!\{ x,y\}\!\!\} =  \{\!\!\{ \Bc x,y\}\!\!\} + (-1)^{x} \{\!\!\{ x, \Bc y\}\!\!\} 
\end{equation*}
as both sides are zero: that the left hand side is zero was just shown above, whereas for the right hand side this follows from  $\Bc^2=0$.
\end{rem}


\section{Brackets for opposite modules over operads}
\label{pling}

In this section, we will see that the notion of a {\em cyclic opposite module} over an operad with multiplication provides a quite general class of examples that illustrates the constructions in \S\ref{fame}. For a collection of basic notions in operad theory that will be used in the sequel, see Appendix \ref{pamukkale}.

\subsection{Cyclic opposite modules over operads with multiplication}
\label{sansonitedescoitalianoitalianotedesco}

Let $\cO$ be an operad in the sense described in \S\ref{pamukkale1}.

\begin{definition}
\label{molck}
An {\em opposite (left) $\cO$-module} is
a sequence of $\K$-modules $\cN = \{ \cN(n) \}_{n \geq 0}$ along with $\K$-linear
operations,  
$$
        \bullet_i : 
        \cO(p) \otimes \cN(n) \to \cN({n-p+1}), \quad \mbox{for} \ 
i = 1, \ldots, n- p +1, \quad 0 \leq p \leq n, 
$$
that are declared to be zero if $p > n$, and 
for $\gvf \in \cO(p)$, $\psi \in \cO(q)$, and $x \in
\cN(n)$ subject to the identities  
\begin{equation}
\label{TchlesischeStr}
\gvf \bullet_i (\psi \bullet_j x) = 
\begin{cases} 
\psi \bullet_j (\gvf \bullet_{i + q - 1}  x) \quad & \mbox{if} \ j < i, 
\\
(\gvf \circ_{j-i+1} \psi) \bullet_{i}  x \quad & \mbox{if} \ j - p < i \leq j, \\
\psi \bullet_{j-p + 1} (\gvf \bullet_{i}  x) \quad & \mbox{if} \ 1 \leq i \leq j - p,
\end{cases}
\end{equation}
where $p > 0$, $q \geq 0$, $ n \geq 0$. 
In case $p=0$, the index $i$ runs from $1$ to $n+1$, and the above relations have to be read as
 \begin{equation}
\label{TchlesischeStr0}
\gvf \bullet_i (\psi \bullet_j x) = 
\begin{cases} 
\psi \bullet_j (\gvf \bullet_{i + q - 1}  x) \quad & \mbox{if} \ j < i, 
\\
\psi \bullet_{j+1} (\gvf \bullet_{i}  x) \quad & \mbox{if} \ 1 \leq i \leq j.
\end{cases}
\end{equation}
An opposite $\cO$-module is called {\em unital} if  
 \begin{equation}
\label{TchlesischeStr-1}
\mathbb{1} \bullet_i x = x, \quad \mbox{for} \ i = 1, \ldots, n,
\end{equation}
for all $x \in \cN(n)$.
\end{definition}

\begin{example}
\label{schnee1}
For a $\K$-module $X$, one defines the {\em endomorphism operad} $\mathcal{E}\hspace*{-1pt}{nd}_\ikks$ by  $\mathcal{E}\hspace*{-1pt}{nd}_\ikks(p) := \Hom(X^{\otimes p}, X)$ with identity element $\mathbb{1} := \id_\ikks$. A unital opposite module over this operad is defined by $\cN_\ikks(n) := X^{\otimes n +1 }$ along with
composition maps defined by
$$
\gvf \bullet_i (x_0, \ldots, x_n) := (x_0, \ldots, x_{i-1}, \gvf(x_{i}, \ldots, x_{i+p-1}), x_{i+p}, \ldots, x_n),
$$
for  $i= 1, \ldots, n -p +1$, 
where $\gvf \in \mathcal{E}\hspace*{-1pt}{nd}_\ikks(p)$ and $x := (x_0, \ldots, x_n) := x_0 \otimes \cdots \otimes x_n \in \cN_\ikks(n)$. 
When $X$ happens to be an associative $\K$-algebra, the operad $\mathcal{E}\hspace*{-1pt}{nd}_\ikks$ becomes an operad with multiplication.
\end{example}

The preceding example suggests the following picture for an opposite module:

\begin{center}
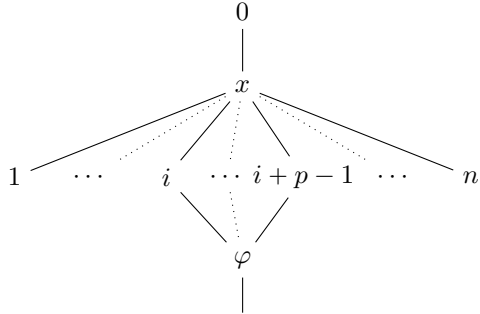

\begin{tikzpicture}
\node(0) at (0, 0) {$0$};
\node(x) at (0,-1) {$x$};
\node(1) at (-3,-2.2) {$1$};
\node(2) at (-2,-2.2) {$\cdots$};
\node(i) at (-1,-2.2) {$i$};
\node(4) at (-0.2,-2.2) {$\cdots$};
\node(i+p-1) at (0.8,-2.2) {$i+p-1$};
\node(6) at (2,-2.2) {$\cdots$};
\node(n) at (3,-2.2) {$n$};
\node(phi) at (0,-3.3) {$\varphi$};

\draw(x)--(1);
\draw[dotted](x)--(2);
\draw(x)--(i);
\draw[dotted](x)--(4);
\draw(x)--(i+p-1);
\draw[dotted](x)--(6);
\draw(x)--(n);
\draw(i)--(phi);
\draw[dotted](4)--(phi);
\draw(i+p-1)--(phi);
\draw(0)--(x);

\draw(phi)--(0,-4);
\end{tikzpicture}
\captionof{figure}{Opposite modules}\label{tratto}
\end{center}

\begin{definition}
\label{piantadiroma}
A {\em para-cyclic (unital, left) opposite $\cO$-module} 
is a (unital, left) opposite $\cO$-module $\cN$ 
endowed with two additional structures:
first, an {\em extra} ($\K$-linear) composition map
$$
\bullet_0: \cO(p) \otimes \cN(n) \to \cN({n-p+1}), \quad 0 \leq p \leq n+1,
$$
declared to be zero if $p > n+1$, 
and such that the relations
\rmref{TchlesischeStr}--\rmref{TchlesischeStr-1} are fulfilled for $i=0$ as well, that is, 
for $\gvf \in \cO(p)$, $\psi \in \cO(q)$, and $x \in \cN(n)$, with $q \geq 0$, $n \geq 0$, 
\begin{eqnarray}
\label{SchlesischeStr}
\gvf \bullet_i (\psi \bullet_j x) \!\!\!\!&=\!\!\!\!& 
\begin{cases} 
\psi \bullet_j (\gvf \bullet_{i + q - 1}  x)  & \mbox{if} \ j < i, 
\\
(\gvf \circ_{j-i+1} \psi) \bullet_{i}  x  & \mbox{if} \ j - p < i \leq j, \\
\psi \bullet_{j-p + 1} (\gvf \bullet_{i}  x)  & \mbox{if} \ 0 \leq i \leq j - p,
\end{cases} \qquad \mbox{(cases for $p > 0$)\qqquad}
\\
\label{SchlesischeStr0}
\gvf \bullet_i (\psi \bullet_j x) \!\!\!\!&=\!\!\!\!& 
\begin{cases} 
\psi \bullet_j (\gvf \bullet_{i + q - 1}  x)  & \mbox{if} \ j < i, 
\\
\psi \bullet_{j+1} (\gvf \bullet_{i}  x)  & \mbox{if} \ 0 \leq i \leq j,
\end{cases}
\qqquad\quad  \mbox{(cases for $p = 0$)}
\\
\label{SchlesischeStr-1}
\mathbb{1} \bullet_i x \!\!\!\!&=\!\!\!\!&  x \hspace*{3cm} \, \mbox{for \ } i = 0, \ldots, n,
\end{eqnarray}
and second, a degree-preserving morphism $t: \cN(n) \to \cN(n)$ for all $ n \geq 1$ such that
\begin{equation}
\label{lagrandebellezza1}
t(\gvf \bullet_{i} x) = \gvf \bullet_{i+1} t(x),  \qquad i = 0, \ldots, n-p,
\end{equation}
is true for $\gvf \in \cO(p)$ and $x \in \cN(n)$. 
We call a para-cyclic opposite $\cO$-module {\em cyclic} if
\begin{equation}
\label{lagrandebellezza2}
t^{n+1} = \id
\end{equation}
holds on $\cN(n)$.
\end{definition}

\begin{example}
\label{schnee2}
The opposite module $\cN_\ikks$ from Ex.~\ref{schnee1} can be easily extended to a cyclic opposite module over  $\mathcal{E}\hspace*{-1pt}{nd}_\ikks$: 
define
$
\gvf \bullet_0 (x_0, \ldots, x_n) := \big(\gvf(x_0, \ldots, x_{p-1}), x_p, \ldots, x_n\big),
$
along with 
$
t(x_0, \ldots, x_n) := (x_n, x_0, \ldots, x_{n-1}).
$
Observe, however, that in more general contexts such as for Hopf algebroids things are not as simple; see \cite[Eq.~(6.20)]{Kow:GABVSOMOO}. 
\end{example}

Graphically, the condition \rmref{lagrandebellezza1} can be understood by a picture similar to those that depict cyclic operads:

\begin{center}
\begin{tikzpicture}
\node(0) at (-6, -3) {$0$};
\node(x) at (-4,-1.4) {$x$};
\node(2) at (-5.5,-3) {$\cdots$};
\node(i) at (-5,-3) {$i$};
\node(4) at (-4.6,-3) {$\cdots$};
\node(i+p-1) at (-3.7,-3) {$i+p-1$};
\node(6) at (-2.7,-3) {$\cdots$};
\node(n-1) at (-2,-3) {$n-1$};
\node(n) at (-4,0) {$n$};
\node(phi) at (-4.5,-4.3) {$\varphi$};

\draw[dotted](x)--(2);
\draw(x)--(i);
\draw[dotted](x)--(4);
\draw(x)--(i+p-1);
\draw[dotted](x)--(6);
\draw(x)--(n-1);
\draw(i)--(phi);
\draw[dotted](4)--(phi);
\draw(i+p-1)--(phi);

\draw(phi)--(-4.5,-5.3);




\draw  (-3.7,-1.5) to [out=330,in=270] (-1.5,-2) to [out=90,in=270] (-4,-0.2) ;
\draw (x) to [out=90, in=38] (-5.8, -2.8);

\node(0) at (1, -3) {$0$};
\node(x) at (3,-1.4) {$x$};
\node(2) at (1.5,-3) {$\cdots$};
\node(i+1) at (2.2,-3) {$i+1$};
\node(4) at (2.85,-3) {$\cdots$};
\node(i+p) at (3.5,-3) {$i+p$};
\node(6) at (4.3,-3) {$\cdots$};
\node(n-1) at (5,-3) {$n-1$};
\node(n) at (3,0) {$n$};
\node(phi) at (2.8,-4.3) {$\varphi$};
\node(=) at (-0.3, -3) {$=$};

\draw[dotted](x)--(2);
\draw(x)--(i+1);
\draw[dotted](x)--(4);
\draw(x)--(i+p);
\draw[dotted](x)--(6);
\draw(x)--(n-1);
\draw(i+1)--(phi);
\draw[dotted](4)--(phi);
\draw(i+p)--(phi);

 \draw(phi)--(2.8,-5.3);



\draw  (3.3,-1.5) to [out=330,in=270] (5.5,-2) to [out=90,in=270] (3,-0.2) ;
\draw (x) to [out=90, in=38] (1.2, -2.8);

\draw [thick, rounded corners] (-6.3, -0.5) rectangle (-1.2, -4.8);
\draw [thick, rounded corners] (0.7, -0.5) rectangle (5.8, -3.7);

 \end{tikzpicture}
 
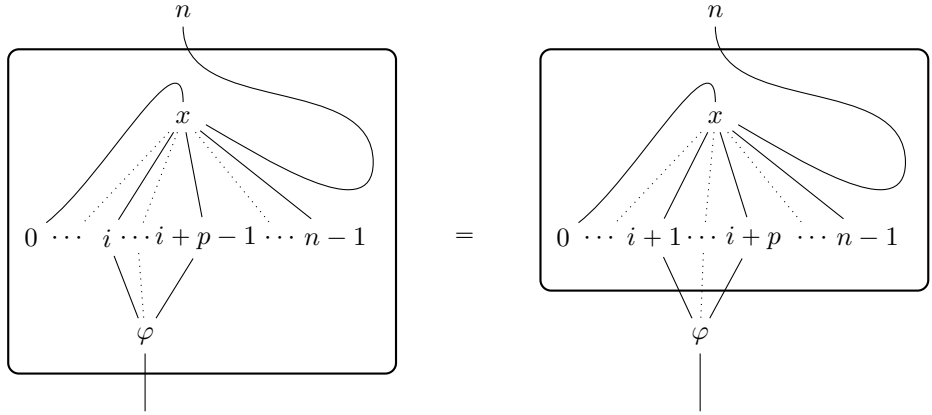
\captionof{figure}{The relation $t(\gvf \bullet_{i} x) = \gvf \bullet_{i+1} t(x)$ for cyclic opposite modules}\label{rel}
 \end{center}

In particular \cite[Prop.~3.5]{Kow:GABVSOMOO}, a cyclic
unital opposite module 
$\cN$ 
over an operad with multiplication 
$(\cO, \mu, e)$ carries the structure of a cyclic $\K$-module
 with cyclic operator $t: \cN(n) \to \cN(n)$, along with faces $d_i: \cN(n) \to \cN({n-1})$ and degeneracies $s_j: \cN(n) \to \cN({n+1})$ of the underlying simplicial object given by 
\begin{equation}
\label{colleoppio}
\begin{array}{rcll}
d_i(x) & = & \mu \bullet_{i} x, & i = 0, \ldots, n-1, \\
d_n(x) & = & \mu \bullet_0 t (x), & \\
s_j (x)& = & e \bullet_{j+1} x, & j = 0, \ldots, n, \\
\end{array}
\end{equation}
where $x \in \cN(n)$. Defining 
the differential $b\colon \cN(n)\to \cN({n-1})$ as
$
b=\sum_{i=0}^n (-1)^id_i,
$
the pair $(\cN, b)$ becomes a chain complex.
In a standard way, by setting $B := (\mathrm{id}-t) \, s_{-1} \, N$, where 
$N := \sum_{i=0}^n (-1)^{in} t^i$ and $s_{-1} := t \, s_n$, the triple $(\cN, b, B)$ 
becomes a mixed (chain) complex. Explicitly, one has by \rmref{lagrandebellezza2}
\begin{equation}
\label{cine40}
       s_{-1}(x) = t \, s_n(x) = t(e \bullet_{n+1} x) =  t(e \bullet_{n+1} t^{n+1}x)
=  t^{n+2}(e \bullet_0 x) = e \bullet_0 x,
\end{equation}
and hence
$ 
        B(x) = \sum^n_{i=0} (-1)^{in} (\mathrm{id}-t) e \bullet_0 t^i(x).
$

\begin{rem}
\label{normalised}
To simplify all expressions involved, we will work on the {\em normalised} complex $\widebar{\cN}$
of $\cN$, meaning its quotient by the (acyclic) subcomplex spanned by the
images of the degeneracy maps of this simplicial 
$\K$-module.
For example, the coboundary
$B$ coincides on $\widebar{\cN}$ 
with the map (induced by) $s_{-1} \, N$, and so by abuse of notation, we 
will denote the latter by 
$B$ as well, and write
\begin{equation}
\label{extra2}
B(x) = \sum^n_{i=0} (-1)^{in} e \bullet_0 t^i(x).
\end{equation}

%
Similarly, $\widebar{\cO}$ denotes the intersection of the kernels of the codegeneracies in the cosimplicial $\K$-module $\cO$. Notice that $\widebar{\cO}(0)=\cO(0)$, as the intersection of an empty family of subsets of $\cO(0)$ is $\cO(0)$ itself.
\end{rem}

\subsection{The calculus operations for cyclic opposite modules over operads}
\label{pasticciottoleccese}

For the rest of this section, let $(\cO, \mu, e)$ be an operad with multiplication and $\cN$ a cyclic unital opposite $\cO$-module. 

\begin{definition}
\
\label{volterra}
\begin{enumerate}
\compactlist{99}
\item
The {\em cap product} or {\em contraction operator}
\begin{align*}
i\colon  \cO(p)\otimes  \cN(n)& \to \cN({n-p})\\
(\gvf,x)&\mapsto i_\varphi x:=\gvf \smallfrown x
\end{align*}
of $\varphi \in \cO(p)$ with 
$x \in \cN(n)$ is defined by
\begin{equation}
\label{alles4}
\gvf \smallfrown x := 
(\mu \circ_2 \gvf) \bullet_0 x.  
\end{equation}
\item
The {\em Lie derivative}  
\begin{align*}
        L : \cO(p)\otimes \cN(n) &\rightarrow 
        \cN({n-p+1})\\
        (\varphi,x)&\mapsto L_\varphi x
\end{align*}
of $x \in \cN(n)$
along $\gvf \in \cO(p)$ with $p < n+1$ is defined to be 
\begin{equation}
\label{messagedenoelauxenfantsdefrance2}
        L_\varphi x :=   \!\!      
\sum^{n-p+1}_{i=1} \!\! (-1)^{(p-1)(i-1)} \varphi \bullet_i x 
+ 
\sum^{p}_{i=1} (-1)^{n(i-1) + p - 1} \varphi \bullet_0 t^{i-1} (x).
\end{equation}
In case $p = n+1$, this means that
$$
       L_\varphi x := (-1)^{p-1} \varphi \bullet_0  N(x),
$$
and for $p > n+1$, we define 
$L_\varphi := 0$. 
\item
Finally, define 
\begin{align*}
         S\colon \cO(p)\otimes \cN(n) & \rightarrow 
        \cN({n-p+2})\\
        (\varphi,x)&\mapsto S_\varphi x
\end{align*}
for every $\varphi \in  \cO(p)$ and $x\in \cN(n)$, as follows:
for $0 \leq p \le n$ by
\begin{equation}
\label{capillareal1}
        S_\varphi x := 
        \sum^{n-p+1}_{j=1} \, 
        \sum^{n - p+1}_{i=j} (-1)^{ n(j-1) + (p-1)(i-1)} e \bullet_0 \big(\gvf \bullet_i t^{j-1}(x)\big),
\end{equation}
while, if $p>n$, set
$
        S_\varphi := 0.
$
\end{enumerate}
\end{definition}

A direct verification using the Eqs.~\rmref{SchlesischeStr} as well as those in \rmref{danton} leads to the identity:

\begin{lemma}
For any $\gvf,\psi\in \cO$ one has
\begin{equation}
\label{missing-equation-see-email}
i_\gvf i_\psi=i_{\gvf\smallsmile \psi}.
\end{equation}
\end{lemma}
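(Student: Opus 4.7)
The plan is to reduce the lemma to a purely operadic identity by a direct use of the opposite module axioms, and then close with operadic associativity and the associativity of $\mu$. First I would unfold both sides by the cap-product formula \rmref{alles4}, so that the claim becomes
\[
(\mu\circ_2\gvf)\bullet_0\bigl((\mu\circ_2\psi)\bullet_0 x\bigr)
=
\bigl(\mu\circ_2(\gvf\smallsmile\psi)\bigr)\bullet_0 x
\]
on any $x\in\cN(n)$.

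Since $\mu\circ_2\gvf\in\cO(p+1)$ has positive arity $p'=p+1$, I can apply the middle case of \rmref{SchlesischeStr} on the left hand side with $i=j=0$; this is the admissible case because $j-p'=-p-1<0\leq 0=j$. Doing so collapses the nested $\bullet_0$ action into a single action of the operadic composition, so the statement reduces to the purely operadic identity
\[
(\mu\circ_2\gvf)\circ_1(\mu\circ_2\psi)=\mu\circ_2(\gvf\smallsmile\psi)
\]
in $\cO(p+q+1)$.

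This remaining identity I would establish by repeatedly applying the parallel/sequential axioms of an operad (Eqs.~\rmref{danton} in Appendix \ref{pamukkale1}) together with the associativity of the multiplication $\mu\circ_1\mu=\mu\circ_2\mu$. Concretely, I would pull the outer $\mu\circ_2(-)$ out of the composition, rewriting $(\mu\circ_2\gvf)\circ_1(\mu\circ_2\psi)$ step by step as $\mu\circ_2 G$ for some $G\in\cO(p+q)$, and then identify $G$ with $\gvf\smallsmile\psi$ via the standard description of the cup product on an operad with multiplication. Substituting back into the previous display gives the result.

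The only delicate point is the bookkeeping in this last step, since several cases of operadic associativity intervene and one must keep track of the correct composition indices at each stage; apart from that, the argument is a linear rewriting and I do not foresee a genuine obstacle.
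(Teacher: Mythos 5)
Your proposal is correct and follows exactly the route the paper indicates (the paper only states that the identity follows by "direct verification using Eqs.~\rmref{SchlesischeStr} as well as those in \rmref{danton}"): you collapse the nested $\bullet_0$-actions via the middle case of \rmref{SchlesischeStr} with $i=j=0$, and then verify the operadic identity $(\mu\circ_2\gvf)\circ_1(\mu\circ_2\psi)=\mu\circ_2\bigl((\mu\circ_2\gvf)\circ_1\psi\bigr)=\mu\circ_2(\gvf\smallsmile\psi)$ using \rmref{danton} and $\mu\circ_1\mu=\mu\circ_2\mu$. The bookkeeping you flag does go through without obstruction.
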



On top of that, 
the following was proven in {\cite[Thm.~5.4]{Kow:GABVSOMOO}}.

\begin{theorem}
 Restricting to normalised elements $\gvf \in \widebar{\cO}$, one obtains 
the {\em Cartan-Rinehart homotopy} formulae 
\begin{equation}
\label{dellera-operad}
\begin{cases}
L_\gvf= [B, i_\gvf]+[b,  S_\gvf] - S_{\gd \gvf},\\
[b,i_\gvf]-i_{\gd \gvf}=0,\\
[B, S_\gvf]=0,
\end{cases}
\end{equation}
%
on the normalised complex $\widebar{\cN}$. 
\end{theorem}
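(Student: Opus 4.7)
The plan is to verify each of the three identities by direct expansion using the defining axioms \rmref{SchlesischeStr}--\rmref{SchlesischeStr-1} of a cyclic opposite $\cO$-module together with the compatibility \rmref{lagrandebellezza1}--\rmref{lagrandebellezza2} between the composition maps and the cyclic operator $t$. Recall that the cosimplicial coboundary on $\cO$ arising from $\mu$ takes the form $\gd\gvf = \mu\circ_2\gvf + \sum_{i=1}^{p}(-1)^i \gvf\circ_i\mu + (-1)^{p+1}\mu\circ_1\gvf$, and that on the normalised complex $\widebar{\cN}$ one has $e\bullet_i(-)=0$ for every $i\ge 1$, which will eliminate many boundary terms.

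For the second identity $[b,i_\gvf] = i_{\gd\gvf}$, I would expand $b\bigl((\mu\circ_2\gvf)\bullet_0 x\bigr)$ by applying the product--composition rule \rmref{SchlesischeStr} to each $\mu\bullet_i\bigl((\mu\circ_2\gvf)\bullet_0 x\bigr)$, and compare with $(\mu\circ_2\gvf)\bullet_0 b(x)$. The ``middle'' terms, in which the inserted $\mu$ lands inside $\gvf$, reassemble as $\mu\circ_2(\gvf\circ_i\mu)\bullet_0 x$, producing the summands $\gvf\circ_i\mu$ of $\gd\gvf$, while the ``outer'' terms (including the cyclic face $d_n$) reassemble via \rmref{lagrandebellezza1} into the contributions of $\mu\circ_1\gvf$ and $\mu\circ_2\gvf$. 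This is essentially the operadic version of the classical Gerstenhaber calculation showing that $i$ is a morphism of cochain complexes, which we already used in \rmref{missing-equation-see-email}.

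For the third identity $[B,S_\gvf] = 0$, I would observe that both $B$ and $S_\gvf$ begin with an application of $e\bullet_0$. Expanding $BS_\gvf x$ and $S_\gvf Bx$ each produces a triple sum of terms of the form $e\bullet_0\bigl(e\bullet_0(\gvf\bullet_i t^k(x))\bigr)$. Using the $p=0$ case \rmref{SchlesischeStr0} with $\psi=e$ to commute the two $e\bullet_0$'s past each other, together with $t$-equivariance \rmref{lagrandebellezza1}, the combinatorial sums cancel in pairs after relabelling by cyclicity $t^{n+1}=\id$.

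The first identity is the Cartan-Rinehart homotopy and is the only substantial computation; it is where I expect the real difficulty to lie. My approach is to expand each of the four composite operators $Bi_\gvf x$, $i_\gvf Bx$, $bS_\gvf x$, $S_\gvf bx$ as double or triple sums indexed by the insertion position of $\gvf$, the position of the extra $\mu$ or $e$, and the power of $t$. I would then partition these terms by the relative position of $\gvf$ and the inserted element. Terms in which the inserted $\mu$ (from $b$) is non-adjacent to $\gvf$ cancel in pairs between $bS_\gvf x$ and $S_\gvf bx$ via the two non-composition cases of \rmref{SchlesischeStr}; the terms in which $\mu$ lands immediately adjacent to $\gvf$ reassemble, using the three summands $\mu\circ_1\gvf$, $\mu\circ_2\gvf$, $\gvf\circ_i\mu$, into precisely the contribution $-S_{\gd\gvf}$. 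The remaining ``boundary'' terms from $[B,i_\gvf]$, after rotating $e\bullet_0$ past $t^i$ via \rmref{lagrandebellezza1} and collapsing by $t^{n+1}=\id$, give exactly the two sums defining $L_\gvf$. The principal obstacle is purely combinatorial: one must carefully track the signs $(-1)^{n(j-1)+(p-1)(i-1)}$ at the boundaries of the summation ranges $j=1$, $j=n-p+1$, $i=j$, $i=n-p+1$, and verify that the normalisation condition $e\bullet_i=0$ for $i\ge 1$ kills exactly those residual terms that would obstruct the identity, which is the reason the homotopy formulae hold only after passage to $\widebar{\cN}$.
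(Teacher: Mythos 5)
The first thing to note is that the paper does not prove this theorem at all: it is quoted verbatim from \cite[Thm.~5.4]{Kow:GABVSOMOO}, so there is no internal proof to compare your attempt against. Your strategy --- direct expansion of the composites using the exchange relations \rmref{SchlesischeStr}--\rmref{SchlesischeStr-1}, the $t$-equivariance \rmref{lagrandebellezza1}, cyclicity \rmref{lagrandebellezza2}, and the vanishing of $e\bullet_i$ for $i\geq 1$ on $\overline{\cN}$ --- is the only available one, is what the cited source does, and is exactly the method this paper uses for the companion identities of Theorem \ref{nightmare}. So the route is right.

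Two caveats. First, the one concrete formula you commit to disagrees with the paper's conventions: from $\gd_0\gvf=\mu\circ_1\gvf$, $\gd_i\gvf=\gvf\circ_{p-i+1}\mu$, $\gd_{p+1}\gvf=\mu\circ_2\gvf$ and $\gd=\sum_i(-1)^i\gd_i$ one gets $\gd\gvf=\mu\circ_1\gvf+\sum_{j=1}^p(-1)^{p-j+1}\gvf\circ_j\mu+(-1)^{p+1}\mu\circ_2\gvf$, which is $(-1)^{p+1}$ times the expression you wrote; yours is the operator called $\bc$ in \S\ref{dresdenerstollen}, where it is recorded that $\bc=(-1)^{q-1}\gd$. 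For even $p$ this sign propagates into $i_{\gd\gvf}$ and $S_{\gd\gvf}$ and must be repaired before the second and first identities can close. Second, and more substantially, what you have written is a plan rather than a proof: each identity is left at the stage ``expand and the terms reassemble or cancel,'' and for this theorem that bookkeeping \emph{is} the entire content --- judging from the analogous computation for $T$ carried out in the appendix, it amounts to several pages of sign- and index-tracking. A small indication that the details matter: for $[B,S_\gvf]=0$ the two composites do not cancel against each other in pairs as you suggest; on $\overline{\cN}$ each of $BS_\gvf$ and $S_\gvf B$ vanishes separately, because $e\bullet_0(e\bullet_k(\cdot))=e\bullet_{k+1}(e\bullet_0(\cdot))$ is degenerate and $\gvf\circ_l e=0$ for normalised $\gvf$. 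In summary, you have identified the correct strategy and the correct sources of cancellation, but the verification itself --- which is where the theorem lives --- remains to be done.
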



\begin{rem}
\label{formsagain}
With the right input data (see \cite[\S6]{KowKra:BVSOEAT}), this can be seen as a generalisation of the classical calculus from differential geometry as in the first part of Example \ref{forms'n'fields} of ``fields acting on forms''.
\end{rem}

Whereas the classical relation $[\cL_X, \iota_Y] = \iota_{[X,Y]}$ between Lie derivative and contraction from differential geometry for two (multi-)vector fields $X, Y$ transfers identically to the operadic context when descending to (co)homology (see \cite[Thm.~4.5]{Kow:GABVSOMOO}), the algebraic HKR theorem hints at the fact that this cannot be that simple when staying on the (co)chain level. More precisely, one needs to introduce an additional homotopy:

\begin{theorem}
\label{nightmare}
Define the {\em Gel'fand-Daletski\u\i-Tsygan} homotopy $T\colon \cO\otimes\cO\otimes \cN\to \cN[-2]$ as 
\begin{equation}
\label{loesungsschluessel}
\begin{split}
&T: \cO(p) \otimes \cO(q) \otimes \cN(n) \to \cN(n-p-q+2), \\
&\quad
(\gvf,\psi,x) \mapsto 
\displaystyle \sum^{p-1}_{j=1}  \sum^{p-1}_{i=j} (-1)^{n(j-1) + (q-1)(i-j) + p} (\gvf \circ_{p-i+j} \psi) \bullet_0 t^{j-1}(x).
\end{split}
\end{equation}
Writing $T(\gvf, \psi)(x) := T(\gvf, \psi, x)$,
one separately has for all $\gvf, \psi \in \cO$
\begin{equation}
\label{panem2}
[i_\psi, L_\gvf] - i_{\{\psi, \gvf\}} = [b, T(\gvf,\psi)] - T(\gd \gvf, \psi) - (-1)^{\gvf-1} T(\gvf, \delta \psi) 
\end{equation}
on $\cN$ along with 
\begin{equation}
\label{etcircenses}
[S_\psi, L_\gvf] - S_{\{\psi,\gvf\}} = [B, T(\gvf,\psi)], 
\end{equation}
for $\gvf, \psi \in \widebar{\cO}$ on the normalised complex $\overline{\cN}$.
\end{theorem}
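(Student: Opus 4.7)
Both identities are proven by a direct, if lengthy, computation expanding the left-hand sides via the definitions in Def.~\ref{volterra} together with the boundary formulas \rmref{colleoppio} and \rmref{extra2}, and then repeatedly applying the opposite-module relations \rmref{SchlesischeStr}--\rmref{SchlesischeStr-1}, the cyclic compatibility \rmref{lagrandebellezza1}--\rmref{lagrandebellezza2}, and the operad axioms \rmref{danton} (together with the associativity/unitality of $\mu$ and $e$). The overall strategy is the same in both cases: each term produced by the expansion is of one of three types, and a careful bookkeeping of how many times each type appears on each side produces the claimed identity.

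For \rmref{panem2}, the first step is to split $L_\gvf$ into its ``internal'' piece $L^{\mathrm{int}}_\gvf x = \sum_{i\ge 1}(-1)^{(p-1)(i-1)}\gvf\bullet_i x$ and its ``cyclic tail'' $L^{\mathrm{cyc}}_\gvf x = \sum_{i\ge 1}(-1)^{n(i-1)+p-1}\gvf\bullet_0 t^{i-1}(x)$, and analogously for $i_\psi = (\mu\circ_2\psi)\bullet_0$, expanding $i_\psi L_\gvf x$ and $L_\gvf i_\psi x$. Applying \rmref{SchlesischeStr} to push $\psi$ past $\gvf$ produces three classes of contributions: (a) terms where $\psi$ and $\gvf$ act in disjoint positions, which cancel pairwise between the two composites; (b) terms where $\psi$ is ``plugged into'' $\gvf$ (or conversely), giving exactly $i_{\gvf\circ_{j-i+1}\psi}$; and (c) terms where $\psi$ is plugged into $\mu$, giving a piece of $b\circ T(\gvf,\psi)$ or $T(\gvf,\psi)\circ b$ together with boundary corrections $T(\delta\gvf,\psi)$ and $T(\gvf,\delta\psi)$ (these arise because the definition of $\gd$ in an operad with multiplication is $\gd=[\mu,\cdot]$ up to signs, see \rmref{colleoppio}). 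The type (b) sum is then reorganised, after a change of summation index, into exactly $i_{\{\psi,\gvf\}}$, using the definition of the Gerstenhaber bracket on $\cO$ as an alternating sum of $\circ_i$-compositions. The crucial combinatorial step is recognising that the double sum defining $T$ in \rmref{loesungsschluessel} is precisely the unique antiderivative, under $[b,\cdot]$, of the ``defect'' that remains after separating types (a) and (b).

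For \rmref{etcircenses}, the same pattern is followed, but with $S_\psi$ in place of $i_\psi$ and $B$ in place of $b$. Now each application of $S_\psi$ carries an outer sum over $j$ together with a power of $t$ and a left-multiplication by $e\bullet_0$; similarly $B$ is the telescoping $\sum (-1)^{in}e\bullet_0 t^i$ of \rmref{extra2}. Commuting $e\bullet_0$ past $\gvf\bullet_i$ via \rmref{SchlesischeStr} and then using \rmref{lagrandebellezza1} to move $t$ through $\bullet_i$ produces again three classes of terms: the ``internal'' cancellations between $S_\psi L_\gvf$ and $L_\gvf S_\psi$, the piece identifying $S_{\{\psi,\gvf\}}$, and a remainder that one recognises as $[B,T(\gvf,\psi)]$. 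Here the restriction to the \emph{normalised} complex $\overline{\cN}$ is essential: the boundary corrections $T(\delta\gvf,\psi)$ and $T(\gvf,\delta\psi)$ present in \rmref{panem2} vanish identically on $\overline{\cN}$ because they factor through insertions of $e$ into non-extremal positions, hence through degeneracies. This is also why one must restrict to $\gvf,\psi\in\overline{\cO}$: the same normalisation on the operad side kills the remaining degenerate contributions coming from \rmref{SchlesischeStr0}.

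The main obstacle is not conceptual but combinatorial: keeping the double sums indexed correctly under the multiple applications of \rmref{SchlesischeStr}, and verifying that the signs $(-1)^{n(j-1)+(p-1)(i-1)}$ in $S$ and $(-1)^{n(j-1)+(q-1)(i-j)+p}$ in $T$ align with those produced by permuting $t^{j-1}$ past the insertions through \rmref{lagrandebellezza1}. A convenient tactic to reduce the bookkeeping is to treat the internal index $i$ and the cyclic index $j$ asymmetrically, fixing $j$ first and running an induction-in-disguise over $i$, which makes the telescoping that yields $[b,T(\gvf,\psi)]$ (resp.\ $[B,T(\gvf,\psi)]$) manifest. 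The final identification of the $\gd\gvf$ and $\gd\psi$ contributions then follows from the operad definition of the Gerstenhaber differential together with the opposite-module relation for $\bullet_i(\mu\circ_k -)$, which is in turn a direct instance of \rmref{SchlesischeStr} with the middle element being $\mu$.
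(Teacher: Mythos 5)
Your overall strategy coincides with the paper's: both identities are established by brute-force expansion of the left-hand sides via Definition \plref{volterra} and the relations \rmref{SchlesischeStr}--\rmref{colleoppio} and \rmref{danton}, followed by term-by-term cancellation against the right-hand sides. The paper's proof of \rmref{panem2} (Appendix \plref{shock&awe}) first peels off $i_{\psi\{\gvf\}}$ from $[i_\psi,L_\gvf]$ and then reduces the claim to an identity between some twenty-three labelled multi-sums; \rmref{etcircenses} is left to the reader as a similar computation. So the route is the same; the issues are with two specific claims in your plan.

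First, your explanation of why \rmref{etcircenses} carries no correction terms $T(\gd\gvf,\psi)$, $T(\gvf,\gd\psi)$ is wrong. You assert these terms ``vanish identically on $\overline{\cN}$ because they factor through insertions of $e$ into non-extremal positions, hence through degeneracies.'' But $T(\gd\gvf,\psi)$ involves insertions of the multiplication $\mu$ (the faces), not of the unit $e$ (the degeneracies), so it does not factor through degeneracies; and it manifestly does \emph{not} vanish on $\overline{\cN}$, since \rmref{panem2} also holds on the normalised complex with these terms present (they survive into the combined Gelfan'd-Daletski\u\i-Tsygan formula \rmref{nunja}). The real structural reason is that \rmref{panem2} and \rmref{etcircenses} are the $u^0$- and $u^1$-components of the single identity \rmref{nunja} with $\cT$ extended $\K[u,u^{-1}]$-linearly, so the $\cT(\gd\cdot,\cdot)$ corrections are forced entirely into the $b$-component; that the $B$-component then closes up with no correction is precisely what has to be \emph{verified}, not something that follows from normalisation. (Relatedly, the restriction to $\overline{\cO}$ and $\overline{\cN}$ is needed because $S$ and $B$ themselves only obey the calculus relations there, not because normalisation kills the $\gd$-terms.) Second, the plan stops exactly where the content of the theorem begins: the assertion that the disjoint-position terms ``cancel pairwise'' and that $T$ is ``the unique antiderivative of the defect'' is not borne out by the actual computation, in which a large residue of non-cancelling terms must be matched against $[b,T(\gvf,\psi)]-T(\gd\gvf,\psi)-(-1)^{p-1}T(\gvf,\gd\psi)$ through delicate index substitutions and sign checks. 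Since the theorem's entire substance is that the specific signs and summation ranges in \rmref{loesungsschluessel} make everything cancel, a proof must exhibit that bookkeeping; as written, the proposal is a plausible outline with one incorrect justification and the decisive verification missing.
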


The proof basically only relies on heavy use of \rmref{SchlesischeStr}--\rmref{colleoppio}, the associativity properties \rmref{danton} of the operadic composition, as well as enforced multiple sum yoga resulting in a {\em shock \& awe} computation, which is why it can be found in Appendix \ref{shock&awe}.

\begin{rem}
The operator $T$ was implicitly introduced in \cite{GelDalTsy:OAVONCDG} in the context of associative algebras. The obvious advantage of the explicit expression \rmref{loesungsschluessel} is that it not only applies to the endomorphism operad of an associative algebra acting on chains that governs Hochschild theory, but more general to any operad with multiplication, such as for example those arising in the context of Hopf algebras, differential operators, or more general bialgebroids \cite{Kow:GABVSOMOO}, and moreover also to cyclic operads as we will see in \S\ref{plong}.
\end{rem}


Finally, the following (partially) transfers the concept of Poincar\'e duality (as happens for example in the case of Calabi-Yau algebras) to the operadic context:

\begin{dfn}
Let $\cN$ be a cyclic (unital) opposite module over an operad $\cO$ with multiplication.
If there is a cocycle $\zeta \in \cN(d)$ such that the map 
$$
\cO \to \cN, \quad \gvf \mapsto i_\gvf \zeta = \gvf \smallfrown \zeta  
$$
induces an isomorphism
$$
H^n(\cO) \cong H_{d-n}(\cN),
$$
then we say that there is {\it (Poincar\'e) duality} between $\cO$ and $\cN$ with {\it fundamental class} $[\zeta]$, and that $(\cO, \cN, \zeta)$ induces a {\em duality calculus}.
\end{dfn}

In this case, one immediately has the result from \cite[Cor.~1.6]{Lam:VDBDABVSOCYA}:

\begin{prop}
If $(\cO, \cN, \zeta)$ induces a duality calculus, $H^\bullet(\cO)$ carries the structure of a BV algebra.
\end{prop}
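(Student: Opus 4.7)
The plan is to reduce the proposition to Theorem \ref{ruwenogien}. The data $(\cO, \cN, \zeta)$ of a duality calculus is precisely what is needed to assemble a homotopy Cartan-Gerstenhaber calculus together with a Palladio cocycle. First I would combine the Cartan-Rinehart homotopy \rmref{dellera-operad} with the Gel'fand-Daletski\u\i-Tsygan homotopy \rmref{panem2}--\rmref{etcircenses} supplied by Theorem \ref{nightmare} into a homotopy Cartan calculus on $CC^\bullet_{\mathrm{per}}(M)$, where $M$ is the mixed cochain complex $M^i := \overline{\cN}(-i)$ obtained from the normalised cyclic opposite module as in Example \ref{chain-to-cochain}, and where $\mathfrak{g}^\bullet := \cO$ carries the classical Gerstenhaber bracket built from operadic composition. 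The cup product built from the multiplication $\mu$ turns $\mathcal{G}_\g = \cO$ into a dga, and identity \rmref{missing-equation-see-email} is precisely the statement that $\iota$ is a morphism of dga's; this promotes the calculus to a homotopy Cartan-Gerstenhaber calculus whose underlying Gerstenhaber algebra $H^\bullet(\mathcal{G}_\g) = H^\bullet(\cO)$ is the classical Gerstenhaber algebra of the operad with multiplication.

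Second, I would verify that $\zeta$ qualifies as a Palladio cocycle in the sense of Definition \ref{definition-of-p}. Condition (i) is the hypothesis $b\zeta = 0$; condition (iii), the quasi-isomorphism property of $\iota_{(\cdot)}\zeta$, is exactly the assumed Poincar\'e duality; and condition (ii), that $B\zeta$ be $b$-exact, holds because under the duality isomorphism the class $[B\zeta] \in H_{d+1}(\cN)$ corresponds to a class in $H^{-1}(\cO)$, which vanishes whenever $\cO$ is concentrated in non-negative arities, i.e., in the standard situation.

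With both ingredients in place, Theorem \ref{ruwenogien} directly furnishes a Batalin-Vilkoviski\u\i\ algebra structure $(H^{\bullet-1}(M), \{\cdot,\cdot\}, \smallsmile, B)$. The Poincar\'e duality isomorphism $p_\zeta \colon H^\bullet(\cO) \xrightarrow{\sim} H^{\bullet-d}(M)$ intertwines, by the very definitions \rmref{cup-on-M}--\rmref{bracket-on-M}, the Gerstenhaber operations on source and target; pulling the BV differential $B$ back along $p_\zeta$ therefore equips $H^\bullet(\cO)$ with a BV algebra structure whose $\Delta$ is the transport of Connes' $B$ through Poincar\'e duality.

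The main obstacle I anticipate is purely one of degree bookkeeping: the Palladio formalism of Section \ref{sete} is set up for a duality cocycle sitting in $M^{-1}$, whereas $\zeta$ naturally lives in $M^{-d}$. Accommodating this requires either a global degree shift of $\cO$ or, equivalently, an entirely parallel rerun of the proof of Theorem \ref{ruwenogien} for Palladio-type cocycles in arbitrary negative degree. No substantive new argument is needed beyond tracking signs and gradings.
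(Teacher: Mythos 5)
Your argument is correct, and it is in substance the derivation the paper's own framework is built to support: the Proposition in the text is actually disposed of by a citation to \cite[Cor.~1.6]{Lam:VDBDABVSOCYA} rather than by an internal proof, but the machinery you invoke --- the homotopy Cartan--Gerstenhaber calculus assembled from \rmref{dellera-operad}, \rmref{missing-equation-see-email} and \rmref{panem2}--\rmref{etcircenses}, the identification of $\zeta$ as a Palladio cocycle, and the reduction to Theorem \ref{ruwenogien} --- is exactly what the text surrounding the Proposition in \S\ref{pasticciottoleccese} sets up, so your route yields a self-contained proof where the paper outsources one. The one point you leave open, namely that under your grading $M^i:=\overline{\cN}(-i)$ the cocycle $\zeta$ sits in $M^{-d}$ rather than in $M^{-1}$ as Definition \ref{definition-of-p} requires, is resolved in the paper by the first of the two options you name: the gradings are chosen as $\mathfrak{g}^k:=\overline{\cO}(k+1)$ and $M^k:=\overline{\cN}(d-k-1)$, which places $\zeta\in\cN(d)$ in degree $M^{-1}$ and makes $\g^{-2}=\overline{\cO}(-1)=0$, so that condition \emph{(ii)} of the Palladio definition holds automatically by Remark \ref{bocconotti} (this is Remark \ref{landweber}); your vanishing argument for $[\Bc\zeta]$ via $H^{-1}(\cO)=0$ is the same observation expressed in your own grading. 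With that adjustment everything you wrote goes through: Theorem \ref{ruwenogien} applies because $H^\bullet(\cO)$ with cup product and Gerstenhaber bracket is a Gerstenhaber algebra, and transporting $\Bc$ along the Gerstenhaber-algebra isomorphism $p_\zeta$ equips $H^\bullet(\cO)$ with the claimed BV structure.
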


We can now apply Theorem \ref{fapurefreddoquadentro} to get a degree $(1-d)$ Lie bracket on the negative cyclic homology of a cyclic opposite module.
Namely, let $\mathfrak{g}^\bullet$ and $M^\bullet$ be defined by $\mathfrak{g}^k :=\overline{\cO}({k+1})$ and $M^k :=\overline{\cN}({d-k-1})$, and set
\[
\bc:=b\colon M^k=\overline{\cN}({d-k-1})\to \overline{\cN}({d-k-2})=M^{k+1}
\]
along with
\[
\Bc:=B\colon M^k=\overline{\cN}({d-k-1})\to \overline{\cN}({d-k})=M^{k-1}.
\]
Then $\mathfrak{g}^\bullet$ is a differential graded Lie algebra with the Gerstenhaber bracket and differential (as in Appendix \ref{pamukkale1}), and $(M^\bullet, \bc, \Bc)$ constitutes a mixed (cochain) complex.
Also set
\begin{small}
\begin{align*}
\iota :=i\colon & \mathfrak{g}^h\otimes M^k=  \overline{\cO}({h+1})\otimes  \overline{\cN}({d-k-1}) \to \overline{\cN}({d-k-h-2})=M^{h+k+1},
\\
 \cL :=L \colon &\mathfrak{g}^h \otimes M^k=\overline{\cO}({h+1})\otimes  \overline{\cN}({d-k-1}) \to \overline{\cN}_{d-k-h-1})=M^{h+k},
\\
\cS :=S \colon &\mathfrak{g}^h \otimes M^k=\overline{\cO}({h+1})\otimes  \overline{\cN}({d-k-1}) \to \overline{\cN}({d-k-h})=M({h+k-1}),
\\
\cT :=T\colon &\mathfrak{g}^h \otimes \mathfrak{g}^j \otimes M^k =\overline{\cO}({h+1})\otimes \overline{\cO}({j+1})\otimes  \overline{\cN}({d-k-1})
\to \overline{\cN}({d-k-h-j-1})=M^{k+h+j}.
\end{align*}
\end{small}
Then the Equations \rmref{dellera-operad}, \rmref{missing-equation-see-email}, and \rmref{panem2}--\rmref{etcircenses} precisely say that $(\mathfrak{g}^\bullet,\iota,\cL,\cS,\cT)$ is a homotopy Cartan-Gerstenhaber calculus on $CC^\bullet_{\mathrm{per}}(M)$. 

\begin{rem}
\label{landweber}
Operadic duality calculi can always be induced by Palladio cocycles:
indeed, a cocycle $\zeta$ in $\cN(d)$ inducing a duality calculus is an element $\zeta$ in $M^{-1}$ such that $\bc\zeta=0$ and such that 
\[
\iota_{(\cdot)}\zeta\colon H^k(\mathfrak{g})\to H^k(M)
\]
is an isomorphism. 
%
%
As $\g^{-2}=\overline{\cO}({-1}) = 0$, we have $H^{-2}(\g)=0$, and so a fundamental class $[\zeta]$ is always defined by a Palladio cocycle (see Remark \ref{bocconotti}).
\end{rem}

Recalling Definition \ref{negative-cyclic-homology}, the isomorphisms $H^\bullet(\cO) \cong H^\bullet(\overline{\cO})$ and $H_\bullet(\cN) \cong H_\bullet(\overline{\cN})$, and the isomorphism $HC^-_\bullet(\overline{\cN}) \cong HC^-_\bullet(\cN)$ (which follows from Prop.~1.1.15  in \cite{Lod:CH} combined with {\em ibid.}, Prop.~5.1.6), Theorem \ref{fapurefreddoquadentro} then becomes:

\begin{theorem}
\label{omonicesoir}
Let $(\cO, \mu, e)$ be an operad with multiplication and $\cN$ a cyclic unital opposite $\cO$-module. Assume we have Poincar\'e duality between $\cO$ and $\cN$ induced by a fundamental class $[\zeta]\in H_d(\cN)$. 
Then $HC^{-}_\bullet(\cN)$ carries a degree $(1-d)$ Lie bracket 
\[
[\cdot,\cdot]\colon HC^-_p(\cN)\otimes HC^-_q(\cN)\to HC^-_{p+q+1-d}(\cN)
\]
defined by
\[
[{z},{w}]=(-1)^{z+d}\beta((\pi {z})\smallsmile (\pi {w})),
\]
where $\pi: HC^-_p(\cN) \to H_p(\cN)$ and $\beta\colon H_p(\cN)\to HC^{-}_{p+1}(\cN)$ are the morphisms in the long exact sequence \rmref{exact-sequence-negative-cyclic}, and where
\[
\smallsmile\colon  H_p(\cN)\otimes H_q(\cN)\to H_{p+q-d}(\cN)
\]
is induced by $\smallsmile\colon H^{d-p}(\cO)\otimes HC^{d-q}(\cO)\to H^{d-(p+q-d)}(\cO)$ via the Poincar\'e duality $H^{d-p}(\cO)\cong H_p(\cN)$.
\end{theorem}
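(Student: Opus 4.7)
The plan is to deduce the statement as a direct corollary of Theorem \ref{fapurefreddoquadentro}, applied to the homotopy Cartan-Gerstenhaber calculus constructed in the paragraph preceding the theorem. Explicitly, with $\mathfrak{g}^k := \overline{\cO}(k+1)$ and $M^k := \overline{\cN}(d-k-1)$, together with $\iota := i$, $\cL := L$, $\cS := S$, and $\cT := T$, the data $(\mathfrak{g}^\bullet, \iota, \cL, \cS, \cT)$ forms a homotopy Cartan-Gerstenhaber calculus on $CC^\bullet_{\mathrm{per}}(M)$ by Equations \rmref{dellera-operad}, \rmref{missing-equation-see-email}, \rmref{panem2}, and \rmref{etcircenses}.

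Next, I shall verify that $\zeta$, viewed as an element of $M^{-1} = \overline{\cN}(d)$, is a Palladio cocycle in the sense of Definition \ref{definition-of-p}. By hypothesis $\bc\zeta = b\zeta = 0$, and $\iota_{(\cdot)}\zeta\colon\mathfrak{g}^\bullet\to M^\bullet$ induces the Poincar\'e duality isomorphism, hence is a quasi-isomorphism. Condition (ii) of Definition \ref{definition-of-p} is then automatic as in Remark \ref{landweber}: since $\mathfrak{g}^{-2} = \overline{\cO}(-1) = 0$, we have $H^{-2}(\mathfrak{g}) = 0$, so $H^{-2}(M) = 0$, making $\Bc\zeta \in M^{-2}$ a $\bc$-boundary for free.

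Theorem \ref{fapurefreddoquadentro} then endows $HC^\bullet(M)$ with a Lie bracket $[z',w'] = (-1)^{z'-1}\beta((\pi z')\smallsmile(\pi w'))$. To rephrase this in terms of $\cN$, I combine Definition \ref{negative-cyclic-homology} with the reindexing $M^i = \overline{\cN}(d-i-1)$ and the normalisation equivalence $HC^-_\bullet(\overline{\cN}) \cong HC^-_\bullet(\cN)$, obtaining an isomorphism $HC^p(M) \cong HC^-_{d-1-p}(\cN)$. Under this identification, $\pi$ and $\beta$ correspond to the operations in \rmref{exact-sequence-negative-cyclic}, while the cup product on $H^\bullet(M)$ (transferred from $H^\bullet(\mathfrak{g})$ via $p_\zeta$) corresponds, through the Poincar\'e duality $H^\bullet(\cO) \cong H_{d-\bullet}(\cN)$, to the claimed product $\smallsmile\colon H_p(\cN)\otimes H_q(\cN)\to H_{p+q-d}(\cN)$.

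The only remaining task is a sign and degree check, which is the main bookkeeping step but presents no genuine difficulty since all substantive content sits in Theorem \ref{fapurefreddoquadentro} and in the operadic calculus constructed earlier in the present section. For $z \in HC^-_p(\cN)$ corresponding to $z' \in HC^{d-1-p}(M)$ one computes $(-1)^{z'-1} = (-1)^{d-p-2} = (-1)^{d+p} = (-1)^{z+d}$, reproducing the sign in the statement; for the degree, $\pi$ has $\cN$-degree $0$, $\smallsmile$ has $\cN$-degree $-d$, and $\beta$ has $\cN$-degree $+1$, so the bracket $[z,w]=(-1)^{z+d}\beta((\pi z)\smallsmile(\pi w))$ has total $\cN$-degree $1-d$, as asserted.
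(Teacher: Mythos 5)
Your proposal is correct and follows essentially the same route as the paper: the paper's justification for this theorem is precisely the discussion preceding it, namely setting up the calculus on $\mathfrak{g}^k=\overline{\cO}(k+1)$ and $M^k=\overline{\cN}(d-k-1)$ via Equations \rmref{dellera-operad}, \rmref{missing-equation-see-email}, and \rmref{panem2}--\rmref{etcircenses}, observing via Remarks \ref{bocconotti} and \ref{landweber} that the fundamental class is represented by a Palladio cocycle since $H^{-2}(\g)=0$, and then translating Theorem \ref{fapurefreddoquadentro} through Definition \ref{negative-cyclic-homology}, Remark \ref{mfgoverload}, and the normalisation isomorphisms. Your sign and degree bookkeeping ($(-1)^{z'-1}=(-1)^{z+d}$ and total $\cN$-degree $1-d$) is consistent with the paper's conventions.
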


\begin{example}[Higher brackets on negative cyclic homology of $d$-Calabi-Yau algebras]
\label{calabibabi1}
In this paragraph, we indicate how the original construction in \cite{dTdVVdB:CYDANCH} is recovered. The definition of a $d$-Calabi-Yau algebra in \cite{Gin:CYA} was reformulated in \cite[p.~8]{dTdVVdB:CYDANCH} by stating that a $d$-Calabi-Yau algebra consisted of a couple $(A, \eta)$, where $A$ is a homologically smooth $\K$-algebra and $\eta \in H_d(A,A)$ is a {\em nondegenerate} element in the $d$-th Hochschild homology group (see {\em loc.~cit.} for all necessary definitions used here). In particular, Poincar\'e duality holds, {\em i.e.}, the map 
$$
H^i(A,A) \to H_{d-i}(A,A), \quad \gvf \mapsto \iota_\gvf \eta
$$
between Hochschild cohomology and homology groups is an isomorphism (see {\em op.~cit.}, Prop.~5.5). 
Given these data, and considering the endomorphism operad $\cO = \End_\ahha$ along with the cyclic opposite module $\cN_\ahha = A^{\otimes \bullet+1}$ as described in Exs.~\ref{schnee1} and \ref{schnee2}, our construction in Theorem \ref{fapurefreddoquadentro} combined with Remark \ref{mfgoverload} then reproduces the bracket of degree $(1-d)$ on negative cyclic homology $HC^-_\bullet(A,A)$ for the $d$-Calabi-Yau algebra obtained in {\em op.~cit.}, Thm.~10.2. In particular, the operators $\iota,\cL,\cS,\cT$ occurring here are those made explicit in Example \ref{algebras} above.
\end{example}

\begin{example}[Higher brackets on negative cyclic homology of twisted $d$-Calabi-Yau algebras]
\label{calabibabi2}
 One can easily generalise the results in \cite{dTdVVdB:CYDANCH} mentioned in 
 Example \ref{calabibabi1} to {\em twisted} Calabi-Yau algebras, that is, 
 those where the $A$-bimodule structure on $A$ itself is twisted by a {\em Nakayama} (or {\em modular}) automorphism $\gs$, that is, the algebra $A$ as an $\Ae$-module has a finitely generated projective resolution of finite length and there exists a $d \in \N$ such that one has a right $\Ae$-module isomorphism $\Ext^i_\Ae(A,\Ae) = A_\gs$ for $i = d$ and zero otherwise.
 To be able to apply our formalism to these objects, as in \cite[\S7.4]{KowKra:BVSOEAT} the Nakayama automorphism has to be semisimple (diagonalisable). Classical examples of twisted Calabi-Yau algebras are standard quantum groups \cite{BroZha:DCATHCFNHA}, Manin's quantum plane \cite{VdB:ARBHHACFGR}, and the Podle\'s quantum $2$-sphere \cite{Kra:OTHCOQHS}.
\end{example}

\begin{example}[Higher brackets on negative cyclic homology of bialgebroids]
The preceding Examples \ref{calabibabi1} \& \ref{calabibabi2} are actually particular cases of the 
notion of Poincar\'e duality for bialgebroids, that is, a pair $(U,A)$, where $A$ is a (usually noncommutative) associative algebra and $U$ is both an algebra over $\Ae = A \otimes \Aop$ and a coalgebra over $A$ subject to certain compatibility conditions. 
The necessary product structures that allow for Poincar\`e duality for this case were formulated in \cite[Thm.~1]{KowKra:DAPIACT}: in particular, as in the Calabi-Yau case, if $A$ as a $U$-module admits a finitely generated projective resolution of finite length and there exists a $d \geq 0$ such that $\Ext^k_U(A, U) = 0$ for all $k \neq d$ and $A \cong \Ext_U^d(A,U)$, then there exists a Palladio cocycle $\zeta$ with $[\zeta] \in \Tor^U_d(A,A)$ such that $\iota_{(\cdot)} \zeta = (\cdot) \smallfrown \zeta$ establishes an isomorphism $\Ext^{n}_U(M,U) \to \Tor^U_{n-d}(M,U)$ for a left $U$-module $M$ with $\Tor_k^A(M,A) = 0$ in positive degrees; 
see {\em op.~cit.} for all technical details. Not only the preceding Examples \ref{calabibabi1} \& \ref{calabibabi2} by setting $(U,A) := (\Ae, A)$ are included in this theory, but also the {\em duality Lie-Rinehart algebras} in \cite{Hue:DFLRAATMC} (geometrically, Lie algebroids that lead to the space of differential operators on a smooth manifold); in particular, this also comprises Poisson algebras. As described in detail in \cite[\S6.3]{Kow:GABVSOMOO}, 
the operad needed here is not precisely an endomorphism operad, but rather
$$
\cO := \Hom_\Aee(U^{\otimes_\Aopp \bullet}, P),
$$
where $P$ is a braided commutative Yetter-Drinfel'd algebra; in the simplest case, one can consider $P:=A$. If the bialgebroid is a (left) Hopf algebroid, 
one has sort of an antipode (in the simplest case: an ordinary Hopf algebra), which induces a cyclic structure on the opposite $\cO$-module
$$
\cN := (Q \otimes_\Aopp P) \otimes_\Aopp U^{\otimes_\Aopp \bullet},
$$
which therefore becomes a cyclic opposite $\cO$-module; here, $P$ is as above, whereas $Q$ is an {\em anti} Yetter-Drinfel'd module (see {\em op.~cit.} and references therein for all technical details). By the constructions described in \S\ref{sansonitedescoitalianoitalianotedesco}, the $\K$-module $\cN$ then carries the structure of a mixed (chain) complex, which again by Remark \ref{mfgoverload} allows to consider its negative cyclic homology $HC^-_\bull(\cN)$, and analogous to the preceding Example \ref{calabibabi1}, in case of Poincar\'e duality, one obtains a bracket of degree $(1-d)$ on $HC^-_\bull(\cN)$. 
\end{example}

\section{Brackets for modules over cyclic operads}
\label{plong}

\subsection{Modules over operads}

In this section, we will see that the notion of a module over a cyclic operad with multiplication provides a quite general class of examples that illustrates the constructions in \S\ref{sete}. To this end, we show how a  module $\cM$ over a cyclic operad with multiplication can be equipped with a calculus structure first. 
To fix the notation, recall the following definition from \cite[Def.~1.3]{Mar:MFO}:

\begin{dfn}
A {\em module} $\cM$ over an operad $\cO$ is a collection 
$\{\cM(q)\}_{q \geq 0}$ of $\K$-modules together with two linear composition maps, the {\em left composition}
$$
\circ_i = \circ^\ell_i: \cO(p) \otimes \cM(q) \to \cM(p+q-1), \quad i = 1, \ldots, p,
$$
as well as the {\em right composition}
$$
\circ_i = \circ^r_i: \cM(q) \otimes \cO(p) \to \cM(q+p-1), \quad i = 1, \ldots, q,
$$
subject to associativity properties analogous to those in the definition of an operad in \S\ref{pamukkale1}, that is, Eqs.~\rmref{danton} are required to hold, {\em mutatis mutandis}, for triples of elements in $\cO(p) \otimes \cO(q) \otimes \cM(r)$, $\cO(p) \otimes \cM(q) \otimes \cO(r)$, or $\cM(p) \otimes \cO(q) \otimes \cO(r)$.
\end{dfn}

As there is sort of an action from both sides, one might want to call such an object a {\em bimodule}; but we won't.
It is immediately clear that an operad is a module over itself with both 
left and right composition given by the partial compositions $\circ_i$. For better readability, in the sequel we will not distinguish in notation between 
$\circ^\ell_i$ and $ \circ^r_i$,
even for general $\cM$.

The necessary observation to be made here is that a module over an operad is obviously not the same as an opposite module and hence, even in case a cyclic operator $t$ is given, one cannot directly use the formulae given in \S\ref{pasticciottoleccese}, in particular Definition \ref{volterra}, to obtain the structure of a calculus.

More precisely, whereas a module over an operad $\cO$ is not an opposite $\cO$-module, its $\K$-linear dual is so: recall that we assume all $\cM(q)$ to be in $\kmod$, so it makes sense to define $\cM^*$ as a sequence 
$
\{\cM^*(q)\}_{q \geq 0},
$
where for each degree one has
$$
\cM^*(q) := \Hom_\K(\cM(q), \K),
$$
and we declare the natural pairing 
$$
\langle \cdot, \cdot \rangle: \cM^* \otimes \cM \to \K, \quad x \otimes m \mapsto \langle x, m \rangle
$$
to be zero whenever the degrees of elements in $\cM^*$ and $\cM$ do not match.

\begin{lem}
\label{forzafrosinone1}
If $\cM$ is an $\cO$-module, its linear dual $\cM^*$ is an opposite $\cO$-module in the sense of Definition \ref{molck} by setting
\begin{equation}
\label{mustazzoli1}
\langle \gvf \bullet_i x, m \rangle := \langle x, m \circ_i \gvf \rangle, \qquad \mbox{for} \ i = 1, \ldots, q,
\end{equation}
where $\gvf \in \cO(p)$, $m \in \cM(q)$, and $x \in \cM^*(p+q-1)$.
\end{lem}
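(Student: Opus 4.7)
The plan is to check the defining identities \rmref{TchlesischeStr} (and their degenerate version \rmref{TchlesischeStr0} when $p = 0$) of an opposite $\cO$-module for the operations $\bullet_i$ prescribed by \rmref{mustazzoli1}. Since two elements of $\cM^*$ coincide precisely when they agree as linear functionals on $\cM$, it suffices to pair both sides of each identity with an arbitrary $m \in \cM$ of matching arity, and to compare the resulting values in $\K$.

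For the main computation, fix $\gvf \in \cO(p)$, $\psi \in \cO(q)$, $x \in \cM^*(n)$, and $m \in \cM(n-p-q+2)$. A double application of \rmref{mustazzoli1} yields
\begin{equation*}
\langle \gvf \bullet_i (\psi \bullet_j x),\, m \rangle \;=\; \langle \psi \bullet_j x,\, m \circ_i \gvf \rangle \;=\; \langle x,\, (m \circ_i \gvf) \circ_j \psi \rangle.
\end{equation*}
The key point is that the axioms of right composition for the $\cO$-module $\cM$ (the analogues of \rmref{danton} for triples in $\cM \otimes \cO \otimes \cO$) rewrite $(m \circ_i \gvf) \circ_j \psi$ in three distinct ways, depending on the relative position of $j$ with respect to the block $\{i, \ldots, i+p-1\}$ of positions of $m \circ_i \gvf$ occupied by the inserted $\gvf$: one obtains $(m \circ_j \psi) \circ_{i+q-1} \gvf$ when $j < i$; $m \circ_i (\gvf \circ_{j-i+1} \psi)$ when $j - p < i \leq j$; and $(m \circ_{j-p+1} \psi) \circ_i \gvf$ when $1 \leq i \leq j - p$. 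Re-applying \rmref{mustazzoli1} to each of these three expressions produces precisely the three right-hand-side terms of \rmref{TchlesischeStr} paired with $m$, which concludes the case $p > 0$.

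The nullary case $p = 0$ is formally identical, the only difference being that the middle alternative is vacuous (it requires $p \geq 1$), so that only the two remaining cases appear, reproducing \rmref{TchlesischeStr0}. No conceptual difficulty arises; the only mild technicality is the careful bookkeeping of how positions of $m$ are relabelled after the insertion of $\gvf$, which is precisely what the three cases of the operadic module associativity encode. For this reason I do not anticipate any essential obstacle in carrying out the verification.
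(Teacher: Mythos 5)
Your proposal is correct and follows essentially the same route as the paper: pair with an arbitrary $m\in\cM$, apply the defining relation \rmref{mustazzoli1} twice to reduce to $\langle x,(m\circ_i\gvf)\circ_j\psi\rangle$, and invoke the three cases of the module analogue of \rmref{danton}. The paper only spells out the middle case explicitly, so your treatment of all three cases and of the nullary case $p=0$ is, if anything, slightly more complete.
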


\begin{proof}
That the required associative properties \rmref{TchlesischeStr} are fulfilled is a straightforward computation: for example, assume that $\gvf \in \cO(p)$ and $i \leq j < i +p$. Then applying the middle equation in \rmref{danton} to the defining equation \rmref{mustazzoli1}, one obtains 
$$
\langle \gvf \bullet_i (\psi \bullet_j x), m \rangle  = 
\langle x, (m \circ_i \gvf) \circ_j \psi  \rangle
=
\langle x, m \circ_i (\gvf \circ_{j-i+1} \psi) \rangle
=
\langle (\gvf \circ_{j-i+1} \psi) \bullet_{i}  x, m \rangle
$$
for $\psi \in \cO$, $x \in \cM^*(n)$, and $m \in \cM(n-p-q+2)$, which is the desired middle equation in  \rmref{TchlesischeStr}.
\end{proof}

\subsection{The calculus operations for modules over cyclic operads}

The definition of a {\em cyclic operad} $(\cO, \tau)$ with cyclic operator $\tau$ goes back to \cite[\S2]{GetKap:COACH}, whereas a {\em cyclic operad with multiplication} $(\cO, \mu, e, \tau)$ was introduced in \cite[Def.~3.11]{Men:BVAACCOHA}.
In such a situation, the following definition makes sense: 

\begin{dfn}
\label{benagol}
A {\em cyclic module over a cyclic operad}  $(\cO, \tau)$ is an $\cO$-module $\cM$ together with a degree-preserving linear map $\tau: \cM(q) \to \cM(q)$ for $q \in \N$ such that 
\begin{eqnarray*}
\tau(\gvf \circ_i m) &=& \tau\gvf \circ_{i-1} m, \qquad \mbox{for} \ p \geq 2, q \geq 0, 2 \geq i \geq p, \\
\tau(\gvf \circ_1 m) &=& \tau m \circ_q \tau\gvf, \qquad \mbox{for} \ p \geq 1, q \geq 1, \\ 
\tau^{q+1} &=& \id_{\cM(q)},
\end{eqnarray*}
for each $\gvf \in O(p)$ and $m \in \cM(q)$.
\end{dfn}

As is Lemma \ref{forzafrosinone1}, also the following assertion is easy to verify.

\begin{lem}
\label{forzafrosinone2}
Let $(\cM, \tau)$ be a cyclic module over the cyclic operad 
$(\cO, \tau)$. Defining then
\begin{equation}
\label{mustazzoli2}
\langle tx, n \rangle := \langle x , \tau m \rangle,
\end{equation}
where $m \in \cM$ and $x \in \cM^*$, 
along with
\begin{equation}
\label{mustazzoli3}
\langle \gvf \bullet_0 x, m \rangle := \langle x, \tau\gvf \circ_p m \rangle,
\end{equation}
where $\gvf \in \cO(p)$, $m \in \cM(q)$, and $x \in \cM^*(p+q-1)$,
turns $\cM^*$ together with the compositions \rmref{mustazzoli1} into a cyclic (unital) opposite $\cO$-module. 
If $(\cO, \mu, e, \tau)$ is a cyclic operad with multiplication, then $\cM^*$ by virtue of Eqs.~\rmref{colleoppio} is a cyclic $\K$-module.
\end{lem}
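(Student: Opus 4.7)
The plan is to extend the opposite module structure on $\cM^*$ established in Lemma \ref{forzafrosinone1} to a cyclic one by verifying, for the additional data $t$ and $\bullet_0$ defined via \eqref{mustazzoli2}--\eqref{mustazzoli3}, each axiom of Definition \ref{piantadiroma} that was not already covered by Lemma \ref{forzafrosinone1}: namely, the associativities \eqref{SchlesischeStr}--\eqref{SchlesischeStr0} in the cases that involve index $0$, the unitality \eqref{SchlesischeStr-1} for $i=0$, the cyclic compatibility \eqref{lagrandebellezza1}, and the finite order \eqref{lagrandebellezza2}. The strategy in every case is the same: pair both sides of the identity to be checked with an arbitrary $m \in \cM$, translate the left- and right-hand sides via \eqref{mustazzoli1}, \eqref{mustazzoli2}, and \eqref{mustazzoli3} into expressions in $\cM$ involving the left/right compositions $\circ_i$ and the cyclic operator $\tau$, and then invoke the cyclic operad axioms for $(\cO, \tau)$ together with the cyclic module axioms of Definition \ref{benagol}.

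I would begin with the short checks. The finite-order identity \eqref{lagrandebellezza2}, i.e., $t^{n+1} = \mathrm{id}$ on $\cM^*(n)$, is immediate by iterating \eqref{mustazzoli2} and using $\tau^{n+1} = \mathrm{id}$ on $\cM(n)$. The extended unitality $\mathbb{1} \bullet_0 x = x$ reduces via \eqref{mustazzoli3} to $\langle x,\tau\mathbb{1}\circ_1 m\rangle = \langle x,m\rangle$, which holds because $\tau$ fixes the unit $\mathbb{1} \in \cO(1)$ (a standard property of cyclic operads, as in \cite{GetKap:COACH}) and $\mathbb{1}\circ_1 m = m$. The cyclic compatibility $t(\gvf\bullet_i x) = \gvf\bullet_{i+1}t(x)$ for $i\geq 1$ follows by unwinding via \eqref{mustazzoli1}--\eqref{mustazzoli2} and using the left-composition cyclic axiom $\tau(\gvf\circ_j m) = \tau\gvf\circ_{j-1}m$ of Definition \ref{benagol}, while for $i=0$ both sides reduce to the identity $\tau\gvf\circ_p \tau m = \tau(m\circ_1\gvf)$, which is the right-composition avatar of the $i=1$ axiom in Definition \ref{benagol} and is guaranteed by the cyclic module structure.

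The bulk of the work is in verifying the extended associativities \eqref{SchlesischeStr}--\eqref{SchlesischeStr0} whenever at least one of the indices $i$ or $j$ equals $0$. Here one must split into sub-cases according to the relative positions of $i$, $j$ and the arities $p$, $q$. In each sub-case, unpacking the defining pairings produces an identity in $\cM$ that reduces to an application of the module associativity axioms for $\cM$ over $\cO$, combined with one of the cyclic axioms that expresses $\tau(\gvf\circ_i\psi)$ in terms of $\tau\gvf$, $\tau\psi$ and a reindexed composition. This step is mechanical but case-heavy, and I expect it to be the main obstacle: no new idea intervenes beyond a patient bookkeeping of indices, entirely parallel in spirit to the verification of Lemma \ref{forzafrosinone1}.

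Finally, the last clause of the lemma — that in the presence of an operad multiplication $\mu$ with unit $e$ the cyclic opposite module $\cM^*$ becomes a cyclic $\K$-module — is not proved from scratch: once the first part is established, $\cM^*$ satisfies exactly the hypotheses of \cite[Prop.~3.5]{Kow:GABVSOMOO} (already invoked in the paragraph around \eqref{colleoppio}), which constructs faces, degeneracies and a cyclic operator on any cyclic unital opposite module over an operad with multiplication and verifies Connes' simplicial/cyclic identities. Citing this result closes the proof.
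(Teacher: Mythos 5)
Your proposal is correct and matches the paper's (omitted) argument: the paper simply declares the lemma "easy to verify" in the same way as Lemma \ref{forzafrosinone1}, i.e., by pairing each axiom of Definition \ref{piantadiroma} against an arbitrary element of $\cM$, unwinding \eqref{mustazzoli1}--\eqref{mustazzoli3}, and invoking the cyclic operad/module relations, which is exactly your plan. Your handling of the final clause via \cite[Prop.~3.5]{Kow:GABVSOMOO} is also the route the paper takes.
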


\begin{rem}
In case $\gvf \in \cO(0)$ is of degree zero, Eq.~\rmref{mustazzoli3} does not make sense and has to be read appropriately: using \rmref{cine40}, one has
\begin{equation}
\label{cerrate}
\langle \gvf \bullet_0 x, m \rangle = \langle t(\gvf \bullet_q x), m \rangle
\overset{\scriptscriptstyle{\rmref{mustazzoli2}}}{=}
  \langle \gvf \bullet_q x, \tau m \rangle
\overset{\scriptscriptstyle{\rmref{mustazzoli1}}}{=}
 \langle x, \tau m \circ_m \gvf \rangle
\end{equation}
 for $m \in \cM(q)$ and $x \in \cM^*(q-1)$.
\end{rem}

One then has automatically by the considerations in \S\ref{pasticciottoleccese}:

\begin{prop}
\label{baldda}
If $\cM$ is a cyclic module over a cyclic operad with multiplication, then there is the structure of a homotopy Cartan calculus on $\cM^*$ resp.\ $CC^\bullet_{\mathrm{per}}(\cM^*)$.
\end{prop}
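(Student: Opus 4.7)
The plan is straightforward: by Lemma \ref{forzafrosinone2}, the $\K$-linear dual $\cM^*$ of any cyclic module $\cM$ over a cyclic operad with multiplication $(\cO, \mu, e, \tau)$ carries a natural structure of a cyclic unital opposite $\cO$-module, with opposite compositions $\bullet_i$ defined for $i \geq 1$ and $i = 0$ by \rmref{mustazzoli1} and \rmref{mustazzoli3} respectively, and with cyclic operator $t$ given by \rmref{mustazzoli2}. The proposition then amounts to applying to this cyclic opposite module the general construction of \S\ref{pasticciottoleccese}, which equips any cyclic unital opposite module over any operad with multiplication with a homotopy Cartan calculus. Since a cyclic operad with multiplication is a fortiori an operad with multiplication, this construction is available for the pair $(\cO, \cM^*)$.

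Concretely, I would first specialise Definition \ref{volterra} to $\cN := \cM^*$, producing the contraction $\iota$, the Lie derivative $\cL$, and the homotopy $\cS$ as $\K$-linear maps $\cO \otimes \cM^* \to \cM^*$ of the indicated degrees. Through the pairing $\langle\cdot,\cdot\rangle$, each of these operators acquires an equivalent dual description directly in terms of the left and right compositions of $\cO$ on $\cM$ together with the cyclic operator $\tau$ of $\cM$, but this reformulation, although useful in explicit computations, is logically superfluous for the present statement. Next, I would invoke the Cartan-Rinehart homotopy formulae \rmref{dellera-operad} together with the Gel'fand-Daletski\u\i-Tsygan homotopy $\cT$ defined by formula \rmref{loesungsschluessel} in Theorem \ref{nightmare} and the identities \rmref{panem2}--\rmref{etcircenses} it satisfies: all of these hold on the normalised complex $\overline{\cM^*}$ in full generality and hence in particular for the case at hand.

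Finally, extending these operators $\K[u,u^{-1}]$-linearly to the periodic bicomplex $CC^\bullet_{\mathrm{per}}(\cM^*)$ yields the cyclic cap product $\cI = \iota + u\cS$ satisfying the compact formula \rmref{schonlos2} and the combined Gel'fand-Daletski\u\i-Tsygan identity \rmref{nunja}, so that $(\overline{\cO}, \iota, \cL, \cS, \cT)$ constitutes a homotopy Cartan calculus on $CC^\bullet_{\mathrm{per}}(\cM^*)$ in the sense of Definition \ref{ministeroperibenieleattivitaculturali}. There is no genuine obstacle in the argument: it is essentially a bookkeeping corollary of Lemmata \ref{forzafrosinone1}--\ref{forzafrosinone2} together with the package assembled in \S\ref{pasticciottoleccese}. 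The only mildly delicate point to keep in mind is that the normalisation used in Remark \ref{normalised} on the opposite-module side corresponds, through the pairing, to quotienting by the image of the degeneracies of the cyclic $\K$-module $\cM^*$, so that Definition \ref{volterra} and Theorem \ref{nightmare} indeed produce well-defined operators on $\overline{\cM^*}$.
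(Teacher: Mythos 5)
Your proposal is correct and follows exactly the route the paper takes: the paper states Proposition \ref{baldda} as an automatic consequence of Lemma \ref{forzafrosinone2} (which makes $\cM^*$ a cyclic unital opposite $\cO$-module) combined with the calculus package of \S\ref{pasticciottoleccese}, namely Definition \ref{volterra}, the Cartan--Rinehart formulae \rmref{dellera-operad}, and Theorem \ref{nightmare}. Your additional remarks on the $\K[u,u^{-1}]$-linear extension and on normalisation are consistent with how the paper assembles the calculus, so there is nothing to add.
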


On the other hand, at this point we are not really interested in the calculus structure on $\cM^*$ or $CC^\bullet_{\mathrm{per}}(\cM^*)$ but rather how this induces a homotopy Cartan calculus on  $\cM$ or $CC^\bullet_{\mathrm{per}}(\cM)$ via the canonical pairing: remember that one could {\em not} obtain the calculus maps for $\cM$ or $CC^\bullet_{\mathrm{per}}(\cM)$ directly from \S\ref{pasticciottoleccese} as $\cM$ is not an opposite $\cO$-module. One rather has:

\begin{theorem}
\label{nochkeinname}
Let $(\cM, \tau)$ be a cyclic module over the cyclic operad with multiplication $(\cO, \mu, e, \tau)$. Then the degree $1$ differential
\begin{eqnarray*}
\bc: \cM(q) &\to& \cM(q+1), 
\\
m &\mapsto&  \mu \circ_2 m + \sum^q_{i=1} (-1)^{i} m \circ_i \mu + (-1)^{q+1} \mu \circ_1 m,
\end{eqnarray*}
the degree $-1$ differential
\begin{eqnarray*}
\Bc: \cM(q) &\to& \cM(q-1),
\\
m &\mapsto& \sum^{q-1}_{i=0} (-1)^{(q-1)i} \tau^i(\tau m \circ_q e),
\end{eqnarray*}
along with the contraction
\begin{eqnarray}
\label{biennale}
\iota: \cO(p) \otimes \cM(q) &\to& \cM(p+q),
\nonumber
\\
\gvf \otimes m &\mapsto& 
(\mu \circ_2 m) \circ_1 \gvf, \phantom{ \sum^q_{i=1}}
\end{eqnarray}
the Lie derivative
\begin{eqnarray}
\cL: \cO(p) \otimes \cM(q) &\to& \cM(p+q-1),
\nonumber
\\
\gvf \otimes m &\mapsto& \sum^q_{i=1} (-1)^{(p-1)(i-1)} m \circ_i \gvf 
+ \sum^p_{i=1} (-1)^{(p-1)i + q(i-1)} \tau^i \gvf \circ_{p-i+1} m,
\nonumber
\end{eqnarray}
the degree $-2$ operator
\begin{eqnarray}
\cS: \cO(p) \otimes \cM(q) &\to& \cM(p+q-2),
\nonumber
\\
\gvf \otimes m &\mapsto&
\sum^{q-1}_{i=1} \sum^{i}_{j=1} (-1)^{p(i-j) + q(j-1) + i-1} (\tau^j m \circ_{q-j+1} e) \circ_{i-j+1} \gvf,
\nonumber
\end{eqnarray}
and the homotopy
\begin{eqnarray}
\cT: \cO(p) \otimes \cO(r) \otimes \cM(q) &\to& \cM(p+r+q-2),
\nonumber
\\
\gvf \otimes \psi \otimes m &\mapsto& 
\sum^{p-1}_{j=1} \sum^{p-1}_{i=j} (-1)^{q(j-1) + r(i-1) + (p-1)j +i } (\tau^j \gvf \circ_{p-j+1} m) \circ_{p-i} \psi
\nonumber
\end{eqnarray}
satisfy $[\Bc,\bc]=0$ as well as Equations \rmref{dellera1}, \rmref{morphism-of-dgas}, and
\begin{equation}
\label{radiofranceinternational}
\begin{cases}[\iota_\gvf, \cL_\psi] - \iota_{\{\gvf, \psi\}} = [\bc, \cT(\gvf,\psi)] - \cT(\gd \gvf, \psi) -(-1)^{\gvf-1} \cT(\gvf, \delta \psi) \\
[\cS_\gvf, \cL_\psi] - \cS_{\{\gvf,\psi\}} = [\Bc, \cT(\gvf,\psi)] 
\end{cases}
\end{equation}
on the normalised cochain complex $\overline{\cM}$, when $\gvf,\psi$ are in $\overline{\cO}$.
\end{theorem}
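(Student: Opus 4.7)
The strategy is a duality argument with the linear dual $\cM^*$. By Lemmas \ref{forzafrosinone1} and \ref{forzafrosinone2}, $\cM^*$ is a cyclic unital opposite $\cO$-module, and hence by the constructions of \S\ref{pasticciottoleccese} (Proposition \ref{baldda}) it already carries the homotopy Cartan-Gerstenhaber calculus $(b, B, i, L, S, T)$ satisfying the Cartan-Rinehart formulae \rmref{dellera-operad}, the product identity \rmref{missing-equation-see-email}, and the Gel'fand-Daletski\u\i-Tsygan homotopy formulae \rmref{panem2}--\rmref{etcircenses}. The plan is to transfer this structure to $\cM$ via the natural pairing $\langle \cdot, \cdot \rangle\colon \cM^* \otimes \cM \to \K$.

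The main computational step is to check that the operators $(\bc, \Bc, \iota, \cL, \cS, \cT)$ of the theorem are precisely the transposes of $(b, B, i, L, S, T)$ with respect to $\langle \cdot, \cdot \rangle$, i.e.\ $\langle f(x), m \rangle = \langle x, \hat{f}(m) \rangle$ for each pair $(f,\hat f)$. Each such identity is a direct computation using the defining relations \rmref{mustazzoli1}--\rmref{mustazzoli3} of the bullet operations on $\cM^*$, operad associativity, the cyclic invariance $\tau\mu = \mu$ of the multiplication in a cyclic operad with multiplication, and iterated use of the cyclic module axiom $\tau(\gvf \circ_i m) = \tau\gvf \circ_{i-1} m$ (for $i \geq 2$) to convert powers of $t$ acting on $\cM^*$ into powers of $\tau$ applied to module compositions on $\cM$. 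For instance, iteration yields $\tau^{i-1}(\tau\gvf \circ_p m) = \tau^i\gvf \circ_{p-i+1} m$ for $1 \le i \le p$, which is exactly what transforms the second sum in the formula for $L_\gvf$ into the second sum in $\cL_\gvf$; the analogous rearrangements produce $\iota$, $\cS$, and $\cT$, while the cases of $\bc$ and $\Bc$ were essentially observed already in \S\ref{sansonitedescoitalianoitalianotedesco}.

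Once adjointness is established, each identity in \rmref{dellera1}, \rmref{morphism-of-dgas}, and \rmref{radiofranceinternational}, together with $[\Bc, \bc] = 0$, will follow by transposing the corresponding identity on $\overline{\cM^*}$. The algebraic backbone is $\widehat{fg} = \hat g \hat f$, whence $\widehat{[f,g]} = -(-1)^{|f||g|}[\hat f, \hat g]$, with the adjoint of a degree-$k$ arity shift again a degree-$k$ arity shift. The hard part is the sign bookkeeping: the Koszul signs picked up by dualising the graded commutators must account for the apparent flips such as $-S_{\delta \gvf}$ becoming $+\cS_{\delta \gvf}$ (\rmref{dellera-operad} vs.\ \rmref{dellera1}), and the order-reversal in $\iota_{\gvf\smallsmile\psi} = \iota_\gvf\iota_\psi$ must be matched against the convention for $\smallsmile$ on $\cO$, keeping in mind that $\gvf \in \cO(p)$ carries Gerstenhaber degree $p-1$ in $\mathfrak{g}^\bullet$ and that $|\delta \gvf| = |\gvf|+1$. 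Should the sign reconciliation turn out too delicate for the duality route, a more laborious but entirely direct alternative is to mimic the brute-force calculation of Theorem \ref{nightmare} performed in Appendix \ref{shock&awe}, substituting the explicit formulas of the statement and reorganising the multiple sums by the operad and cyclic module axioms.
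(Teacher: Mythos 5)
Your proposal follows exactly the paper's own route: the paper likewise deduces the theorem from Proposition \ref{baldda} by checking that $(\bc,\Bc,\iota,\cL,\cS,\cT)$ are the adjoints of $(b,B,i,L,S,T)$ under the natural pairing, using \rmref{mustazzoli1}--\rmref{mustazzoli3} (it carries out the verification explicitly only for $\iota$ and $\cS$, where precisely the conversion of $t$-powers on $\cM^*$ into $\tau$-powers on $\cM$ that you describe appears). The sign bookkeeping you flag is real but is handled the same way in the paper, namely by the regrading conventions of \S\ref{pasticciottoleccese}, so no separate argument is needed.
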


\begin{proof}
This follows from Proposition \ref{baldda} together with the fact that the given operators are so-to-say the {\em adjoint} operators of those on $\cM^*$. More precisely, for an $x \in \cM^*(p+q-1)$, one has
\begin{align*}
\langle x, \Bc m \rangle = \langle Bx, m \rangle, \quad
\langle i_\gvf x, m \rangle = \langle x, \iota_\gvf m \rangle, \quad
\langle L_\gvf x, m \rangle = \langle x, \cL_\gvf m \rangle, \\
\langle S_\gvf x, m \rangle = \langle x, \cS_\gvf m \rangle, \quad
\langle T(\gvf, \psi)(x), m \rangle = \langle x, \cT(\gvf, \psi)(m) \rangle,
\end{align*}
and that this is indeed the case is computed directly using \rmref{mustazzoli1}--\rmref{mustazzoli3}. We show this only for the operators $\iota$ and $\cS$ and leave the rest to the reader. For example, one has
\begin{equation*}
\label{petrel2}
\begin{split}
\langle \iota_\gvf x, m \rangle &\overset{\scriptscriptstyle{\rmref{alles4}}}{=} \langle (\mu \circ_2 \gvf) \bullet_0 x, m \rangle
\overset{\scriptscriptstyle{\rmref{mustazzoli3}}}{=}
\langle x, \tau(\mu \circ_2 \gvf) \circ_{p+1} m \rangle 
\\
&
=
\langle x, (\mu \circ_1 \gvf) \circ_{p+1} m \rangle
=
\langle x, (\mu \circ_2 m) \circ_1 \gvf \rangle = \langle x, \iota_\gvf m \rangle,  
\end{split}
\end{equation*}
for $\gvf \in \cO(p)$, $m \in \cM$, and $x \in \cM^*$. Likewise, for $\gvf \in \cO(p)$, $m \in \cM(q)$, and $x \in \cM^*(p+q-2)$
\begin{eqnarray*}
\langle \cS_\gvf x , m \rangle 
&\overset{\scriptscriptstyle{\rmref{lagrandebellezza1}, \rmref{capillareal1}}}{=}& 
\sum^{q-1}_{i=1} \sum^{i}_{j=1} (-1)^{p(i-j) + q(j-1) + i-1} \langle e \bullet_0 t^{j-1}(\gvf \bullet_{i-j+1} x), m \rangle
\\
&\overset{\scriptscriptstyle{\rmref{cerrate}}}{=}&
\sum^{q-1}_{i=1} \sum^{i}_{j=1} (-1)^{p(i-j) + q(j-1) + i-1} \langle t^{j-1}(\gvf \bullet_{i-j+1} x), \tau m \circ_q e \rangle
\\
&\overset{\scriptscriptstyle{\rmref{mustazzoli2}}}{=}&
\sum^{q-1}_{i=1} \sum^{i}_{j=1} (-1)^{p(i-j) + q(j-1) + i-1} \langle \gvf \bullet_{i-j+1} x , \tau^{j-1}(\tau m \circ_q e) \rangle
\\
&\overset{\scriptscriptstyle{\rmref{mustazzoli1}}}{=}&
\sum^{q-1}_{i=1} \sum^{i}_{j=1} (-1)^{p(i-j) + q(j-1) + i-1} \langle x, (\tau^j m \circ_{q-j+1} e) \circ_{i-j+1} \gvf \rangle \\
&=& \langle x, \cS_\gvf m \rangle,
\end{eqnarray*}
for $q \geq 2$ and zero otherwise.
\end{proof}

\begin{rem}
\label{fieldsagain}
Dually to Remark \ref{formsagain}, this can be seen as a generalisation of the classical calculus from differential geometry as in the second part of Example \ref{forms'n'fields} of ``fields acting on fields'' themselves.
\end{rem}

\subsection{Cyclic operads as cyclic modules over themselves}
\label{dresdenerstollen}

The most obvious example of a cyclic module over a cyclic operad is of course given by the operad itself: if $(\cO, \mu, e, \tau)$ is a cyclic operad with multiplication, then 
\begin{equation}\label{contraction-by-e-is-identity}
\iota_{(\cdot)} e: \cO \to \cO, \quad \gvf \mapsto \iota_\gvf e
\end{equation}
is the identity map, as is immediately seen from \rmref{biennale} along with $\mu \circ_2 e = \mathbb{1}$ and $\mathbb{1} \circ_1 \gvf = \gvf$; more general, we have
\begin{equation}
\label{tragisch1}
\iota_\gvf \psi = \gvf \smallsmile \psi 
\end{equation}
for $\gvf, \psi \in \cO$. 
Similarly, applying the operators in Theorem \ref{nochkeinname} to the unit $e$, one obtains 
\begin{equation}
\label{tragisch2}
\begin{array}{rcl}
\cL_\gvf e &=& \Bc \gvf, \\
\bc (e) = \cS_\gvf e &=& 0, \\
\cT(\gvf, \psi)(e) &=& (-1)^\psi \cS_\psi\gvf, 
\end{array}
\end{equation}
where the last one is seen by a substitution in the summation indices.
 Finally, note that in this case
$$
\bc = (-1)^{q-1} \gd.
$$
on an element of degree $q$.

\begin{theorem}
\label{nocheins}
For a cyclic operad with multiplication $(\cO, \mu, e, \tau)$, one has
\begin{equation}
\begin{split}
\label{nunjanaja1}
\{\psi,\gvf\} &= - \psi \smallsmile \Bc(\gvf) - (-1)^{(\gvf-1)\psi} \cL_\gvf \psi 
\\
&
\quad + (-1)^{\gvf} \delta(S_\psi\gvf) +  (-1)^{\psi } S_{\psi}\gd\gvf + (-1)^{\psi+\gvf-1} S_{\gd \psi} \gvf.
\end{split}
\end{equation}
Passing to the normalised complex, this is equivalent to
\begin{equation}
\label{nunjanaja2}
\begin{split}
\{\psi,\gvf\} &= - \psi \smallsmile \Bc(\gvf) + (-1)^{(\gvf-1)\psi} \Bc(\gvf \smallsmile \psi) - (-1)^{(\gvf-1)(\psi-1)} \gvf \smallsmile \Bc(\psi)  
\\
        &\quad  - (-1)^{(\gvf-1)\psi} \delta\big(S_\gvf \psi)  + (-1)^{(\gvf-1)\psi} S_{\gd \gvf} \psi  + (-1)^{(\gvf-1)(\psi-1)} S_{\gvf} \gd \psi
\\
&\quad
+ (-1)^{\gvf} \delta(S_\psi\gvf)  +  (-1)^\psi S_{\psi}\gd\gvf + (-1)^{\psi+\gvf-1} S_{\gd \psi} \gvf.
\end{split}
\end{equation}
\end{theorem}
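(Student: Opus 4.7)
The plan is to apply the first Gelfand--Daletski\u\i--Tsygan homotopy identity from \rmref{radiofranceinternational} pointwise at the operadic multiplicative unit $e \in \cO(1)$. Since $\iota_{(\cdot)}\,e$ is the identity map on $\cO$ by \rmref{contraction-by-e-is-identity}, the summand $\iota_{\{\gvf,\psi\}}(e)$ equals the Gerstenhaber bracket itself, which can therefore be isolated on one side of the equation.

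All remaining summands collapse via the identities collected in \rmref{tragisch1} and \rmref{tragisch2}. Using $\iota_\chi\xi = \chi \smallsmile \xi$ and $\cL_\xi e = \Bc\xi$ one gets $\iota_\gvf\cL_\psi(e) = \gvf \smallsmile \Bc\psi$ and $\cL_\psi\iota_\gvf(e) = \cL_\psi\gvf$, so that the graded commutator $[\iota_\gvf, \cL_\psi](e)$ produces the first two summands of the target up to signs. Each $\cT$-term on the right-hand side converts into an $\cS$-term via $\cT(\chi,\xi)(e) = (-1)^\xi\cS_\xi\chi$: namely $\cT(\gvf,\psi)(e) = (-1)^\psi\cS_\psi\gvf$, $\cT(\gd\gvf,\psi)(e) = (-1)^\psi\cS_\psi\gd\gvf$, and $\cT(\gvf,\gd\psi)(e) = (-1)^{\gd\psi}\cS_{\gd\psi}\gvf$. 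Since $\bc(e) = 0$, the commutator $[\bc, \cT(\gvf,\psi)](e)$ collapses to $(-1)^\psi\bc(\cS_\psi\gvf)$, which is then rewritten as a $\gd$-term using the parity rule $\bc = (-1)^{q-1}\gd$ on $\cO(q)$. Substituting everything into the homotopy identity and, if required, invoking the graded antisymmetry of the Gerstenhaber bracket to pass between $\{\gvf,\psi\}$ and $\{\psi,\gvf\}$, produces formula \rmref{nunjanaja1}.

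The main technical obstacle is sign bookkeeping. On the cyclic module $\cO$, the operators $\iota_\gvf$, $\cL_\gvf$, and $\cS_\gvf$ carry effective arity-shifts $\gvf$, $\gvf-1$, and $\gvf-2$ respectively, while the Gerstenhaber-shifted dg-Lie-algebra degree of $\gvf$ is $\gvf-1$; these conventions must be threaded consistently through every graded commutator, and the parity $\bc = (-1)^{q-1}\gd$ must be carried through each composition.

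To deduce the equivalent normalised form \rmref{nunjanaja2}, substitute into the term $\cL_\gvf\psi$ of \rmref{nunjanaja1} the Cartan--Rinehart homotopy identity \rmref{dellera1} applied to $\psi$,
$$
\cL_\gvf\psi = [\Bc, \iota_\gvf]\psi + [\bc, \cS_\gvf]\psi + \cS_{\gd\gvf}\psi,
$$
and rewrite the contraction $\iota_\gvf\psi$ as the cup product $\gvf \smallsmile \psi$ via \rmref{tragisch1}. Expanding the two graded commutators produces the additional $\Bc$-, $\gd$-, and $\cS$-terms appearing in \rmref{nunjanaja2}, and the identity is obtained by collecting like summands.
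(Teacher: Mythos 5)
Your proposal is correct and follows essentially the same route as the paper: the paper likewise proves \rmref{nunjanaja1} by evaluating the first identity of \rmref{radiofranceinternational} at the unit $e$ and collapsing terms via \rmref{contraction-by-e-is-identity} and \rmref{tragisch1}--\rmref{tragisch2}, and obtains \rmref{nunjanaja2} by substituting the Cartan--Rinehart homotopy for $\cL_\gvf\psi$. The only caveat is that for the second step you should cite the operadic form \rmref{dellera-operad} (with its sign on $S_{\gd\gvf}$) rather than the abstract \rmref{dellera1}, consistent with the degree conventions of \S\ref{pasticciottoleccese}.
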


\begin{proof}
Eq.~\rmref{nunjanaja1} is proven by applying the first equation in \rmref{radiofranceinternational} to the unit $e$, taking \rmref{tragisch1} and \rmref{tragisch2} into account. Eq.~\rmref{nunjanaja2} follows from \rmref{nunjanaja1} and \rmref{dellera-operad}. 
\end{proof}

\begin{rem}
Note that inserting \rmref{dellera-operad} again in \rmref{nunjanaja1}, one can also write for the bracket:
$$
\{\psi,\gvf \} = (-1)^\psi\bigl(\cL_\psi \gvf -(-1)^{\psi\gvf} \cL_\gvf\psi - \Bc(\psi \smallsmile \gvf)\bigr),
$$
which in a sense resembles the bracket for differential forms in Poisson geometry (see, for example, \cite[Eq.~(4.3)]{Vai:LOTGOPM} and the numerous references mentioned there).
\end{rem}

Theorem \ref{nocheins} allows to recover the following result first proven by Menichi in \cite[Thm.~1.4]{Men:BVAACCOHA}:

\begin{cor}
\label{erkundungen}
A cyclic operad with multiplication carries the structure of a (co)cyclic $\K$-module, and the cohomology $H^\bullet(\cO)$ of the underlying cosimplicial $\K$-module that of a Batalin-Vilkoviski\u\i\ algebra.
\end{cor}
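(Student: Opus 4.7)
The plan is to deduce the corollary from Theorem \ref{nocheins} together with standard facts about operads with multiplication, with the second part obtained by reading equation \rmref{nunjanaja2} in cohomology.

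First, I would establish the underlying (co)cyclic $\K$-module structure on $\cO$. A cyclic operad with multiplication $(\cO, \mu, e, \tau)$ is tautologically a cyclic module over itself in the sense of Definition \ref{benagol}, with both the left and right composition given by the partial operadic compositions $\circ_i$, as discussed in \S\ref{dresdenerstollen}. Applying Lemma \ref{forzafrosinone2} to $\cM = \cO$, the dual $\cO^*$ becomes a cyclic opposite $\cO$-module, and hence via \rmref{colleoppio} a cyclic $\K$-module; dualising, one obtains on $\cO$ itself a cocyclic $\K$-module structure whose cofaces are induced by $\mu$, codegeneracies by $e$, and cocyclic operator by $\tau$. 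This yields the mixed complex $(\cO, \gd, \Bc)$ underlying the calculus of Theorem \ref{nochkeinname} applied to $\cM=\cO$.

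Next, I would assemble the BV data on $H^\bullet(\cO)$. The graded commutative cup product $\smallsmile$ coming from $\mu$ (equivalently, from the contraction \rmref{tragisch1}) together with the Gerstenhaber bracket $\{\cdot,\cdot\}$ endows $H^\bullet(\cO)$ with a Gerstenhaber algebra structure; this is standard for any operad with multiplication and can be read off from the formulae of Appendix \ref{pamukkale1} and \S\ref{dresdenerstollen}. The operator $\Bc$ from Theorem \ref{nochkeinname} is a degree $-1$ differential satisfying $\Bc^2=0$ and $[\Bc,\bc]=0$ on the mixed complex level, so it descends to a well-defined degree $-1$ square-zero operator on $H^\bullet(\cO)$.

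What remains is the BV identity \rmref{avvisoagliutenti}, and this is exactly the content of equation \rmref{nunjanaja2} upon passing to cohomology: when $\gvf,\psi \in \overline{\cO}$ are cocycles, the terms $S_{\gd\gvf}\psi$, $S_\gvf\gd\psi$, $S_{\gd\psi}\gvf$, $S_\psi\gd\gvf$ all vanish because $\gd\gvf = \gd\psi = 0$, while $\delta(S_\gvf\psi)$ and $\delta(S_\psi\gvf)$ are manifest coboundaries and thus zero in $H^\bullet(\overline{\cO})\cong H^\bullet(\cO)$. Equation \rmref{nunjanaja2} therefore collapses in cohomology to
\[
\{\psi,\gvf\} \;=\; -\,\psi\smallsmile \Bc\gvf \;+\;(-1)^{(\gvf-1)\psi}\Bc(\gvf\smallsmile\psi) \;-\;(-1)^{(\gvf-1)(\psi-1)}\gvf\smallsmile \Bc\psi,
\]
which is precisely the BV identity relating $\Bc$ to the Gerstenhaber bracket and the cup product.

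The main (and modest) obstacle will be the bookkeeping needed to identify this cohomological reduction of \rmref{nunjanaja2} with the sign convention of \rmref{avvisoagliutenti1}, and to confirm that $\Bc$ vanishes on the unit (which follows from $\Bc e=0$, computed directly from the definition of $\Bc$ on $\cO(0)$, together with $\iota_{(\cdot)}e=\mathrm{id}$ as recorded in \rmref{contraction-by-e-is-identity}). Alternatively, one may argue more conceptually by invoking Theorem \ref{ruwenogien} with $\zeta=e$: the computations of \S\ref{dresdenerstollen} show that $\bc e=0$, $\Bc e=0$, and $\iota_{(\cdot)}e$ is the identity map and therefore trivially a quasi-isomorphism, so $e$ is a Palladio cocycle and Theorem \ref{ruwenogien} applies directly.
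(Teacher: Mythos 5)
Your proposal is correct and follows essentially the same route as the paper: Corollary \ref{erkundungen} is deduced from Theorem \ref{nocheins} by reading Eq.~\rmref{nunjanaja2} in cohomology, where the $\delta$-coboundaries and the terms involving $\gd\gvf$, $\gd\psi$ vanish, leaving the Batalin-Vilkoviski\u\i{} identity (the paper additionally invokes graded commutativity of $\smallsmile$ in cohomology to match the sign convention of \rmref{avvisoagliutenti}, which is exactly the bookkeeping you flag). The alternative route you sketch via Theorem \ref{ruwenogien} with the Palladio cocycle $\zeta=e$ is also viable but is not the argument the paper gives.
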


\begin{proof}
In cohomology, all $\delta$-coboundaries in \rmref{nunjanaja2} disappear, and since the cup product is (graded) commutative in cohomology, that is 
$
\gvf \smallsmile \psi = (-1)^{pq} \psi \smallsmile \gvf,
$
 Eq.~\rmref{nunjanaja2} becomes the Batalin-Vilkoviski\u\i\ equation, see Eq.~\ref{avvisoagliutenti}.
\end{proof}

\subsection{The string topology bracket for operad modules}
Finally, we can put together Theorem \ref{nochkeinname} and Theorem \ref{chas-sullivan-menichi} to get the Chas-Sullivan-Menichi degree $-2$ Lie bracket on the negative cyclic cohomology of a cyclic module $\cM$ endowed with a degree $d$ fundamental class, {\em i.e}, an element  $[\zeta]\in H^d(\cM)$ such that $\iota_{(\cdot)}[\zeta]\colon H^p(\cO)\to H^{p+d}(\cM)$ is an isomorphism for any $p$.
Namely, as in Section \ref{pasticciottoleccese}, let $\mathfrak{g}^\bullet$  be defined by $\mathfrak{g}^k :=\overline{\cO}({k+1})$, with the differential graded Lie algebra structure induced by the Gerstenhaber bracket and differential. Also, let $(M^\bullet, \bc, \Bc)$ be the mixed (cochain) complex defined by $M^k :=\overline{\cM}(d+k+1)$ with
\[
\bc:=b\colon M^k=\overline{\cM}(d+k+1)\to \overline{\cM}(d+k+2)=M^{k+2}
\]
and
\[
\Bc:=B\colon M^k=\overline{\cM}(d+k+1)\to \overline{\cM}(d+k)=M^{k-1},
\]
given by the expressions in Theorem \ref{nochkeinname}.
Also define
\begin{small}
\begin{align*}
\iota \colon & \mathfrak{g}^h\otimes M^k=  \overline{\cO}(h+1)\otimes  \overline{\cM}(d+k+1) \to \overline{\cM}(d+h+k+2)=M^{h+k+1},
\\
 \cL  \colon &\mathfrak{g}^h \otimes M^k=\overline{\cO}(h+1)\otimes  \overline{\cM}(d+k+1)  \to \overline{\cM}(d+h+k+1)=M^{h+k},
\\
\cS  \colon &\mathfrak{g}^h \otimes M^k=\overline{\cO}(h+1)\otimes \overline{\cM}(d+k+1)  \to \overline{\cM}(d+h+k)=M^{h+k-1},
\\
\cT \colon &\mathfrak{g}^h \otimes \mathfrak{g}^j \otimes M^k =\overline{\cO}(h+1)\otimes \overline{\cO}(j+1)\otimes  \overline{\cM}(d+k+1) 
\to \overline{\cM}(d+h+j+k+1)=M^{h+j+k}
\end{align*}
\end{small}
as in Theorem \ref{nochkeinname}.
Then $(\mathfrak{g}^\bullet,\iota,\cL,\cS,\cT)$ is a homotopy Cartan-Gerstenhaber calculus on $CC^\bullet_{\mathrm{per}}(M)$, and as $H^{-2}(\g)=0$, the fundamental class $[\zeta]$ is always represented by a Palladio cocycle $\zeta\in M^{-1}=\overline{\cM}(d)$. Therefore, recalling again that $H^\bullet(\cM)\cong H^\bullet(\overline{\cM})$ and $HC_-^\bullet(\cM)\cong HC^\bullet_-(\overline{\cM})$, Theorem \ref{chas-sullivan-menichi} gives

\begin{theorem}
\label{chas-sullivan-menichi-operad}
Let  $(\cO, \mu, e)$ be a cyclic operad with multiplication and $\cM$ a cyclic $\cO$-module with fundamental class $[\zeta]\in H^d(\cM)$. 
Then $HC_{-}^\bullet(\cM)$ carries a degree $(-d-2)$ Lie bracket 
\[
[\cdot,\cdot]\colon HC_-^p(\cM)\otimes HC_-^q(\cM)\to HC_-^{p+q-d-2}(\cM)
\]
defined by
\[
[{x},{y}] := (-1)^{x+d}  j ((\beta{x})\smallsmile(\beta{y})),
\]
where $\pi: H^p(\cM) \to HC^p_-(\cM)$ and $\beta\colon HC^p_-(\cM)\to H^{p-1}(\cM)$ are the morphisms in the long exact sequence \rmref{laurent-long}, and where the cup product
\[
\smallsmile\colon  H^p(\cM)\otimes H^q(\cM)\to H^{p+q-d}(\cM)
\]
is induced by the cup product $\smallsmile\colon H^{p-d}(\cO)\otimes HC^{q-d}(\cO)\to H^{(p+q-d)-d}(\cO)$ via the isomorphism $H^{d-p}(\cO)\cong H^p(\cM)$ given by contraction with the fundamental class. 
Moreover, 
$$
\beta[\cdot, \cdot] = \{ \beta(\cdot), \beta(\cdot)\},
$$
where 
\[
\{ \cdot, \cdot\}\colon H^p(\cM)\otimes H^q(\cM)\to H^{p+q-d-1}(\cM)
\]
is the degree $(-d-1)$ Lie bracket on $H^\bullet(\cM)$ induced by the Gerstenhaber bracket $\{ \cdot, \cdot\}\colon H^{p-d}(\cO)\otimes H^{q-d}(\cO)\to H^{(p+q-d-1)-d}(\cO)$ again via $H^{d-p}(\cO)\cong H^p(\cM)$.
\end{theorem}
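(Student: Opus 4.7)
The plan is to apply Theorem \ref{chas-sullivan-menichi} to the homotopy Cartan-Gerstenhaber calculus furnished by Theorem \ref{nochkeinname}, with the appropriate degree shifts dictated by the fundamental class. First I would set up the mixed cochain complex $M^\bullet$ and the dg-Lie algebra $\mathfrak{g}^\bullet$ exactly as in the paragraph preceding the theorem, namely $\mathfrak{g}^k := \overline{\cO}(k+1)$ with Gerstenhaber bracket and differential, and $M^k := \overline{\cM}(d+k+1)$ with $\bc$ and $\Bc$ given by Theorem \ref{nochkeinname}. The operators $\iota, \cL, \cS, \cT$ from Theorem \ref{nochkeinname} then provide, by construction, a homotopy Cartan-Gerstenhaber calculus on $CC^\bullet_{\rm per}(M)$ in the sense of Definition \ref{ministeroperibenieleattivitaculturali}: relations \rmref{dellera1} and \rmref{morphism-of-dgas} were established in Theorem \ref{nochkeinname}, the Gelfan'd-Daletski\u\i-Tsygan identity \rmref{panem} is guaranteed by \rmref{radiofranceinternational}, and the Gerstenhaber product structure on $\mathcal{G}^\bullet_\g$ is the usual cup product on $\overline{\cO}$.

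Next I would upgrade the fundamental class $[\zeta]\in H^d(\cM)$ to a Palladio cocycle in the sense of Definition \ref{definition-of-p}. Concretely, pick a representative $\zeta\in \overline{\cM}(d)=M^{-1}$, which automatically satisfies $\bc\zeta=0$. Since $\g^{-2}=\overline{\cO}(-1)=0$ forces $H^{-2}(\g)=0$, Remark \ref{bocconotti} tells us that the only nontrivial condition to check is that $\iota_{(\cdot)}\zeta\colon H^\bullet(\g)\to H^\bullet(M)$ is an isomorphism. But by identity \rmref{tragisch1}, contraction with $\zeta$ at the operadic level is literally the cup product with $[\zeta]$, so the assumption that the fundamental class induces an isomorphism $H^p(\cO) \xrightarrow{\sim} H^{p+d}(\cM)$ for all $p$ is exactly the required quasi-isomorphism condition; replacing $\zeta$ by a cohomologous element if necessary (using Lemma \ref{cohomologous-to-palladio}) makes it genuinely Palladio.

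Having this, Theorem \ref{chas-sullivan-menichi} applies verbatim and yields the Chas-Sullivan-Menichi bracket $[x,y]=(-1)^x j((\beta x)\smallsmile(\beta y))$ on $HC^\bullet_-(\mathcal{G}_M)=HC^\bullet_-(M)[-1]$, together with the identity $\beta[\cdot,\cdot]=\{\beta(\cdot),\beta(\cdot)\}$ for the transferred Gerstenhaber bracket $\{\cdot,\cdot\}$ on $H^\bullet(\mathcal{G}_M)$ defined via \rmref{bracket-on-M}. The last step is a purely bookkeeping translation: unwind the shift $M^k=\overline{\cM}(d+k+1)$ to see that a class in $HC^p_-(\cM)$ corresponds to one in degree $p-d-1$ in the shifted complex, so the degree of the bracket becomes $-d-2$; similarly, the cup product on $H^\bullet(M)$ transferred from $H^\bullet(\cO)$ via $p_\zeta$ becomes, under the isomorphism $H^{p-d}(\cO)\cong H^p(\cM)$, the product $\smallsmile\colon H^p(\cM)\otimes H^q(\cM)\to H^{p+q-d}(\cM)$ in the statement, and likewise for the Gerstenhaber bracket. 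The overall sign $(-1)^{x+d}$ (as opposed to $(-1)^x$ in Theorem \ref{chas-sullivan-menichi}) comes precisely from re-indexing $M^\bullet=\overline{\cM}[d+1]$, and I would finally replace $\overline{\cM}$ and $\overline{\cO}$ by $\cM$ and $\cO$ using the standard quasi-isomorphisms $H^\bullet(\cM)\cong H^\bullet(\overline{\cM})$ and $HC^\bullet_-(\cM)\cong HC^\bullet_-(\overline{\cM})$ cited for the chain case after Remark \ref{landweber}.

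The only genuinely delicate point, and the one I would be most careful with, is keeping track of signs and degree shifts during the indexing translation: the calculus in Theorem \ref{chas-sullivan-menichi} lives on $H^\bullet(\mathcal{G}_M)=H^\bullet(M)[-1]$, the Palladio cocycle sits in degree $-1$, and the cohomological grading on $M$ is shifted against the operadic grading on $\cM$ by $d+1$. Checking that this composite shift produces exactly the sign $(-1)^{x+d}$ and the stated degree $-d-2$, rather than an off-by-one artefact, is the main obstacle to a clean writeup; everything else is a direct specialisation of Theorems \ref{nochkeinname} and \ref{chas-sullivan-menichi}.
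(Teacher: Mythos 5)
Your proposal is correct and follows essentially the same route as the paper: set up $\mathfrak{g}^k=\overline{\cO}(k+1)$ and $M^k=\overline{\cM}(d+k+1)$, observe that Theorem \ref{nochkeinname} gives a homotopy Cartan-Gerstenhaber calculus, note via $H^{-2}(\g)=0$ and Remark \ref{bocconotti} that the fundamental class is represented by a Palladio cocycle, and then specialise Theorem \ref{chas-sullivan-menichi} with the indicated degree and sign bookkeeping. The only cosmetic slip is your appeal to \rmref{tragisch1}, which holds only in the special case $\cM=\cO$; for a general cyclic module the quasi-isomorphism condition is simply the defining property of the fundamental class and needs no further argument.
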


\begin{example}[The Menichi bracket for cyclic operads]
If $(\cO, \mu, e, \tau)$ is a cyclic operad with multiplication, then as in \S\ref{dresdenerstollen} we can take as a cyclic module the operad itself. This has a degree zero fundamental class represented by the unit element $e\in \cO(0)$, see Equations \rmref{contraction-by-e-is-identity} \& \rmref{tragisch2}. Then one gets a degree $-2$ Lie bracket on the negative cyclic cohomology of $\cO$, and the connecting homomorphism $\beta\colon HC^p_-(\cO)\to H^{p-1}(\cO)$ defines a morphism of shifted Lie algebras between  the negative cyclic cohomology of $\cO$ and the cohomology of $\cO$ (with the Gerstenhaber bracket). This degree $-2$ Lie bracket on $HC^\bullet_-(\cO)$ has been introduced by Menichi in \cite[Cor.~1.5]{Men:BVAACCOHA}, generalising the string topology bracket of Chas-Sullivan \cite[Thm.~6.1]{ChaSul:ST}.
\end{example}

\appendix

\section{Basic operadic definitions}
\label{pamukkale}

\subsection{Operads and Gerstenhaber algebras}
\label{pamukkale1}
Recall that a {\em non-$\gS$ operad} $\cO$ in the category 
of $\K$-modules is a sequence $\{\cO(n)\}_{n \geq 0}$ of $\K$-modules 
with $\K$-bilinear operations $\circ_i: \cO(p) \otimes \cO(q) \to \cO({p+q-1})$, $i = 1, \ldots, p$,
subject to the compatibility relations: 
\begin{eqnarray}
\label{danton}
\nonumber
\gvf \circ_i \psi &=& 0 \qquad \qquad \qquad \qquad \qquad \! \mbox{if} \ p < i \quad \mbox{or} \quad p = 0, \\
(\varphi \circ_i \psi) \circ_j \chi &=& 
\begin{cases}
(\varphi \circ_j \chi) \circ_{i+r-1} \psi \qquad \mbox{if} \  \, j < i, \\
\varphi \circ_i (\psi \circ_{j-i +1} \chi) \qquad \hspace*{1pt} \mbox{if} \ \, i \leq j < q + i, \\
(\varphi \circ_{j-q+1} \chi) \circ_{i} \psi \qquad \mbox{if} \ \, j \geq q + i.
\end{cases}
\end{eqnarray}

See, for example, \cite[Def.~1.1]{Mar:MFO}. We call an operad {\em unital} if there is an {\em identity} $\mathbb{1} \in \cO(1)$ such that 
$
\gvf \circ_i \mathbb{1} = \mathbb{1} \circ_1 \gvf = \gvf
$ 
for all $\gvf \in \cO(p)$ and $i \leq p$, and we say that the operad is {\em with multiplication} if there exists a {\em multiplication}  $\mu \in \cO(2)$ along with a {\em unit} $e \in \cO(0)$ such that $\mu \circ_1 \mu = \mu \circ_2 \mu$ and 
$\mu \circ_1 e = \mu \circ_2 e = \mathbb{1}$. We denote such an object by the triple $(\cO, \mu, e)$.

To an operad with multiplication  $(\cO, \mu, e)$ one naturally associates the structure of a cosimplicial $\K$-module \cite{McCSmi:ASODHCC} with faces and degeneracies $\gvf \in \cO(p)$ given by $\gd_0 \gvf := \mu \circ_1 \gvf$, $\gd_i \gvf := \gvf \circ_{p-i+1} \mu$ for $i = 1, \ldots, p$, and $\gd_{p+1} \gvf := \mu \circ_2 \gvf$, along with $\sigma_j(\gvf) := \gvf \circ_{p-j} e$ for $j = 0, \ldots, p-1$; observe the opposite convention we adopt here. Hence (by the Dold-Kan correspondence), one obtains a cochain complex,
which we will denote by the same symbol $\cO$, with $\cO(n)$ in degree $n$ and 
 with differential $\gd: \cO(n) \to \cO({n+1})$ given by $\gd := \sum^{n+1}_{i=0} (-1)^i \gd_i$, and cohomology defined by 
$
H^\bullet(\cO) := H(\cO, \gd).
$ 

Moreover,  for $\gvf \in \cO(p)$ and $\psi \in \cO(q)$ one defines the cup product to be $\psi \smallsmile \gvf := (\mu \circ_2 \psi) \circ_1 \gvf \in \cO(p+q)$ and then the triple $(\cO, \smallsmile, \gd)$ forms a dg algebra. Defining on top the bracket ${\{} \varphi,\psi \}
:= \varphi\{\psi\} - (-1)^{(p-1)(q-1)} \psi\{\varphi\}$, where $\varphi\{\psi\} := \sum^{p}_{i=1}
        (-1)^{(q-1)(i-1)} \varphi \circ_i \psi  \in \cO({p+q-1})$ is the {\em brace} \cite{Ger:TCSOAAR, Kad:AIASICATRHT, Get:BVAATDTFT}.

Descending to cohomology, it is a straightforward check that the triple $(H^\bullet(\cO), \smallsmile, \{\cdot, \cdot\})$ forms a Gerstenhaber algebra \cite{Ger:TCSOAAR, GerSch:ABQGAAD, McCSmi:ASODHCC}. Notice that one has $\{\mu,\mu\} = 0$ as well as $\gd = \{\mu, \cdot \}$.

\subsection{Cyclic operads and Batalin-Vilkoviski\u\i\ algebras}
\label{pamukkale2}
On the other hand, a {\em cyclic operad} \cite{GetKap:COACH} is an operad $\cO$ in the sense of the preceding paragraph plus a degree-preserving linear map $\tau: \cO(p) \to \cO(p)$ for $p \in \N$ such that 
\begin{eqnarray*}
\tau(\gvf \circ_i \psi) &=& \tau\gvf \circ_{i-1} \psi, \qquad \mbox{for} \ p \geq 2, q \geq 0, 2 \geq i \geq p, \\
\tau(\gvf \circ_1 \psi) &=& \tau \psi \circ_q \tau\gvf, \qquad \mbox{for} \ p \geq 1, q \geq 1, \\ 
\tau^{p+1} &=& \id_{\cO(p)},
\end{eqnarray*}
for each $\gvf \in O(p)$ and $\psi \in \cO(q)$, and we denote this situation by $(\cO, \tau)$.

A {\em cyclic operad with multiplication} \cite{Men:BVAACCOHA}
is simultaneously an operad with multiplication and a cyclic operad, such that 
$
\tau \mu = \mu,
$
and we will denote such an object by $(\cO, \mu, e, \tau)$. Descending to cohomology, it is a not-so-straightforward check that the quadruple $(H^\bullet(\cO), \smallsmile, \{\cdot, \cdot\}, B)$ forms a Batalin-Vilkoviski\u\i\ algebra 
\cite{Men:BVAACCOHA}, see also Corollary \ref{erkundungen}.

\section{Proof of Theorem \ref{nightmare}}
\label{shock&awe}

This appendix contains the computations by which Theorem \ref{nightmare} is proven and which has been moved here so as not to impede the reading flow of the main text. 

\begin{proof}[Proof of Theorem \ref{nightmare}]

Assume without loss of generality that 
$p + q \leq n+1$. 
By direct computation using \rmref{messagedenoelauxenfantsdefrance2}, \rmref{alles4}, and the identities in \rmref{SchlesischeStr}, one obtains (after a while):
\begin{footnotesize}
\begin{equation*}
\label{nichschonwieder}
\begin{split}
[i_\psi, L_\gvf](x)  
&= 
\sum^{q}_{i=1} (-1)^{(p-1)(i-1)} (\mu \circ_2 \psi) \bullet_0(\gvf  \bullet_i x)
\\
&
\qquad 
- \sum^{p}_{i=1} (-1)^{(n-q)(i-1) + (p-1)q} (\gvf \circ_i (\mu \circ_2 \psi)) \bullet_0 t^{i-1}(x) \\
& \qquad + \sum^{p}_{i=1} (-1)^{n(i-1) + p-1} ((\mu \circ_2 \psi) \circ_1 \gvf)) \bullet_0 t^{i-1}(x) 
\\
&= i_{\psi\{\gvf\}} x
- \sum^{p}_{i=1} (-1)^{(n-q)(i-1) + (p-1)q} (\gvf \circ_i (\mu \circ_2 \psi)) \bullet_0 t^{i-1}(x) \\
& \qquad + \sum^{p}_{i=1} (-1)^{n(i-1) + p-1} ((\mu \circ_2 \psi) \circ_1 \gvf)) \bullet_0 t^{i-1}(x).
\end{split}
\end{equation*}
\end{footnotesize}
Hence, to prove \rmref{panem2}, we are left to show that 
\begin{footnotesize}
\begin{equation}
\label{incubo}
\begin{split}
&
\sum^{p}_{i=1} (-1)^{n(i-1) + p-1} ((\mu \circ_2 \psi) \circ_1 \gvf)) \bullet_0 t^{i-1}(x) 
\\
& - 
\sum^{p}_{i=1} (-1)^{(n-q)(i-1) + (p-1)q} (\gvf \circ_i (\mu \circ_2 \psi)) \bullet_0 t^{i-1}(x) 
\\
&
 = - (-1)^{(p-1)(q-1)} i_{\gvf\{\psi\}}x  
+ [b, T(\gvf,\psi)] - T(\gd \gvf, \psi) - (-1)^{p-1} T(\gvf, \delta \psi). 
\end{split}
\end{equation}
\end{footnotesize}
We proceed by explicitly writing down all terms and compare them one by one, making heavy use of \rmref{SchlesischeStr}--\rmref{colleoppio}, the ``associativity'' properties \rmref{danton} of the partial compositions $\circ_i$ of the operad, as well as enforced double (or triple) sum yoga plus {\em numerous} summation variable substitutions. 
To this end, we number the terms so that each number corresponds to a (single, double, or triple) sum, with all signs. For example, write the left hand side in \rmref{incubo} as
\begin{footnotesize}
\begin{equation*}
\begin{split}
&
\sum^{p}_{i=1} (-1)^{n(i-1) + p-1} ((\mu \circ_2 \psi) \circ_1 \gvf)) \bullet_0 t^{i-1}(x) 
\\
&
- 
\sum^{p}_{i=1} (-1)^{(n-q)(i-1) + (p-1)q} (\gvf \circ_i (\mu \circ_2 \psi)) \bullet_0 t^{i-1}(x) 
\\
&
=: (1) + (2),
\end{split}
\end{equation*}
\end{footnotesize}
whereas the right hand side in \rmref{incubo} becomes 
\begin{footnotesize}
\begin{equation*}
\begin{split}
&
- (-1)^{(p-1)(q-1)} i_{\gvf\{\psi\}}x  
+ [b, T(\gvf,\psi)] - T(\gd \gvf, \psi) - (-1)^{p-1} T(\gvf, \delta \psi). 
\\
&
= 
- (-1)^{(p-1)(q-1)} \sum^p_{i=1} (-1)^{(q-1)(i-1)} (\mu \circ_2 (\gvf \circ_i \psi)) \bullet_0 x  
\\
&
\quad +  
\sum^{n-p-q+1}_{k=1} (-1)^k \sum^{p-1}_{j=1} \sum^{p-1}_{i=j} (-1)^{n(j-1) + (q-1)(i-j) + p} \mu \bullet_k \big((\gvf \circ_{p-i+j} \psi) \bullet_0 t^{j-1} x\big) 
\\
&
\quad 
+
  \sum^{p-1}_{j=1} \sum^{p-1}_{i=j} (-1)^{n(j-1) + (q-1)(i-j) + p} (\mu \circ_1 (\gvf \circ_{p-i+j} \psi)) \bullet_0 t^{j-1} x 
\\
&
\quad 
+
(-1)^{n-p-q}   \sum^{p-1}_{j=1} \sum^{p-1}_{i=j} (-1)^{n(j-1) + (q-1)(i-j) + p} (\mu \circ_2 (\gvf \circ_{p-i+j} \psi)) \bullet_0 t^{j} x 
\\
&
\quad 
- (-1)^{p+q}
  \sum^{p-1}_{j=1} \sum^{p-1}_{i=j} (-1)^{(n-1)(j-1) + (q-1)(i-j) + p} (\gvf \circ_{p-i+j} \psi) \bullet_0 t^{j-1} (\mu \bullet_0 x)  
\\
&
\quad 
- (-1)^{p+q+n}
  \sum^{p-1}_{j=1} \sum^{p-1}_{i=j} (-1)^{(n-1)(j-1) + (q-1)(i-j) + p} (\gvf \circ_{p-i+j} \psi) \bullet_0 t^{j-1} (\mu \bullet_0 t x)
\\
&
\quad 
- (-1)^{p+q}
 \sum^{n-1}_{k=1} (-1)^k \sum^{p-1}_{j=1} \sum^{p-1}_{i=j} (-1)^{(n-1)(j-1) + (q-1)(i-j) + p} (\gvf \circ_{p-i+j} \psi) \bullet_0 t^{j-1} (\mu \bullet_k x)  
\\
\end{split}
\end{equation*}
\end{footnotesize}

\begin{footnotesize}
\begin{equation*}
\begin{split}
&
\quad
-
   \sum^{p}_{j=1} \sum^{p}_{i=j} (-1)^{n(j-1) + (q-1)(i-j) + p+1} ((\mu \circ_1 \gvf) \circ_{p+1-i+j} \psi) \bullet_0 t^{j-1} x 
\\
&
\quad 
- (-1)^{p+1}
   \sum^{p}_{j=1} \sum^{p}_{i=j} (-1)^{n(j-1) + (q-1)(i-j) + p+1} ((\mu \circ_2 \gvf) \circ_{p+1-i+j} \psi) \bullet_0 t^{j-1} x 
\\
&
\quad 
- 
\sum^{p}_{k=1} (-1)^{p-k+1}  \sum^{p}_{j=1} \sum^{p}_{i=j} (-1)^{n(j-1) + (q-1)(i-j) + p+1} ((\gvf \circ_k \mu) \circ_{p+1-i+j} \psi) \bullet_0 t^{j-1} x 
\\
&
- (-1)^p
\sum^{p-1}_{j=1} \sum^{p-1}_{i=j} (-1)^{n(j-1) + q(i-j) + p} (\gvf \circ_{p-i+j} (\mu \circ_1 \psi)) \bullet_0 t^{j-1} x 
\\
&
\quad 
- (-1)^{p+q+1}
\sum^{p-1}_{j=1} \sum^{p-1}_{i=j} (-1)^{n(j-1) + q(i-j) + p} (\gvf \circ_{p-i+j} (\mu \circ_2 \psi)) \bullet_0 t^{j-1} x 
\\
&
\quad 
- (-1)^p
\sum^{q}_{k=1} (-1)^{q-k+1}  \sum^{p-1}_{j=1} \sum^{p-1}_{i=j} (-1)^{n(j-1) + q(i-j) + p-1} (\gvf \circ_{p-i+j} (\psi \circ_k \mu)) \bullet_0 t^{j-1} x 
\\
&
=: (3) + (4) + (5)  + (6)  + (7)  + (8)  + (9)  + (10)  + (11)  + (12)  + (13)  + (14) + (15).
\end{split}
\end{equation*}
\end{footnotesize}
One now sees that 
\begin{footnotesize}
\begin{equation*}
\begin{split}
(10) 
&
=
 -
   \sum^{p-1}_{j=1} \sum^{p}_{i=j+1} (-1)^{n(j-1) + (q-1)(i-j) + p+1} ((\mu \circ_1 \gvf) \circ_{p+1-i+j} \psi) \bullet_0 t^{j-1} x  
\\
&
\quad 
-
   \sum^{p}_{j=1}  (-1)^{n(j-1) + p} ((\mu \circ_1 \gvf) \circ_{p+1} \psi) \bullet_0 t^{j-1} x 
\\
&
= 
-(5) + (1),
\end{split}
\end{equation*}
\end{footnotesize}
whereas
\begin{footnotesize}
\begin{equation*}
\begin{split}
(11) 
&
=
 (-1)^{p}
   \sum^{p}_{j=2} \sum^{p}_{i=j} (-1)^{n(j-1) + (q-1)(i-j) + p+1} (\mu \circ_2 (\gvf \circ_{p-i+j} \psi)) \bullet_0 t^{j-1} x  
\\
&
\qquad 
+  (-1)^{p}
   \sum^{p}_{i=1} (-1)^{(q-1)(i-1) +p} (\mu \circ_2 (\gvf \circ_{p-i+1} \psi)) \bullet_0 x 
\\
&
=  
-(6) - (3).
\end{split}
\end{equation*}
\end{footnotesize}
Furthermore, 
\begin{footnotesize}
\begin{equation*}
\begin{split}
-(2) + (7) + (12) &= 
-(2) + (7) 
- 
\sum^{p}_{k=1} (-1)^{p-k+1}  \sum^{p}_{j=1} (-1)^{n(j-1) + (q-1)j + qp +1} ((\gvf \circ_k \mu) \circ_{j+1} \psi) \bullet_0 t^{j-1} x
\\
&
\quad 
- 
\sum^{p}_{k=1} (-1)^{p-k+1}  \sum^{p-1}_{j=1} \sum^{p-1}_{i=j} (-1)^{n(j-1) + (q-1)(i-j) + p+1} ((\gvf \circ_k \mu) \circ_{p+1-i+j} \psi) \bullet_0 t^{j-1} x
\\
&
=
(7) 
- 
\sum^{p}_{k=2} (-1)^{p-k+1}  \sum^{k-1}_{j=1} (-1)^{n(j-1) + (q-1)j + qp+1} ((\gvf \circ_k \mu) \circ_{j+1} \psi) \bullet_0 t^{j-1} x
\\
&
\quad 
- 
\sum^{p-1}_{k=1} (-1)^{p-k+1}  \sum^{p}_{j=k+1} (-1)^{n(j-1) + (q-1)j + qp+1} ((\gvf \circ_k \mu) \circ_{j+1} \psi) \bullet_0 t^{j-1} x
\\
&
\quad 
- 
\sum^{p-1}_{j=1} \sum^{p-1}_{i=j} (-1)^{n(j-1) + (q-1)(i-j) + p+j+1} ((\gvf \circ_j \mu) \circ_{p+1-i+j} \psi) \bullet_0 t^{j-1} x
\\
&
\quad 
- 
\sum^{p-1}_{j=2} \sum^{p-1}_{i=j} \sum^{j-1}_{k=1} (-1)^{n(j-1) + (q-1)(i-j) + k} ((\gvf \circ_k \mu) \circ_{p+1-i+j} \psi) \bullet_0 t^{j-1} x
\\
&
\quad 
- 
\sum^{p-1}_{j=1} \sum^{p-1}_{i=j} \sum^{p}_{k=j+1} (-1)^{n(j-1) + (q-1)(i-j) + k} ((\gvf \circ_k \mu) \circ_{p+1-i+j} \psi) \bullet_0 t^{j-1} x
\\
&
=
- 
\sum^{p}_{k=2} \sum^{k-1}_{j=1} (-1)^{n(j-1) + (q-1)j + qp+k + p+1} ((\gvf \circ_k \mu) \circ_{j+1} \psi) \bullet_0 t^{j-1} x
\\
&
\quad 
- 
\sum^{p-1}_{k=1} \sum^{p}_{j=k+1} (-1)^{n(j-1) + (q-1)j + qp+k+p+1} ((\gvf \circ_k \mu) \circ_{j+1} \psi) \bullet_0 t^{j-1} x
\\
&
\quad 
- 
\sum^{p-1}_{j=2} \sum^{p-1}_{i=j} \sum^{j-1}_{k=1} (-1)^{n(j-1) + (q-1)j + qp+k+p+1} ((\gvf \circ_k \mu) \circ_{p+1-i+j} \psi) \bullet_0 t^{j-1} x
\\
&
\quad 
- 
\sum^{p-1}_{j=1} \sum^{p-1}_{i=j} \sum^{p}_{k=j+1} (-1)^{n(j-1) + (q-1)j + qp+k+p+1} ((\gvf \circ_k \mu) \circ_{p+1-i+j} \psi) \bullet_0 t^{j-1} x
\\
&
=: (16) + (17) + (18) + (19).
\end{split}
\end{equation*}
\end{footnotesize}
On the other hand, observe that 
\begin{footnotesize}
\begin{equation*}
\begin{split}
(4) &
= 
\sum^{n-p-q+1}_{k=1} (-1)^k \sum^{p-1}_{j=1} \sum^{p-1}_{i=j} (-1)^{n(j-1)+(q-1)(i-j)+p} (\gvf \circ_{p-i+j} \psi) \bullet_0 (\mu \bullet_{k+p+q-2} t^{j-1} x) 
\\
&
=
\sum^{n-1}_{k=p+q-1} (-1)^{p+q-k} \sum^{p-1}_{j=1} \sum^{p-1}_{i=j} (-1)^{n(j-1)+(q-1)(i-j)+p} (\gvf \circ_{p-i+j} \psi) \bullet_0 (\mu \bullet_k t^{j-1} x).
\end{split}
\end{equation*}
\end{footnotesize}
One then has
\begin{footnotesize}
\begin{equation*}
\begin{split}
(4) + (9) &
= (4) 
 - (-1)^{p+q}
 \sum^{p-1}_{j=1} \sum^{p-1}_{i=j}  \sum^{n-j}_{k=1} (-1)^{n(j-1)+(q-1)(i-j)+p+k} (\gvf \circ_{p-i+j} \psi) \bullet_0 (\mu \bullet_{k+j-1} t^{j-1} x) 
\\
&
\qquad
- 
(-1)^{p+q}
\sum^{p-1}_{j=1} \sum^{p-1}_{i=j}  \sum^{n-1}_{k=n-j+1} (-1)^{n(j-1)+(q-1)(i-j)+p+k} (\gvf \circ_{p-i+j} \psi) \bullet_0 t^{j-1+k} (\mu \bullet_0 t^{n+1-k} x) 
\\
&
=
 - (-1)^{p+q}
 \sum^{p-1}_{j=1} \sum^{p-1}_{i=j}  \sum^{p+q-2}_{k=j} (-1)^{n(j-1)+(q-1)(i-j)+p+k} (\gvf \circ_{p-i+j} \psi) \bullet_0 (\mu \bullet_{k} t^{j-1} x) 
\\
&
\qquad
- 
(-1)^{p+q}
\sum^{p-1}_{j=2} \sum^{p-1}_{i=j}  \sum^{j-2}_{k=0} (-1)^{n(j-1)+(q-1)(i-j)+p+k} ((\gvf \circ_{p-i+j} \psi) \circ_{k+1} \mu) \bullet_0 t^j x 
\\
&
=: (20) + (21).
\end{split}
\end{equation*}
\end{footnotesize}
At this point, we still have to deal with the terms $(8)$ and $(13)$--$(21)$.
We continue by
\begin{footnotesize}
\begin{equation*}
\begin{split}
(14) &
+ (15) + (19) + (20) 
\\
&
=
(14) + (15)+ (19)
 - (-1)^{p+q}
 \sum^{p-1}_{j=1} \sum^{p-1}_{i=j}  \sum^{p+q-2}_{k=j} (-1)^{n(j-1)+(q-1)(i-j)+p+k} ((\gvf \circ_{p-i+j} \psi) \circ_{k+1} \mu) \bullet_0 t^{j-1} x
\\
&
=
(14) + (15) + (19)
 - (-1)^{p+q}
 \sum^{p-1}_{j=1} \sum^{p-1}_{i=j}  \sum^{p-i+j-2}_{k=j} (-1)^{n(j-1)+(q-1)(i-j)+p+k} ((\gvf \circ_{k+1} \mu) \circ_{p-i+j+1} \psi) \bullet_0 t^{j-1} x
 \\
&
\qquad
 - (-1)^{p+q}
 \sum^{p-1}_{j=1} \sum^{p-1}_{i=j}  \sum^{p+q-2-i+j}_{k=p-i+j-1} (-1)^{n(j-1)+(q-1)(i-j)+p+k} (\gvf \circ_{p-i+j} (\psi \circ_{k+2-p+i-j} \mu)) \bullet_0 t^{j-1} x
 \\
&
\qquad
 - (-1)^{p+q}
 \sum^{p-1}_{j=1} \sum^{p-1}_{i=j}  \sum^{p+q-2}_{k=p+q-i+j+1} (-1)^{n(j-1)+(q-1)(i-j)+p+k} ((\gvf \circ_{k+2-q} \mu) \circ_{p-i+j} \psi) \bullet_0 t^{j-1} x
 \\
&
= (14) - (-1)^{p+q}
 \sum^{p-1}_{j=1} \sum^{p-1}_{i=j}  \sum^{p-1}_{k=p-i+j-1} (-1)^{n(j-1)+(q-1)(i-j)+p+k} ((\gvf \circ_{k+1} \mu) \circ_{p-i+j+1} \psi) \bullet_0 t^{j-1} x
 \\
&
\qquad
 - (-1)^{p+q}
 \sum^{p-1}_{j=1} \sum^{p-1}_{i=j}  \sum^{p+q-2}_{k=p+q-i+j+1} (-1)^{n(j-1)+(q-1)(i-j)+p+k} ((\gvf \circ_{k+2-q} \mu) \circ_{p-i+j} \psi) \bullet_0 t^{j-1} x
 \\
&
= - (-1)^{p+q}
 \sum^{p-1}_{j=1} \sum^{p-1}_{i=j}  \sum^{p-1}_{k=p-i+j} (-1)^{n(j-1)+(q-1)(i-j)+p+k} ((\gvf \circ_{k+1} \mu) \circ_{p-i+j+1} \psi) \bullet_0 t^{j-1} x
 \\
&
\qquad
 - (-1)^{p+q}
 \sum^{p-1}_{j=1} \sum^{p-1}_{i=j}  \sum^{p+q-2}_{k=p+q-i+j+1} (-1)^{n(j-1)+(q-1)(i-j)+p+k} ((\gvf \circ_{k+2-q} \mu) \circ_{p-i+j} \psi) \bullet_0 t^{j-1} x
 \\
&
= 
- (-1)^{p+q}
 \sum^{p-2}_{j=1} \sum^{p-2}_{i=j}  \sum^{p-2}_{k=i+1} (-1)^{n(j+p-1)+(q-1)(i-j)+k} ((\gvf \circ_{k+2} \mu) \circ_{i+2} \psi) \bullet_0 t^{j-1} x
\\
&
\qquad
- (-1)^{p+q}
 \sum^{p-2}_{j=1} \sum^{p-2}_{i=j}  (-1)^{n(j+p-1)+(q-1)(i-j)+p+i+1} ((\gvf \circ_{i+2} \mu) \circ_{i+2} \psi) \bullet_0 t^{j-1} x
\\
&
\qquad
 - (-1)^{p+q}
 \sum^{p-2}_{j=1} \sum^{p-1}_{i=j+1}  \sum^{p-1}_{k=i+1} (-1)^{nj+(q-1)(i-j+1)+p+k} ((\gvf \circ_{k+1} \mu) \circ_{i+1} \psi) \bullet_0 t^{j-1} x
\\
&
\qquad
- (-1)^{p+q}
 \sum^{p-2}_{j=1} \sum^{p-1}_{k=j+1} (-1)^{nj+(q-1)(j-1)+p+j} ((\gvf \circ_{k+1} \mu) \circ_{j+1} \psi) \bullet_0 t^{j-1} x
\\
&
=: (22) + (23).
\end{split}
\end{equation*}
\end{footnotesize}
Now we are left with terms $(8)$, $(13)$, $(16)$--$(18)$, as well as $(21)$--$(23)$. Consider first
\begin{footnotesize}
\begin{equation*}
\begin{split}
(8) &
= 
- (-1)^{p+q+n}
  \sum^{p-1}_{j=1} \sum^{p-1}_{i=j} (-1)^{n(j-1)+(q-1)(i-j)+p} (\gvf \circ_{p-i+j} \psi) \bullet_0 t^{j-1} (\mu \bullet_0 t x)
\\
&
= 
- (-1)^{p+q+n}
  \sum^{p-1}_{j=1} \sum^{p-1}_{i=j} (-1)^{n(j-1)+(q-1)(i-j)+p} ((\gvf \circ_{p-i+j} \psi) \circ_j \mu) \bullet_0 t^j x
\\
&
= 
- (-1)^{p+q+n}
  \sum^{p-1}_{j=1} \sum^{p-1}_{i=j} (-1)^{n(j-1)+(q-1)(i-j)+p} ((\gvf \circ_j \mu) \circ_{p-i+j+1} \psi) \bullet_0 t^j x,
\end{split}
\end{equation*}
\end{footnotesize}
along with
\begin{footnotesize}
\begin{equation*}
\begin{split}
(21) &
= 
- 
(-1)^{p+q}
\sum^{p-1}_{j=2} \sum^{p-1}_{i=j}  \sum^{j-2}_{k=0} (-1)^{n(j-1)+(q-1)(i-j)+p+k} ((\gvf \circ_{p-i+j} \psi) \circ_{k+1} \mu) \bullet_0 t^j x 
\\
&=
- (-1)^{p+q}
\sum^{p-1}_{j=2} \sum^{p-1}_{i=j}  \sum^{j-2}_{k=0} (-1)^{n(j-1)+(q-1)(i-j)+p+k} ((\gvf \circ_{k+1} \mu) \circ_{p-i+j+1} \psi) \bullet_0 t^j x 
\end{split}
\end{equation*}
\end{footnotesize}
Hence, 
\begin{footnotesize}
\begin{equation*}
\begin{split}
(8) + (17)
&
= 
- (-1)^{p+q+n}
  \sum^{p-1}_{k=2} \sum^{k-1}_{j=1} (-1)^{n(j-1)+qj+p+k} ((\gvf \circ_j \mu) \circ_{k+2} \psi) \bullet_0 t^j x 
\\
&
\qquad
- (-1)^{p+q+n}
  \sum^{p-1}_{k=1} (-1)^{(q-1)k+p} ((\gvf \circ_k \mu) \circ_{k+2} \psi) \bullet_0 t^k x 
\\
&
\qquad
- 
\sum^{p-2}_{k=1} \sum^{p-1}_{j=k+1} (-1)^{k + n(j-1)+qj+p} ((\gvf \circ_k \mu) \circ_{j+2} \psi) \bullet_0 t^{j} x
\\
&
\qquad
- 
\sum^{p-1}_{k=1} (-1)^{k + qk+1} ((\gvf \circ_k \mu) \circ_{k+2} \psi) \bullet_0 t^k x
\\
&
= 
- (-1)^{p+q+n}
  \sum^{p-1}_{k=2} \sum^{k-1}_{j=1} (-1)^{n(j-1)+qj+p+k} ((\gvf \circ_j \mu) \circ_{k+2} \psi) \bullet_0 t^j x 
\\
&
\qquad
- 
\sum^{p-2}_{k=1} \sum^{p-1}_{j=k+1} (-1)^{k + n(j-1)+qj+p} ((\gvf \circ_k \mu) \circ_{j+2} \psi) \bullet_0 t^{j} x.
\end{split}
\end{equation*}
\end{footnotesize}
Likewise,
\begin{footnotesize}
\begin{equation*}
\begin{split}
(18) + (21) &= 
\sum^{p-2}_{j=1} \sum^{p-1}_{i=j+1} (-1)^{n(j-1) + q(j-i) +p+1 + j} ((\gvf \circ_j \mu) \circ_{i+2} \psi) \bullet_0 t^{j} x
\\
&
\qquad
- 
\sum^{p-2}_{j=2} \sum^{p-1}_{i=j+1} \sum^{j-1}_{k=1} (-1)^{n(j-1) + q(j-i) + j + k} ((\gvf \circ_k \mu) \circ_{i+2} \psi) \bullet_0 t^{j} x
\\
&
\qquad
- (-1)^{p+q}
\sum^{p-1}_{j=2} \sum^{j-2}_{k=0} (-1)^{n(j-1) + (q-1)j +p+k} ((\gvf \circ_{k+1} \mu) \circ_{j+2} \psi) \bullet_0 t^j x 
\\
&
\qquad
- (-1)^{p+q}
\sum^{p-2}_{j=2} \sum^{p-1}_{i=j+1}  \sum^{j-2}_{k=0} (-1)^{n(j-1) + q(j-i) + j+k} ((\gvf \circ_{k+1} \mu) \circ_{i+2} \psi) \bullet_0 t^j x.
\\
\end{split}
\end{equation*}
\end{footnotesize}
Therefore,
$$
(8) + (17) + (18) + (21) = 0.
$$
In a similar spirit we deal with the four remaining terms $(13)$, $(16)$, $(22)$, and $(23)$. One has
\begin{footnotesize}
\begin{equation*}
\begin{split}
(13) + (16) &= 
- (-1)^p
\sum^{p-2}_{j=1} \sum^{p-2}_{i=j} (-1)^{n(j-1) + q(i-j) + p} ((\gvf \circ_{i+2} \mu) \circ_{i+2} \psi) \bullet_0 t^{j-1} x 
\\
&
\qquad
- (-1)^p
\sum^{p-1}_{j=1} (-1)^{n(j-1) + q(j-1) + p} ((\gvf \circ_{j+1} \mu) \circ_{j+1} \psi) \bullet_0 t^{j-1} x
 \\
&
\qquad
- 
\sum^{p-2}_{j=1} \sum^{p-1}_{k=j+1} (-1)^{n(j-1) + (q-1)j + qp+k + p+1} ((\gvf \circ_{k+1} \mu) \circ_{j+1} \psi) \bullet_0 t^{j-1} x
 \\
&
\qquad
- 
\sum^{p-1}_{j=1} (-1)^{n(j-1) + qj + qp+ p+1} ((\gvf \circ_{j+1} \mu) \circ_{j+1} \psi) \bullet_0 t^{j-1} x
\\
&= 
- (-1)^p
\sum^{p-2}_{j=1} \sum^{p-2}_{i=j} (-1)^{n(j-1) + q(i-j) + qp+ p+1} ((\gvf \circ_{i+2} \mu) \circ_{i+2} \psi) \bullet_0 t^{j-1} x 
 \\
&
\qquad
- 
\sum^{p-2}_{j=1} \sum^{p-1}_{k=j+1} (-1)^{n(j-1) + q(j +p) +k} ((\gvf \circ_{k+1} \mu) \circ_{j+1} \psi) \bullet_0 t^{j-1} x.
\end{split}
\end{equation*}
\end{footnotesize}
Therefore,
$$
(13) + (16) + (22) + (23) = 0.
$$
This concludes the proof of Eq.~\rmref{panem2}. The identity \rmref{etcircenses} is proven by similar (and similarly tedious) computations, which is why we leave it to the reader. 
\end{proof}

\providecommand{\bysame}{\leavevmode\hbox to3em{\hrulefill}\thinspace}
\providecommand{\MR}{\relax\ifhmode\unskip\space\fi M`R }
\providecommand{\MRhref}[2]{%
  \href{http://www.ams.org/mathscinet-getitem?mr=#1}{#2}}
\providecommand{\href}[2]{#2}

\end{document}